\documentclass[a4paper,12pt]{article}
    \usepackage[top=2cm,bottom=2cm,left=2.5cm,right=2.5cm]{geometry}
    \usepackage{cite, amsmath, amssymb}
    \usepackage[margin=1cm,%
                font=small,%
                format=hang,%
                labelsep=period,%
                labelfont=bf]{caption}
    \pagestyle{empty}
\usepackage[latin1]{inputenc}
\usepackage{amsmath}
\usepackage{amsfonts}
\usepackage{amssymb}
\usepackage{cite}
\usepackage{amsthm}
\usepackage{makeidx}
\usepackage{graphicx}
\usepackage{mathrsfs,xcolor}
\usepackage{enumerate}
\usepackage{blkarray}
\usepackage{bm}
\usepackage{setspace}
\usepackage{float}
\usepackage{authblk}
\usepackage{multirow}
\usepackage{booktabs}
\usepackage[ruled,vlined]{algorithm2e}
\usepackage{blkarray}
\usepackage{authblk}
\usepackage{multirow}

\numberwithin{table}{section}
\numberwithin{equation}{section}

\theoremstyle{plain}
\newtheorem{theorem}{Theorem}[section]
\newtheorem{proposition}[theorem]{Proposition}
\newtheorem{definition}[theorem]{Definition}

\newtheorem{example}[theorem]{Example}

\newtheorem{corollary}[theorem]{Corollary}
\newtheorem{remark}[theorem]{Remark}

\usepackage{authblk}
\author[1,2]{ \textbf{Bryan S. Hernandez}}
\author[3,4,5,6]{\textbf{Eduardo R. Mendoza}}

\affil[1]{\small \textit{Institute of Mathematics, University of the Philippines Diliman, Quezon City 1101, Philippines}}
\affil[2]{\small \textit{Biomedical Mathematics Group, Pioneer Research Center for Mathematical and Computational Sciences, Institute for Basic Science, Daejeon 34126, Republic of Korea}}
\affil[3]{\small \textit{Mathematics and Statistics Department, De La Salle University, Manila  0922, Philippines}}
\affil[4]{\small \textit{Center for Natural Sciences and Environmental Research, De La Salle University, Manila 0922, Philippines}}
\affil[5]{\small \textit{Max Planck Institute of Biochemistry, Martinsried, Munich, Germany}}
\affil[6]{\small \textit{LMU Faculty of Physics, Geschwister -Scholl- Platz 1, 80539 Munich, Germany}}
\affil[*]{Email addresses: \texttt{bryan.hernandez@upd.edu.ph} and \texttt{eduardo.mendoza@dlsu.edu.ph}}

\title{\textbf{Weakly Reversible CF-Decompositions of Chemical Kinetic Systems}}
\date{}

\begin{document}
\maketitle
\begin{abstract} 
This paper studies chemical kinetic systems which decompose into weakly reversible complex factorizable (CF) systems. Among power law kinetic systems, CF systems (denoted as PL-RDK systems) are those where branching reactions of a reactant complex have identical rows in the kinetic order matrix. Mass action and generalized mass action systems (GMAS) are well-known examples. Schmitz's global carbon cycle model is a previously studied non-complex factorizable (NF) power law system (denoted as PL-NDK). We derive novel conditions for the existence of weakly reversible CF-decompositions and present an algorithm for verifying these conditions. We discuss methods for identifying independent decompositions, i.e., those where the stoichiometric subspaces of the subnetworks form a direct sum, as such decompositions relate positive equilibria sets of the subnetworks to that of the whole network. We then use the results to determine the positive equilibria sets of PL-NDK systems which admit an independent weakly reversible decomposition into PL-RDK systems of PLP type, i.e., the positive equilibria are log-parametrized, which is a broad generalization of a Deficiency Zero Theorem of Fortun et al. (2019).\\ \\
	{\bf{Keywords:}} {chemical kinetic systems, chemical reaction networks, non-complex factorizable systems, complex factorizable systems, weakly reversible decompositions, independent decompositions, power law, carbon cycle model}
	
\end{abstract}

\thispagestyle{empty}
\section{Introduction}
\label{sec:intro}

Studies of chemical reaction networks and kinetic systems have hitherto focused on complex factorizable (CF) systems, in particular, on mass action systems and the ``generalized mass action systems'' (GMAS) of M\"uller and Regensburger \cite{MURE2014}. CF systems are defined by the property that at each reactant complex, all branching reactions have the same interaction function. In a mass action system, the interaction function is determined by the reactant's stoichiometric coefficients while in a GMAS, it is given by the kinetic complex, i.e., the image of the kinetic map. CF systems, where underlying network with $\mathscr{S}$, $\mathscr{C}$, and $\mathscr{R}$ as sets of species, complexes, and reactions, are precisely those which have a factorization of its species formation rate function (i.e., its vector field) 
$f\left( x \right) = Y{A_k}{\Psi _K}(x)$
where $Y: \mathbb{R}^\mathscr{C} \to \mathbb{R}^\mathscr{S}$ is map given by the matrix of complexes, $A_k: \mathbb{R}^\mathscr{C} \to \mathbb{R}^\mathscr{C}$  is a Laplacian, and $\Psi _K: \Omega \to \mathbb{R}^\mathscr{C}$ is the kinetics' ``factor map'' on its definition domain in $\mathbb{R}^\mathscr{C}_{\ge 0}$. The subset of CF power law systems is denoted by PL-RDK (power law with reactant-determined kinetics) and can be viewed as a subset of GMAS \cite{TAM2018}.

Much less attention has been paid to the complementary set of non-complex factorizable (NF) systems, though Arceo et al. \cite{arceo2015,AJLM2017} pointed out that representations as chemical reaction networks with power law kinetics of real-world biochemical systems were NF systems. They also identified two classes of kinetic systems, which frequently had NF members, span surjective kinetics \cite{AJLM2017} and RKS (reactant-determined kinetic subspace) kinetics \cite{AJLM2018}. Furthermore, Fortun et al. \cite{fortun2} showed that the power law  kinetic system of Schmitz's model of the earth's pre-industrial carbon cycle model was a weakly reversible, deficiency zero NF system. They also discovered that this PL-NDK system (PL-NDK system = power law NF system) has a decomposition into weakly reversible PL-RDK systems and used this relationship to derive a Deficiency Zero Theorem for a class of NF systems.

A general approach to the study of NF systems was provided in Nazareno et al. \cite{NEML2019} through the family of CF-RM (Complex Factorization by Reactant Multiples) transformations. These techniques partitioned each reactant's set of branching reactions into CF-subsets, i.e., subsets with the same interaction function, and added a minimum number of new reactants to construct a dynamically equivalent CF system with the same number of reactions, identical stoichiometric subspace and the same kinetics. In the paper, the CF-RM transformations were used to derive a theorem for the coincidence of the kinetic and stoichiometric subspaces for NF systems as well as a general computational solution to the linear conjugacy problem for chemical kinetic systems. Hernandez et al. \cite{HMR2020a,HMR2020b} used the CF-RM techniques to provide a computational approach to multistationarity in power law kinetic systems and elucidate the connections to the fundamental decompositions of the underlying network. (Recall that a decomposition, as introduced by M. Feinberg in 1987,  express a network as the union of subnetworks defined by a partition of the network's reaction set. A review of decomposition theory is provided in Section \ref{sec:decomposition}). Magpantay et al. \cite{MHRM2020} combined the previous methods with the STAR-MSC transformation to address multistationarity in poly-PL systems, i.e., sums of power law systems in both CF and NF systems.

A weakness of the CF-RM approach, which is primarily dynamic invariance-oriented, is that the addition of new reactant complexes and (possibly) product complexes leads to considerable changes in connectivity, e.g., in the incidence matrix. In particular, the important property of weak reversibility is very rarely preserved. Given that most results related to equilibria of chemical systems, even in the CF case, assume weak reversibility, this non-invariance phenomenon seriously restricts the utility of the CF-RM techniques.

In this paper, we build on the work of Fortun et al. \cite{fortun2} and develop a general approach for the analysis of weakly reversible NF systems with a weakly reversible decomposition into CF subsystems. This approach is a good complement to the CF-RM techniques for this subclass of NF systems. As our main results in this paper, we provide
\begin{enumerate}
	\item a characterization of weakly reversible NF systems with a weakly reversible CF-decomposition, which also provides necessary conditions for the existence of independent or incidence independent weakly reversible CF-decompositions for the NF system,
	\item an algorithm for determining the weakly reversible CF-decompositions of an NF system (if they exist),
	\item method for determining the independent, weakly reversible CF-decompositions of the NF system (if they exist),
	\item the construction of the CRN of kinetic complexes of a weakly reversible power law kinetic system as the framework for analyzing subnetworks induced by weakly reversible PL- RDK decompositions,
	\item an extension of the ``Deficiency Zero Theorem'' of Fortun et al., i.e., the equilibria existence and parametrization statements, to weakly reversible PL-NDK systems with special PL-RDK decompositions and positive deficiency.
\end{enumerate}
The last result emphasizes that the main structural property in the result of Fortun et al. ensuring the existence and parametrization of equilibria is the availability of the special PL-RDK decomposition rather than the zero deficiency itself. We use the PL-NDK system of Schmitz's carbon cycle model to illustrate our results.

Our paper is organized as follows: Section \ref{sec:prelim} collects the basic concepts and results on chemical reaction networks and kinetic systems needed in the later sections. In Section \ref{sec:decomposition}, a brief review of decomposition theory is provided and Schmitz's global carbon cycle model is introduced. Section \ref{sec:wr:decom} discusses the necessary and sufficient condition that characterizes the existence of weakly reversible CF-decompositions for an NF system. In Section \ref{sec:algo:wr}, an algorithm for determining weakly reversible CF-decompositions (should they exist) is presented. Methods for determining the independent decompositions among the weakly reversible CF-decompositions are discussed in Section \ref{sec:construct:independent}. In Section \ref{sec:existence:parametrization}, the reaction network of kinetic complexes of a weakly reversible power law system is used as a framework for identifying subnetworks induced by its weakly reversible CF-decompositions. This discussion also provides the setting for the extension of the result of Fortun et al.  Summary, conclusions and outlook comprise Section \ref{sec:sum}.

\section{Fundamentals of chemical reaction networks and kinetic systems}
\label{sec:prelim}

\indent We provide important concepts of chemical reaction networks and chemical kinetic systems, which are essential for this paper \cite{arceo2015,feinberg,feinberg1}.

\subsection{Fundamentals of chemical reaction networks}

\begin{definition}
	A {\bf chemical reaction network} (CRN) $\mathscr{N}$ is a triple $\left(\mathscr{S},\mathscr{C},\mathscr{R}\right)$ of nonempty finite sets $\mathscr{S}$, $\mathscr{C} \subseteq \mathbb{R}_{\ge 0}^\mathscr{S}$, and $\mathscr{R} \subset \mathscr{C} \times \mathscr{C}$, of $m$ species, $n$ complexes, and $r$ reactions, respectively, satisfying the following properties: (i.) $\left( {{C_i},{C_i}} \right) \notin \mathscr{R}$ for each $C_i \in \mathscr{C}$, and
	(ii.) for each $C_i \in \mathscr{C}$, there exists $C_j \in \mathscr{C}$ such that $\left( {{C_i},{C_j}} \right) \in \mathscr{R}$ or $\left( {{C_j},{C_i}} \right) \in \mathscr{R}$.
	
\end{definition}

\begin{definition}
	The {\bf molecularity matrix} $Y$ is an $m\times n$ matrix such that $Y_{ij}$ is the stoichiometric coefficient of species $X_i$ in complex $C_j$.
	The {\bf incidence matrix} $I_a$ is an $n\times r$ matrix where 
	$${\left( {{I_a}} \right)_{ij}} = \left\{ \begin{array}{rl}
		- 1&{\rm{ if \ }}{C_i}{\rm{ \ is \ in \ the\ reactant \ complex \ of \ reaction \ }}{R_j},\\
		1&{\rm{  if \ }}{C_i}{\rm{ \ is \ in \ the\ product \ complex \ of \ reaction \ }}{R_j},\\
		0&{\rm{    otherwise}}.
	\end{array} \right.$$
	The {\bf stoichiometric matrix} $N$ is the $m\times r$ matrix given by 
	$N=YI_a$.
\end{definition}

\begin{definition}
	The {\bf reaction vectors} for a given reaction network $\left(\mathscr{S},\mathscr{C},\mathscr{R}\right)$ are the elements of the set $\left\{{C_j} - {C_i} \in \mathbb{R}^m|\left( {{C_i},{C_j}} \right) \in \mathscr{R}\right\}.$
\end{definition}

\begin{definition}
	The {\bf stoichiometric subspace} of a reaction network $\left(\mathscr{S},\mathscr{C},\mathscr{R}\right)$, is given by $$S = {\rm{span}}\left\{ {{C_j} - {C_i} \in \mathbb{R}^m|\left( {{C_i},{C_j}} \right) \in \mathscr{R}} \right\}.$$ The {\bf rank} of the network is given by $s=\dim S$. The set $\left( {x + S} \right) \cap \mathbb{R}_{ \ge 0}^m$ is said to be a {\bf stoichiometric compatibility class} of $x \in \mathbb{R}_{ \ge 0}^m$.
	Two vectors $x, x^{*} \in {\mathbb{R}^m}$ are {\bf stoichiometrically compatible} if $x-x^{*}$ is an element of the stoichiometric subspace $S$.
\end{definition}


\begin{definition}
	Let $\mathscr{N} = \left(\mathscr{S},\mathscr{C},\mathscr{R}\right)$ be a CRN. The {\bf{map of complexes}} $Y:{\mathbb{R}^n} \to \mathbb{R}_{ \ge 0}^m$
	maps the basis vector ${\omega _y}$ to the complex $y \in \mathscr{C}$. The {\bf{incidence map}} ${I_a}:{\mathbb{R}^r} \to {\mathbb{R}^n}$
	is defined by mapping for each reaction $R_i : y \to y' \in \mathscr{R}$, the basis vector $\omega_i$ to the vector ${\omega _{y'}} - {\omega _y} \in \mathscr{C}$. The {\bf{stoichiometric map}} $N : \mathbb{R}^r \to \mathbb{R}^m$ is defined as $N=Y \circ I_a$.
\end{definition}

The {\bf{reactant map}} $\rho: \mathscr{R} \to \mathscr{C}$ associates a reaction $y \to y'$ to its reactant $y$. Denote by $n_r$ the number of reactant complexes. The map is surjective if and only if $n_r=n$. In this case, the network is {\bf cycle terminal}. It is injective if and only if $n_r=r$. In this case, the network is {\bf non-branching}.
The reactant map $\rho$ induces a further useful mapping called {\bf{reactions map}}. The
reactions map $\rho'$ associates the coordinate of reactant complexes to the coordinates of all
reactions of which the complex is a reactant, i.e.,
$\rho' : \mathbb{R}^n \to \mathbb{R}^r$ such that $f : \mathscr{C} \to \mathbb{R}$ mapped to $f \circ \rho$.

We associate the following important linear maps to a positive element $k \in \mathbb{R}^r$: the {\bf{$k$-diagonal map}} ${\rm{diag}}(k)$ that maps $\omega_i$ to $k_i\omega_i$ for $i \in \mathscr{R}$, the {\bf{$k$-incidence map}} $I_k$, which is defined as the composition ${\rm{diag}}(k) \circ \rho'$, and the {\bf{Laplacian map}} $A_k : \mathbb{R}^n \to \mathbb{R}^n$, which is given by $A_k = I_a \circ I_k$.

From the classic graph theory, a {\bf{directed graph}} or simply {\bf{digraph}}, denoted by $D$, consists of a non-empty finite set of vertices and a finite set ordered pairs called arcs. For an arc $(u, v)$, the first vertex $u$ is its tail and the second vertex $v$ is its head \cite{BG2009}.

\begin{definition}
	Let $D$ be a digraph. A {\bf{walk}} $W$ in $D$ is an alternating sequence of vertices $x_i$ and arcs $a_j$ from $D$ given by $$x_1a_1x_2a_2x_3,\ldots, x_{k-1}a_{k-1}x_k$$
	such that the tail of $a_i$ is $x_i$ and the head
	of $a_i$ is $x_{i+1}$ for each $i =1,2,\ldots, k - 1$. A walk is closed if $x_1 = x_k$. Otherwise, it is open. 
\end{definition}

A trail is a walk such that all arcs are distinct. If the vertices of walk $W$ are distinct, $W$ is a called {\bf{path}}. If the vertices $x_1, x_2, \ldots, x_{k-1}$ are distinct with $k \ge 3$ and $x_1 = x_k$, the walk is called a {\bf{cycle}} \cite{BG2009}.

For this paper, we consider ``cycles'' to be simple cycles, i.e., no repeated vertices and no repeated arcs. 

Chemical reaction networks are digraphs where complexes are vertices and reactions are arcs. If there is a path between two vertices $C_i$ and $C_j$, then they are said to be {\bf connected}. If there is a directed path from vertex $C_i$ to vertex $C_j$ and vice versa, then they are said to be {\bf strongly connected}. If any two vertices of a subgraph are {\bf (strongly) connected}, then the subgraph is said to be a {\bf (strongly) connected component}. The (strong) connected components are precisely the {\bf (strong) linkage classes} of a chemical reaction network. The maximal strongly connected subgraphs where there are no arcs from a complex in the subgraph to a complex outside the subgraph is said to be the {\bf terminal strong linkage classes}.
We denote the number of linkage classes, the number of strong linkage classes, and the number of terminal strong linkage classes by $l, sl,$ and $t$, respectively.
A chemical reaction network is said to be {\bf weakly reversible} if $sl=l$, i.e., each linkage class is strongly connected, and it is said to be {\bf $t$-minimal} if $t=l$.

\begin{definition}
	The {\bf deficiency} of a CRN is $\delta=n-l-s$ where $n$ is the number of complexes, $l$ is the number of linkage classes, and $s$ is the rank of the network.
\end{definition}

\subsection{Fundamentals of chemical kinetic systems}

\begin{definition}
	A {\bf kinetics} $K$ for a reaction network $\mathscr{N}$ is an assignment to each reaction $j: y \to y' \in \mathscr{R}$ of a rate function ${K_j}:{\Omega _K} \to {\mathbb{R}_{ \ge 0}}$ such that $\mathbb{R}_{ > 0}^m \subseteq {\Omega _K} \subseteq \mathbb{R}_{ \ge 0}^m$, $c \wedge d \in {\Omega _K}$ if $c,d \in {\Omega _K}$, and ${K_j}\left( c \right) \ge 0$ for each $c \in {\Omega _K}$.
	Furthermore, it satisfies the positivity property: supp $y$ $\subset$ supp $c$ if and only if $K_j(c)>0$.
	The system $\left(\mathscr{N},K\right)$ is called a {\bf chemical kinetic system}.
\end{definition}

\begin{definition}
	The {\bf species formation rate function} (SFRF) of a chemical kinetic system is given by $f\left( x \right) = NK(x)= \displaystyle \sum\limits_{{C_i} \to {C_j} \in \mathscr{R}} {{K_{{C_i} \to {C_j}}}\left( x \right)\left( {{C_j} - {C_i}} \right)}.$
\end{definition}
The ODE or dynamical system of a chemical kinetics system is $\dfrac{{dx}}{{dt}} = f\left( x \right)$. An {\bf equilibrium} or {\bf steady state} is a zero of $f$.

\begin{definition}
	The {\bf set of positive equilibria} of a chemical kinetic system $\left(\mathscr{N},K\right)$ is given by ${E_ + }\left(\mathscr{N},K\right)= \left\{ {x \in \mathbb{R}^m_{>0}|f\left( x \right) = 0} \right\}.$
\end{definition}

A chemical reaction network is said to admit {\bf multiple equilibria} if there exist positive rate constants such that the ODE system admits more than one stoichiometrically compatible equilibria. On the other hand, the {\bf set of complex balanced equilibria} \cite{HornJackson} is given by 
\[{Z_ + }\left(\mathscr{N},K\right) = \left\{ {x \in \mathbb{R}_{ > 0}^m|{I_a} \cdot K\left( x \right) = 0} \right\} \subseteq {E_ + }\left(\mathscr{N},K\right).\]
A positive vector $c \in \mathbb{R}^m$ is complex balanced if $K\left( c \right)$ is contained in $\ker{I_a}$. A chemical kinetic system is complex balanced if it has a complex balanced equilibrium.

\begin{definition}
	A kinetics $K$ is a {\bf power law kinetics} (PLK) if 
	${K_i}\left( x \right) = {k_i}{{x^{{F_{i}}}}} $  $\forall i =1,...,r$ where ${k_i} \in {\mathbb{R}_{ > 0}}$ and ${F_{ij}} \in {\mathbb{R}}$. The power law kinetics is defined by an $r \times m$ matrix $F$, called the {\bf kinetic order matrix} and a vector $k \in \mathbb{R}^r$, called the {\bf rate vector}.
\end{definition}
If the kinetic order values are the corresponding stoichiometric coefficients, then the system has the {\bf mass action kinetics (MAK)}.

\begin{definition}
	A PLK system has {\bf reactant-determined kinetics} (of type PL-RDK) if for any two reactions $i, j$ with identical reactant complexes, the corresponding rows of kinetic orders in $F$ are identical. That is, ${f_{ik}} = {f_{jk}}$ for $k = 1,2,...,m$. A PLK system has {\bf non-reactant-determined kinetics} (of type PL-NDK) if there exist two reactions with the same reactant complexes whose corresponding rows in $F$ are not identical.
\end{definition}

We now consider the {\bf{complex factorizable kinetics (CFK)}} and its complement, the {\bf{non-complex factorizable kinetics (NFK)}} \cite{arceo2015}.

\begin{definition}
	A chemical kinetics $K$ is {\bf{complex factorizable}} (CF) if there is a $k \in \mathbb{R}_{ > 0}^r$ and a mapping ${\Psi _K}:{\Omega _K} \to \mathbb{R}_{ \ge 0}^n$ such that $\mathbb{R}_{ > 0}^m \subseteq {\Omega _K} \subseteq \mathbb{R}_{ \ge 0}^m$ and $K = {I_k} \circ {\Psi _K}$. We call ${\Psi _K}$ a {\bf{factor map}} of $K$.
\end{definition}

We have $K = {\rm{diag}}\left( k \right) \circ \rho ' \circ {\Psi _K}$ via the definition of the $k$-incidence map $I_k$. This implies that a complex factorizable kinetics $K$ has a decomposition into diagonal matrix of
rate constants and an interaction map $\rho ' \circ {\Psi _K}:\mathbb{R}_{ \ge 0}^m \to \mathbb{R}_{ \ge 0}^r$. Note that the values of the interaction map are ``reaction-determined''. It basically means that they are determined by the values on the reactant complexes.
CF kinetics (CFK) generalizes the key structural property of MAK. The complex formation rate function decomposes as $g = {A_k} \circ {\Psi _K}$ while the species formation rate function factorizes as $f = Y \circ {A_k} \circ {\Psi _K}$. The CFK systems in the set of PLK systems are precisely the PL-RDK systems \cite{arceo2015}.


We now formally define the RID (rate constant-interaction map decomposable) kinetics.

\begin{definition}
	A kinetics $K$ is an {\bf{RID kinetics}} if for any $x \in \Omega$ (its domain of definition) given by $$K(x) = {\text{diag}(k)}I_K(x)$$ where $k$ a positive vector in $\mathbb R^\mathscr{R}$ and $I_K(x)$ does not depend on $k$.  
\end{definition}

\section{Decompositions and equilibria of kinetic systems}
\label{sec:decomposition}

In this section, we provide a brief review of decomposition theory with a focus on concepts and results to be used in the following sections. We introduce the power law representation of Schmitz's global carbon cycle model first studied in \cite{fortun2}. We use it and a subnetwork as running examples for our results.

\subsection{A brief review of decomposition theory}
Decomposition theory was initiated by M. Feinberg in 1987 in his review \cite{feinberg12}. He introduced the following definition of a decomposition:

\begin{definition}
	A {\textbf{decomposition}} of a reaction network $\mathscr{N}$ is a set of subnetworks $\{\mathscr{N}_1, \mathscr{N}_2,...,\mathscr{N}_k\}$ of $\mathscr{N}$ induced by a partition $\{\mathscr{R}_1, \mathscr{R}_2,...,\mathscr{R}_k\}$ of its reaction set $\mathscr{R}$. 
\end{definition}

The use of the term ``decomposition'' is not consistent in the CRNT literature as some authors require only that $\mathscr{R} = \cup \mathscr{R}_i$, but not that the subsets are pairwise disjoint. We denote this looser concept as a covering of the network.

\begin{example}
	The most widely used decomposition of a reaction network is the set of linkage classes. Linkage classes have the special property that they not only partition the set of reactions, but also the set of complexes. They are presented in the simpler latter form (e.g., in CRNToolbox \cite{software}) and hence, not consciously treated as decompositions.
\end{example}

\begin{example}
	The ``decomposition'' with a single ``subnetwork'' $\mathscr{N} = \mathscr{N}_1$ shall be referred to as the ``trivial decomposition''.
\end{example}

We denote a decomposition by 
$\mathscr{N} = \mathscr{N}_1 \cup \mathscr{N}_2 \cup ... \cup \mathscr{N}_k$
since $\mathscr{N}$ is a union of the subnetworks in the sense of \cite{ghms2019}. It also follows that, 
$${S} = {S}_1 + {S}_2 + ... + {S}_k$$
for the corresponding stoichiometric subspaces.

M. Feinberg also introduced the important concept of independence of decompositions:

\begin{definition}
	A network decomposition $\mathscr{N} = \mathscr{N}_1 \cup \mathscr{N}_2 \cup ... \cup \mathscr{N}_k$  is {\textbf{independent}} if its stoichiometric subspace is equal to the direct sum of the stoichiometric subspaces of its subnetworks.
\end{definition}

\begin{example}
	The independence of the linkage class decomposition is an essential network property in various theorems for mass action systems, e.g., the Deficiency One Theorem \cite{feinberg1}.
\end{example}

A basic property of an independent decomposition was first recorded in \cite{fortun2}:
\begin{proposition}
	For an independent decomposition, $\delta \le \delta_1 +\delta_2 ... +\delta_k$.
\end{proposition}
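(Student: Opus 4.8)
The plan is to reduce the claimed inequality to a purely combinatorial statement about incidence maps and then invoke subadditivity of the dimension of a sum of subspaces. Writing out the deficiencies, $\delta = n - l - s$ and $\delta_i = n_i - l_i - s_i$, the asserted inequality $\delta \le \sum_{i=1}^{k}\delta_i$ is equivalent to
\[ n - l - s \;\le\; \sum_{i=1}^{k}(n_i - l_i) - \sum_{i=1}^{k} s_i. \]
This is the only place where independence is used: since the decomposition is independent, $S = S_1 \oplus \cdots \oplus S_k$, so $s = \sum_{i} s_i$ and the $s$-terms cancel. Hence it suffices to prove the combinatorial inequality $n - l \le \sum_{i=1}^{k}(n_i - l_i)$ relating the complex and linkage-class counts.

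First I would recall the standard fact that for any network the rank of the incidence map equals the number of complexes minus the number of linkage classes, i.e. $\mathrm{rank}(I_a) = n - l$ (the linkage classes being exactly the connected components of the reaction digraph), and likewise $\mathrm{rank}\big(I_a^{(i)}\big) = n_i - l_i$ for each subnetwork $\mathscr{N}_i$, where $I_a^{(i)}$ denotes its incidence map. The next step is to view every subnetwork incidence map inside the common complex space $\mathbb{R}^n$: since $\mathscr{C}_i \subseteq \mathscr{C}$, the inclusion $\mathbb{R}^{n_i} \hookrightarrow \mathbb{R}^n$ (extension by zero on the absent complexes) is injective and carries $\mathrm{im}\big(I_a^{(i)}\big)$ onto the span of the reaction vectors $\omega_{y'} - \omega_y$ for $R : y \to y' \in \mathscr{R}_i$, preserving its dimension. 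Because the reaction set is partitioned as $\mathscr{R} = \mathscr{R}_1 \cup \cdots \cup \mathscr{R}_k$, the columns of $I_a$ are exactly the union of the embedded columns of the $I_a^{(i)}$, so
\[ \mathrm{im}(I_a) \;=\; \sum_{i=1}^{k} \mathrm{im}\big(I_a^{(i)}\big) \quad\text{as subspaces of } \mathbb{R}^n. \]

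Finally I would apply subadditivity of dimension, $\dim(V_1 + \cdots + V_k) \le \sum_i \dim V_i$, to conclude
\[ n - l \;=\; \dim \mathrm{im}(I_a) \;=\; \dim\!\Big(\sum_{i=1}^{k} \mathrm{im}\big(I_a^{(i)}\big)\Big) \;\le\; \sum_{i=1}^{k} \dim\,\mathrm{im}\big(I_a^{(i)}\big) \;=\; \sum_{i=1}^{k} (n_i - l_i), \]
which combined with the first paragraph yields $\delta \le \sum_{i=1}^{k}\delta_i$. The main point requiring care is the bookkeeping in the embedding step: complexes may be shared among several subnetworks, so $\sum_i n_i \ge n$ in general, and one must check that identifying the subnetwork complex spaces with subspaces of $\mathbb{R}^n$ neither alters the ranks $n_i - l_i$ nor double-counts the reaction columns. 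I expect this to be the only genuine obstacle; note that the gap $\sum_i \delta_i - \delta = \sum_i \dim\,\mathrm{im}\big(I_a^{(i)}\big) - \dim\sum_i \mathrm{im}\big(I_a^{(i)}\big)$ is precisely the failure of the subnetwork incidence images to form a direct sum, which also indicates when equality holds.
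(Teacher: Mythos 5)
Your proof is correct and complete. Note that the paper itself does not prove this proposition at all: it records it as a ``basic property'' and attributes it to Fortun et al.\ (2019), so there is no in-paper argument to compare against. Your route is the standard one (and essentially the one in the cited source): independence gives $s=\sum_i s_i$, so the claim reduces to $n-l\le\sum_i(n_i-l_i)$, which follows from $\mathrm{rank}(I_a)=n-l$, the identity $\mathrm{im}(I_a)=\sum_i \mathrm{im}\bigl(I_a^{(i)}\bigr)$ (valid because $\mathscr{R}=\mathscr{R}_1\cup\cdots\cup\mathscr{R}_k$ and the zero-extension embeddings $\mathbb{R}^{\mathscr{C}_i}\hookrightarrow\mathbb{R}^{\mathscr{C}}$ preserve the ranks), and subadditivity of dimension. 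Your bookkeeping concern about shared complexes is handled exactly by that embedding argument, so there is no gap. Your closing observation is also consistent with the paper: equality in the combinatorial inequality $n-l\le\sum_i(n_i-l_i)$ is precisely the paper's notion of incidence independence, which is why the paper's later propositions state that an independent decomposition satisfies $\delta=\delta_1+\cdots+\delta_k$ exactly when it is bi-independent, and why the reverse inequality $\delta\ge\sum_i\delta_i$ holds for incidence independent decompositions.
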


Decompositions can have the following useful relation:

\begin{definition}
	Let $\mathscr{D}: \mathscr{N}  = \mathscr{N}_1 \cup \ldots \cup \mathscr{N}_k$
	and
	$\mathscr{D}': \mathscr{N}  = \mathscr{N}_1' \cup \ldots \cup \mathscr{N}_k'$
	be decompositions induced by $\{\mathscr{R}_1,\ldots,\mathscr{R}_k\}$ and $\{\mathscr{R}_1',\ldots,\mathscr{R}_k'\}$, respectively.
	$\mathscr{D}$ is a {\bf refinement} of $\mathscr{D}'$ (and $\mathscr{D}'$ a {\bf{coarsening}} of $\mathscr{D}$) if each $\mathscr{R}_i$ is contained in an $\mathscr{R}_j'$.  One also say that $\mathscr{D}$ is finer than $\mathscr{D}'$ (or $\mathscr{D}'$ coarser than $\mathscr{D}$). 
\end{definition}

Farinas et al. \cite{FAML2020} noted the following relationship:

\begin{proposition}
	If a decomposition is independent, then any coarsening of the decomposition is independent.
	\label{prop:inde:coarse}
\end{proposition}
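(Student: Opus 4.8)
The plan is to reduce independence to a dimension-counting condition and then exploit the fact that every subcollection of an independent family of subspaces is itself independent. First I would recall the standard linear-algebra characterization: for subspaces $V_1,\ldots,V_p$ one always has $V_i \subseteq \sum_i V_i$ and $\dim\left(\sum_i V_i\right) \le \sum_i \dim V_i$, with equality if and only if the sum is direct, i.e. $\sum_i V_i = \bigoplus_i V_i$. Applying this to the reaction-network setting, where $S = S_1 + \cdots + S_k$ always holds, the independence of $\mathscr{D}: \mathscr{N} = \mathscr{N}_1 \cup \cdots \cup \mathscr{N}_k$ is equivalent to the single numerical identity $\dim S = \sum_{i=1}^k \dim S_i$.

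Next I would unpack the structure of a coarsening. Let $\mathscr{D}': \mathscr{N} = \mathscr{N}_1' \cup \cdots \cup \mathscr{N}_m'$ be induced by a partition $\{\mathscr{R}_1',\ldots,\mathscr{R}_m'\}$ that coarsens $\{\mathscr{R}_1,\ldots,\mathscr{R}_k\}$. By the definition of refinement, each finer block $\mathscr{R}_i$ lies in exactly one $\mathscr{R}_j'$, so each $\mathscr{R}_j'$ is the disjoint union of a subcollection $\{\mathscr{R}_i : i \in I_j\}$, where $\{I_1,\ldots,I_m\}$ is a partition of $\{1,\ldots,k\}$. Since the stoichiometric subspace of a subnetwork is the span of the reaction vectors of its reactions, this block structure passes directly to the subspaces: $S_j' = \sum_{i \in I_j} S_i$ for each $j$.

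The key step is that because the whole family $\{S_1,\ldots,S_k\}$ is independent, each subfamily $\{S_i : i \in I_j\}$ is also independent, whence $\dim S_j' = \sum_{i \in I_j} \dim S_i$. Summing over $j$ and using that the $I_j$ partition $\{1,\ldots,k\}$ gives $\sum_{j=1}^m \dim S_j' = \sum_{i=1}^k \dim S_i = \dim S$, where the last equality is the independence of $\mathscr{D}$. Since $S = \sum_{j} S_j'$ always holds, the dimension criterion now forces $S = \bigoplus_j S_j'$, so $\mathscr{D}'$ is independent. The only point needing a word of justification is the elementary fact that a subfamily of an independent family of subspaces is again independent; this follows immediately from the dimension criterion, for if some subfamily failed to attain its upper bound $\sum_{i \in I_j}\dim S_i$, then summing over $j$ would make $\sum_{i=1}^k \dim S_i$ strictly exceed $\dim S$, contradicting the independence of $\mathscr{D}$. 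I anticipate no genuine obstacle here, the argument being essentially bookkeeping with dimensions; the only care required is in stating the subfamily-independence lemma cleanly.
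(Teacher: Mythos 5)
Your proof is correct. Note that the paper itself offers no proof of this proposition --- it is quoted from Farinas et al.~\cite{FAML2020} --- so there is no in-paper argument to compare against; your dimension-counting route is the standard one and is complete. In particular, the two points needing care are handled properly: the identification $S_j' = \sum_{i \in I_j} S_i$ follows since each coarse block of reactions is the disjoint union of fine blocks and stoichiometric subspaces are spans of reaction vectors, and the subfamily-independence lemma is correctly dispatched by the sandwich $\dim S \le \sum_{j} \dim S_j' \le \sum_{j}\sum_{i \in I_j} \dim S_i = \sum_{i=1}^{k} \dim S_i = \dim S$, which forces equality throughout and hence directness of $S = \sum_j S_j'$.
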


M. Feinberg in \cite{feinberg12} demonstrated the importance of the independence concept through the following Theorem:

\begin{theorem} (Feinberg Decomposition Theorem \cite{feinberg12})
	\label{feinberg:decom:thm}
	Let $P(\mathscr{R})=\{\mathscr{R}_1, \mathscr{R}_2,...,\mathscr{R}_k\}$ be a partition of a CRN $\mathscr{N}$ and let $K$ be a kinetics on $\mathscr{N}$. If $\mathscr{N} = \mathscr{N}_1 \cup \mathscr{N}_2 \cup ... \cup \mathscr{N}_k$ is the network decomposition of $P(\mathscr{R})$ and $${E_ + }\left(\mathscr{N}_i,{K}_i\right)= \left\{ {x \in \mathbb{R}^\mathscr{S}_{>0}|N_iK_i(x) = 0} \right\}$$ then
	\[{E_ + }\left(\mathscr{N}_1,K_1\right) \cap {E_ + }\left(\mathscr{N}_2,K_2\right) \cap ... \cap {E_ + }\left(\mathscr{N}_k,K_k\right) \subseteq  {E_ + }\left(\mathscr{N},K\right).\]
	If the network decomposition is independent, then equality holds.
\end{theorem}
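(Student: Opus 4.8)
The plan is to exploit the fact that partitioning the reaction set makes the species formation rate function additive over the subnetworks. First I would record that, since $\{\mathscr{R}_1,\ldots,\mathscr{R}_k\}$ is a partition of $\mathscr{R}$, the defining sum for $f$ splits cleanly:
\[
f(x)=\sum_{C_i \to C_j \in \mathscr{R}} K_{C_i \to C_j}(x)\,(C_j-C_i)
=\sum_{p=1}^{k}\Big(\sum_{C_i \to C_j \in \mathscr{R}_p} K_{C_i \to C_j}(x)\,(C_j-C_i)\Big)
=\sum_{p=1}^{k} N_p K_p(x).
\]
Writing $f_p:=N_pK_p$ for the SFRF of the subnetwork $(\mathscr{N}_p,K_p)$, this says $f=\sum_{p=1}^k f_p$. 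The restriction $K_p$ is well defined because a kinetics assigns a rate function to each reaction individually, so the positivity property is inherited by each subsystem.

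The asserted inclusion is then immediate. If $x\in\bigcap_{p} E_+(\mathscr{N}_p,K_p)$, then $f_p(x)=0$ for every $p$, whence $f(x)=\sum_{p} f_p(x)=0$; since $x$ is positive, $x\in E_+(\mathscr{N},K)$. This direction uses only the additive splitting and requires no hypothesis on the decomposition.

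For equality under independence I would take $x\in E_+(\mathscr{N},K)$ and show it belongs to each $E_+(\mathscr{N}_p,K_p)$. The decisive observation is that $f_p(x)$ is a linear combination of the reaction vectors of $\mathscr{N}_p$, hence $f_p(x)\in S_p$. Independence means $S=S_1\oplus\cdots\oplus S_k$, i.e. every element of $S$ has a \emph{unique} representation as a sum of elements of the $S_p$. Now $0=f(x)=\sum_{p} f_p(x)$ exhibits $0$ as such a sum with $f_p(x)\in S_p$, while $0=\sum_{p} 0$ is another; uniqueness of the representation forces $f_p(x)=0$ for every $p$. Therefore $x\in E_+(\mathscr{N}_p,K_p)$ for each $p$, which yields the reverse inclusion and hence equality.

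The routine parts are the additive splitting and the easy inclusion; the step carrying the real content is identifying $f_p(x)\in S_p$ and invoking uniqueness of representation in a direct sum. The one subtlety worth flagging is that directness is genuinely needed: if the sum $S=S_1+\cdots+S_k$ were not direct, a nonzero tuple $(f_1(x),\ldots,f_k(x))$ of subnetwork rates could cancel to zero, so a common equilibrium of $\mathscr{N}$ need not be an equilibrium of each $\mathscr{N}_p$. The direct-sum condition is exactly what excludes this, which is why it appears as the hypothesis for equality.
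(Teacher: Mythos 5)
Your proof is correct. Note that the paper does not actually prove this statement---it is quoted as a known result from Feinberg's 1987 paper \cite{feinberg12}---so there is no in-paper argument to compare against; but your argument is precisely the classical one: additivity $f=\sum_p f_p$ over the partition of $\mathscr{R}$ gives the inclusion unconditionally, and for the converse you correctly identify the key point, namely $f_p(x)\in S_p$ together with the uniqueness of the representation of $0$ in the direct sum $S_1\oplus\cdots\oplus S_k$, which forces each $f_p(x)=0$. Your closing remark about why directness (and not merely $S=S_1+\cdots+S_k$, which holds for every decomposition) is the operative hypothesis is exactly the right observation.
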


Note however that independence does not ensure that the intersection is non-empty though there is a positive equilibrium for each subnetwork. Additional properties of the decomposition and the kinetics are necessary to achieve this. To our knowledge, there are only a few such results.

\begin{example}
	For a network $\mathscr{N}$ with an independent linkage class decomposition and mass action kinetics $K$, $E_+(\mathscr{L}_i, K_i) \ne \varnothing$ $\implies$ $E_+(\mathscr{N}, K) \ne \varnothing$ (s. \cite{BORO2013}).
\end{example}

The corresponding concepts and results for complex balanced equilibria were established only recently in \cite{FAML2020}:

\begin{definition}
	A network decomposition $\mathscr{N} = \mathscr{N}_1 \cup \mathscr{N}_2 \cup ... \cup \mathscr{N}_k$ is {\textbf{incidence independent}} if the incidence map $I_a$ of $\mathscr{N}$ is equal to the direct sum of the incidence maps of its subnetworks.
\end{definition}

Clearly, incidence independence is equivalent to  
$$n-\ell = \displaystyle \sum (n_i - l_i).$$
Incidence independence has the following analogous properties to independence:

\begin{proposition} The following statements hold:
	\begin{enumerate}
		\item For an incidence independent decomposition, $\delta \ge \delta_1 +\delta_2 ... +\delta_k$. 
		\item Any coarsening of an incidence independent decomposition is incidence independent.
	\end{enumerate}
	\label{farinas:incidence:independent}
\end{proposition}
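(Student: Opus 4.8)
The plan is to reduce both statements to the dimension characterization of incidence independence already recorded in the excerpt, namely $n-\ell = \sum_{i}(n_i - l_i)$, combined with two elementary facts: that the rank of an incidence matrix equals (number of complexes) minus (number of linkage classes), and that $S = S_1 + \cdots + S_k$. Throughout I would write $\delta = n - l - s$ and $\delta_i = n_i - l_i - s_i$, with $s = \dim S$ and $s_i = \dim S_i$. Both parts mirror the proofs of the corresponding statements for ordinary independence (the inequality $\delta \le \delta_1 + \cdots + \delta_k$ and Proposition~\ref{prop:inde:coarse}), with the roles of the incidence-rank sum and the stoichiometric-rank sum interchanged.

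For statement (1), I would first sum the subnetwork deficiencies,
\begin{equation*}
\sum_{i=1}^k \delta_i = \sum_{i=1}^k (n_i - l_i) - \sum_{i=1}^k s_i ,
\end{equation*}
and then use incidence independence to replace $\sum_i (n_i - l_i)$ by $n-l$, which gives
\begin{equation*}
\delta - \sum_{i=1}^k \delta_i = (n - l - s) - \Big( (n-l) - \sum_{i=1}^k s_i \Big) = \sum_{i=1}^k s_i - s .
\end{equation*}
Since $S = S_1 + \cdots + S_k$, the dimension of a sum of subspaces is at most the sum of the dimensions, so $s \le \sum_i s_i$ and hence $\delta \ge \sum_i \delta_i$. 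Note the contrast with the independence case, where $S$ is a \emph{direct} sum, forcing $s = \sum_i s_i$ and the reverse inequality on the deficiencies.

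For statement (2), I would read the definition of incidence independence through the incidence map: it says exactly that the image of $I_a$ in the complex space $\mathbb{R}^n$ is the internal direct sum $\bigoplus_i \operatorname{im} I_a^{(i)}$ of the images of the subnetwork incidence maps. If $\mathscr{D}'$ is a coarsening, each block $\mathscr{R}_j'$ is a union of blocks $\mathscr{R}_i$ of $\mathscr{D}$, and the incidence map of the coarse subnetwork $\mathscr{N}_j'$ has image $\sum_{i:\, \mathscr{R}_i \subseteq \mathscr{R}_j'} \operatorname{im} I_a^{(i)}$. Grouping the summands of a direct sum into disjoint blocks again yields a direct sum, so $\operatorname{im} I_a = \bigoplus_j \operatorname{im} I_a^{\prime(j)}$, which is precisely incidence independence of $\mathscr{D}'$. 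Equivalently, one can argue at the level of dimensions: the general inequality $n - l \le \sum_j (n_j' - l_j')$, valid for any decomposition because $\operatorname{rank} I_a = n-l$ and the column space of $I_a$ is the sum of the column spaces of the blocks, together with the block-wise bound $n_j' - l_j' \le \sum_{i \in \text{block } j}(n_i - l_i)$ and the equality $n - l = \sum_i (n_i - l_i)$, forces $n - l = \sum_j (n_j' - l_j')$.

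The computations are routine, and I do not anticipate a genuinely hard step; the only points requiring care are the directions of the inequalities in (1) — where incidence independence supplies equality for the incidence-rank sum but only $s \le \sum_i s_i$ survives for the stoichiometric ranks — and, in (2), the observation that coarsening amounts to regrouping the direct summands of $\operatorname{im} I_a$, for which the alternative dimension argument gives a self-contained check. In short, the result is the incidence-theoretic mirror of the independence statements it accompanies.
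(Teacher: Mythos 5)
Your proof is correct. Note that the paper itself gives no proof of this proposition: it is imported from Farinas et al.~\cite{FAML2020}, and the paper only records the dimension characterization that incidence independence is equivalent to $n-\ell=\sum_i(n_i-l_i)$. Your argument is precisely the natural development of that characterization: for part (1), replacing $\sum_i(n_i-l_i)$ by $n-\ell$ and using the subadditivity $s\le\sum_i s_i$ (coming from $S=S_1+\cdots+S_k$ being a sum but not necessarily direct) gives $\delta-\sum_i\delta_i=\sum_i s_i-s\ge 0$, with the inequality correctly reversed relative to the independence case; for part (2), both your regrouping-of-direct-summands argument and your alternative squeeze of dimension inequalities, $n-\ell\le\sum_j(n_j'-l_j')\le\sum_i(n_i-l_i)=n-\ell$, are sound and standard.
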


An important result in \cite{FAML2020} is the analogue of Feinberg´s Theorem:

\begin{theorem} (Theorem 4 \cite{FAML2020})
	\label{decomposition:thm:2}
	Let $\mathscr{N}=(\mathscr{S},\mathscr{C},\mathscr{R})$ be a a CRN and $\mathscr{N}_i=(\mathscr{S}_i,\mathscr{C}_i,\mathscr{R}_i)$ for $i = 1,2,...,k$ be the subnetworks of a decomposition.
	Let $K$ be any kinetics, and $Z_+(\mathscr{N},K)$ and $Z_+(\mathscr{N}_i,K_i)$ be the set of complex balanced equilibria of $\mathscr{N}$ and $\mathscr{N}_i$, respectively. Then
	\begin{itemize}
		\item[i.] ${Z_ + }\left(\mathscr{N}_1,K_1\right) \cap {Z_ + }\left(\mathscr{N}_2,K_2\right) \cap ... \cap {Z_ + }\left(\mathscr{N}_k,K_k\right) \subseteq  {Z_ + }\left(\mathscr{N},K\right)$.\\
		If the decomposition is incidence independent, then
		\item[ii.] ${Z_ + }\left( {\mathscr{N},K} \right) = {Z_ + }\left(\mathscr{N}_1,K_1\right) \cap {Z_ + }\left(\mathscr{N}_2,K_2\right) \cap ... \cap {Z_ + }\left(\mathscr{N}_k,K_k\right)$, and
		\item[iii.] ${Z_ + }\left( {\mathscr{N},K} \right) \ne \varnothing$ implies ${Z_ + }\left( {\mathscr{N}_i,K_i} \right) \ne \varnothing$ for each $i=1,...,k$.
	\end{itemize}
\end{theorem}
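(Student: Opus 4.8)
The plan is to reduce all three parts to a single additive identity that expresses the global complex-balancing operator $I_a K(\cdot)$ as a sum of the subnetwork operators, and then to read off incidence independence as a directness condition on the images involved.

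First I would fix an ordering of the reactions listing the block $\mathscr{R}_1$ first, then $\mathscr{R}_2$, and so on; since a decomposition is induced by a \emph{partition} of $\mathscr{R}$, the value $K(x)$ decomposes as the stacked vector $K(x)=(K_1(x),\ldots,K_k(x))$ and the incidence matrix $I_a$ of $\mathscr{N}$ splits column-wise into blocks $I_a|_{\mathscr{R}_i}$. Because a complex absent from $\mathscr{C}_i$ contributes only zero rows to $I_a|_{\mathscr{R}_i}$, this block equals the subnetwork incidence matrix $(I_a)_i$ composed with the canonical inclusion $\iota_i:\mathbb{R}^{n_i}\hookrightarrow\mathbb{R}^n$ of the subnetwork complex space, i.e.\ $I_a|_{\mathscr{R}_i}=\iota_i\circ (I_a)_i$. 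This yields the key identity
$$I_a K(x) = \sum_{i=1}^k \iota_i\big((I_a)_i K_i(x)\big),$$
valid for all $x$ in the common positive domain. Part (i) is then immediate: if $x\in\mathbb{R}^m_{>0}$ satisfies $(I_a)_iK_i(x)=0$ for every $i$, each summand vanishes, so $I_aK(x)=0$ and $x\in Z_+(\mathscr{N},K)$; no independence hypothesis is used, consistent with this being the easy inclusion.

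For (ii) I would recast incidence independence geometrically. The inclusions $\iota_i$ are injective and the columns of the blocks jointly span $\operatorname{Im} I_a$, so one always has $\sum_{i=1}^k \iota_i(\operatorname{Im}(I_a)_i)=\operatorname{Im} I_a$; since $\dim\operatorname{Im} I_a=n-\ell$ and $\dim\iota_i(\operatorname{Im}(I_a)_i)=\dim\operatorname{Im}(I_a)_i=n_i-\ell_i$, the equivalent form $n-\ell=\sum_i(n_i-\ell_i)$ of incidence independence says exactly that this sum is \emph{direct}. Now let $x\in Z_+(\mathscr{N},K)$; the key identity gives $\sum_{i=1}^k \iota_i((I_a)_iK_i(x))=0$ with the $i$-th term lying in $\iota_i(\operatorname{Im}(I_a)_i)$. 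Directness forces every term to vanish, and injectivity of $\iota_i$ then yields $(I_a)_iK_i(x)=0$; together with $x>0$ this gives $x\in Z_+(\mathscr{N}_i,K_i)$ for each $i$. With (i) this proves the equality (ii), and (iii) follows at once, since any $x\in Z_+(\mathscr{N},K)$ then lies in every $Z_+(\mathscr{N}_i,K_i)$, none of which can therefore be empty.

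The main obstacle is not any single computation but the bookkeeping around the inclusions $\iota_i$: one must make precise that each subnetwork incidence matrix is the corresponding global block with the rows of absent complexes removed, and that the definitional phrase ``$I_a$ is the direct sum of the subnetworks' incidence maps'' is exactly the directness of $\sum_i \iota_i(\operatorname{Im}(I_a)_i)$. Once this translation is fixed, the standard rank identity $\operatorname{rank} I_a=n-\ell$ (and its subnetwork analogues) converts incidence independence into the directness that upgrades ``the sum of the terms is zero'' to ``each term is zero'', which is the entire content of the reverse inclusion in (ii).
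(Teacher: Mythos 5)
Your proof is correct. Note that the paper itself does not prove this theorem --- it is imported verbatim as Theorem 4 of \cite{FAML2020} --- and your argument (splitting $I_a$ column-wise into the subnetwork blocks $\iota_i\circ (I_a)_i$, so that $I_aK(x)=\sum_i \iota_i\bigl((I_a)_iK_i(x)\bigr)$, then using $\operatorname{rank} I_a=n-\ell$ and its subnetwork analogues to translate incidence independence, $n-\ell=\sum_i(n_i-\ell_i)$, into directness of $\sum_i\iota_i(\operatorname{Im}(I_a)_i)$, which forces each summand of a vanishing sum to vanish) is essentially the standard proof given in that cited reference, so there is nothing to add.
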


In contrast to the case of positive equilibria, there is a class of decompositions ensuring the converse of statement iii. To state this, we introduce the concept of the set $\mathscr{C}_\mathscr{D}$ of common complexes of a decomposition, first introduced in \cite{FOFM2021}:

\begin{definition}
	Let $\mathscr{C}_i$  be the set of complexes of a subnetwork $\mathscr{N}_i$ in a decomposition $\mathscr{D}: \mathscr{N}  = \mathscr{N}_1 \cup \ldots \cup \mathscr{N}_k$. The set of common complexes given by $\mathscr{C}_\mathscr{D} := \displaystyle \cup (\mathscr{C}_i \cap \mathscr{C}_j)$, where $i \ne j$ and $i,j = 1,\ldots, k$.
	A decomposition is a $\mathscr{C}^*$-decomposition if $|\mathscr{C}_\mathscr{D}| \le 1$ and a $\mathscr{C}$-decomposition if $|\mathscr{C}_\mathscr{D}| =0$.
\end{definition}

The $\mathscr{C}$-decompositions are those which also partition the set of complexes. In \cite{FAML2020}, it is shown that any \cite{FAML2020} decomposition is a coarsening of the linkage class decomposition. More importantly, the following converse to Statement iii. holds:

\begin{theorem} (Theorem 5 \cite{FAML2020})
	Let $\mathscr{N} = \mathscr{N}_1 \cup \mathscr{N}_2 \cup ... \cup \mathscr{N}_k$ be a weakly reversible $\mathscr{C}$-decomposition of a chemical kinetic system $(\mathscr{N},K)$. If ${Z_ + }\left( {\mathscr{N}_i,K_i} \right) \ne \varnothing$ for each $i=1,...,k$, then ${Z_ + }\left( {\mathscr{N},K} \right) \ne \varnothing$.
\end{theorem}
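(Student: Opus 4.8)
The plan is to reduce the claim to the incidence-independent case already handled by Theorem~\ref{decomposition:thm:2} and then to manufacture a single common complex balanced equilibrium using weak reversibility. First I would record that a weakly reversible $\mathscr{C}$-decomposition is incidence independent. Since $|\mathscr{C}_\mathscr{D}| = 0$, the complex sets $\mathscr{C}_1,\ldots,\mathscr{C}_k$ are pairwise disjoint, so $n = \sum_i n_i$; and because (as noted in the text) a $\mathscr{C}$-decomposition is a coarsening of the linkage class decomposition, each subnetwork is a union of linkage classes of $\mathscr{N}$ on disjoint complexes, whence $l = \sum_i l_i$. Together these give $n-l = \sum_i(n_i - l_i)$, which is exactly incidence independence.

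With incidence independence in hand, I would invoke statement~ii of Theorem~\ref{decomposition:thm:2} to obtain the equality $Z_+(\mathscr{N},K) = \bigcap_{i=1}^k Z_+(\mathscr{N}_i,K_i)$. The entire problem then collapses to a nonemptiness statement: since each $Z_+(\mathscr{N}_i,K_i)$ is nonempty by hypothesis, I must produce one $x^* \in \mathbb{R}^m_{>0}$ lying in all of them simultaneously, equivalently a point at which the flux $K(x^*)$ restricted to every subnetwork lies in $\ker I_a$.

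The main obstacle is precisely this final nonemptiness, and it is here that weak reversibility, rather than mere incidence independence, must do the work: statement~iii of Theorem~\ref{decomposition:thm:2} supplies only the forward implication $Z_+(\mathscr{N},K)\neq\varnothing \Rightarrow Z_+(\mathscr{N}_i,K_i)\neq\varnothing$, so for the converse one needs the extra geometric rigidity that weak reversibility confers on complex balanced loci. Concretely, for a weakly reversible system the balancing condition $I_a K_i(x)=0$ decouples over the strongly connected linkage classes of $\mathscr{N}_i$, and the resulting complex balanced equilibrium set carries the log-coset (Horn--Jackson type) structure: once a single equilibrium $x_i$ is known, all of $Z_+(\mathscr{N}_i,K_i)$ is recovered by translating $\ln x_i$ inside a fixed subspace. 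The plan is then to choose $x_i \in Z_+(\mathscr{N}_i,K_i)$ and to adjust each one within its own coset toward a common $x^*$, exploiting that the $\mathscr{C}$-decomposition partitions the complexes and therefore places the defining equations of distinct subnetworks on disjoint blocks of complex space. I expect the crux to be verifying that this system of coset constraints is consistent, i.e.\ that the family $\{Z_+(\mathscr{N}_i,K_i)\}$ genuinely shares a point; controlling that consistency from weak reversibility together with the disjointness of the complex blocks is the step I anticipate to be hardest.
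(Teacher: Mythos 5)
Your opening reductions are correct and match the facts this paper itself cites from \cite{FAML2020}: a $\mathscr{C}$-decomposition partitions the complexes, is a coarsening of the linkage class decomposition, hence is incidence independent, and Theorem~\ref{decomposition:thm:2}(ii) then yields $Z_+(\mathscr{N},K)=\bigcap_i Z_+(\mathscr{N}_i,K_i)$. (Note that the paper states the theorem only as a citation and contains no proof of it, so the comparison can only be against your argument itself.) The genuine gap is exactly the step you flag as hardest, and it is not merely hard --- it cannot be closed from the quoted hypotheses by your coset scheme. First, the Horn--Jackson log-coset structure of $Z_+(\mathscr{N}_i,K_i)$ is a mass-action (more generally PL-RDK/GMAS) phenomenon, whereas the statement quantifies over arbitrary kinetics $K$, for which no such parametrization exists. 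Second, and more fatally, even when each $Z_+(\mathscr{N}_i,K_i)$ is a log-coset $\log x_i+\widetilde{S}_i^{\perp}$, consistency of the family requires $\log x_1-\log x_2\in\widetilde{S}_1^{\perp}+\widetilde{S}_2^{\perp}$, and disjointness of the complex blocks gives no control over this: the subnetworks may share \emph{species}, so the relevant subspaces can coincide rather than be transverse. Compare the paper's own Theorem in Section~\ref{sec:existence:parametrization}, where precisely this obstruction is removed by \emph{assuming} bi-level independence (so $\widetilde{S}_1\cap\widetilde{S}_2=\{0\}$, whence $\widetilde{S}_1^{\perp}+\widetilde{S}_2^{\perp}=\mathbb{R}^{\mathscr{S}}$) together with PLP-type subnetworks; your plan is essentially that proof transplanted to a setting that lacks its key hypothesis.

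To see the failure concretely, take $\mathscr{S}=\{X,Y\}$ and the mass action system with reactions $X\rightleftharpoons Y$ and $2X\rightleftharpoons 2Y$, rate constants $k_1=k_2=k_3=1$, $k_4=4$, decomposed into its two linkage classes. This is a weakly reversible $\mathscr{C}$-decomposition; $Z_+(\mathscr{N}_1,K_1)=\{(x,y)\in\mathbb{R}^2_{>0}\,:\,y=x\}$ and $Z_+(\mathscr{N}_2,K_2)=\{(x,y)\in\mathbb{R}^2_{>0}\,:\,x=2y\}$ are both nonempty, yet their intersection --- which by incidence independence equals $Z_+(\mathscr{N},K)$ --- is empty. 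In log-coordinates both sets are cosets of the \emph{same} line $\mathrm{span}\{(1,1)\}$, so no adjustment within cosets can produce a common point: your anticipated ``consistency from weak reversibility plus disjoint complex blocks'' is false as a general principle. Consequently no argument can derive the conclusion from weak reversibility and $|\mathscr{C}_{\mathscr{D}}|=0$ alone; the statement as reproduced here must implicitly carry additional hypotheses from the source (e.g., an independence/transversality condition on the subnetworks, as in the paper's Section~\ref{sec:existence:parametrization} results), and a correct proof has to invoke them. Your proposal, which uses only the quoted hypotheses, therefore cannot be completed as outlined.
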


It is shown in \cite{FAML2020} too, that any $\mathscr{C}^*$-decomposition is incidence independent. As observed in \cite{FOFM2021}, for a decomposition with $|\mathscr{C}_\mathscr{D}| > 1$, this is no longer true in general. For example, the simple network $R_1: X \to Y$, $R_2: Y \to X$ has the decomposition $\{R_1\} \cup \{R_2\}$ with $|\mathscr{C}_\mathscr{D}| =2 $, $n - l = 1 < 1 + 1 = 2$, and hence, not incidence independent. 

The combination of the independence and incidence independence in a decomposition leads to several interesting properties:

\begin{definition}
	A decomposition is {\bf{bi-independent}} if it is both independent and incidence independent.
\end{definition}

\begin{proposition}
	For any decomposition, the following statements are equivalent:
	\begin{enumerate}
		\item[i.] $\delta = \delta_1+ \ldots + \delta_k$ and independent
		\item[ii.] $\delta = \delta_1+ \ldots + \delta_k$ and incidence independent
		\item[iii.] bi-independent
	\end{enumerate}
\end{proposition}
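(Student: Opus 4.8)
The plan is to reduce all three conditions to a single pair of scalar equations and then verify the equivalences by elementary arithmetic with the deficiency formula. First I would record the two identities that translate the structural notions into dimension counts: independence is equivalent to $s = s_1 + \cdots + s_k$, and incidence independence is equivalent to $n - l = \sum_i (n_i - l_i)$ (the latter is already noted in the excerpt). The first deserves a one-line justification: since $S = S_1 + \cdots + S_k$ holds for every decomposition, we always have $s = \dim S \le \sum_i s_i$, with equality precisely when the sum is direct, i.e.\ precisely when the decomposition is independent. Bi-independence (statement iii) is, by definition, exactly the conjunction of these two equalities.

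Next I would combine the deficiency formula $\delta = n - l - s$ with its subnetwork versions $\delta_i = n_i - l_i - s_i$ to obtain
$$\sum_{i=1}^k \delta_i - \delta = \left[\sum_i (n_i - l_i) - (n - l)\right] - \left[\sum_i s_i - s\right].$$
Both bracketed quantities are nonnegative: the first because the image of $I_a$ is the sum of the images of the subnetwork incidence maps (so $n-l \le \sum_i(n_i-l_i)$), the second because $S = \sum_i S_i$ (so $s \le \sum_i s_i$); and each vanishes exactly under the corresponding independence notion. This single identity packages the two earlier Propositions ($\delta \le \sum_i \delta_i$ for independent decompositions, $\delta \ge \sum_i \delta_i$ for incidence independent ones) and shows that $\delta = \sum_i \delta_i$ holds together with either structural hypothesis precisely when both bracketed terms vanish.

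With this in hand the three implications are immediate, and I would present them as a short cycle. For (iii) $\Rightarrow$ (i) and (ii): bi-independence makes both brackets zero, hence $\delta = \sum_i \delta_i$, while by definition the decomposition is both independent and incidence independent. For (i) $\Rightarrow$ (iii): independence kills the second bracket, so $\delta = \sum_i \delta_i$ forces the first bracket to vanish, which is incidence independence. For (ii) $\Rightarrow$ (iii): incidence independence kills the first bracket, so $\delta = \sum_i \delta_i$ forces $\sum_i s_i = s$; since $S = \sum_i S_i$ always, this equality of dimensions makes the sum direct, hence the decomposition is independent.

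The argument is essentially arithmetic once the translation is set up, so I do not anticipate a genuine obstacle. The one point I would state explicitly rather than gloss over is the linear-algebra fact that $\dim(S_1 + \cdots + S_k) = \sum_i \dim S_i$ is equivalent to the sum being direct; this is what licenses passing between the scalar equality $s = \sum_i s_i$ and the subspace-theoretic definition of independence in the step (ii) $\Rightarrow$ (iii), and it is the sole place where the directness-from-dimension count is actually invoked.
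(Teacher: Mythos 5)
Your proposal is correct and takes essentially the same route as the paper: the paper's one-line proof is exactly your central identity, stated as $\delta = \delta_1 + \ldots + \delta_k \iff (n-l)-\sum(n_i-l_i) = s-\sum s_i$, followed by the observation that if one side is zero, so is the other. Your write-up merely makes explicit the translation lemmas the paper leaves implicit (independence $\iff s = \sum s_i$ via directness of the sum $S = \sum S_i$, and incidence independence $\iff n-l = \sum(n_i-l_i)$), which is a reasonable expository addition but not a different argument.
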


\begin{proof}
	We have $\delta = \delta_1+ \ldots + \delta_k \iff (n-l)-\sum(n_i-l_i)=s-\sum s_i$. If one side is zero, so is the other.
\end{proof}

A zero deficiency decomposition (ZDD) is a decomposition whose subnetworks have zero deficiency. In a deficiency zero network, we obtain the following interesting equivalences:

\begin{proposition}
	If the network has zero deficiency, then the following statements are equivalent:
	\begin{enumerate}
		\item[i.] incidence independent 
		\item[ii.] independent + ZDD
		\item[iii.] bi-independent
	\end{enumerate}
\end{proposition}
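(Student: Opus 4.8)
The plan is to prove the three equivalences by a cyclic chain of implications, (iii) $\Rightarrow$ (i) $\Rightarrow$ (ii) $\Rightarrow$ (iii), drawing on three ingredients already available: the inequality $\delta \ge \delta_1 + \ldots + \delta_k$ for incidence independent decompositions (Proposition \ref{farinas:incidence:independent}); the non-negativity $\delta_i \ge 0$ of every subnetwork deficiency (a standard consequence of $s_i = \mathrm{rank}(N_i) \le \mathrm{rank}((I_a)_i) = n_i - l_i$); and the equivalence proposition for arbitrary decompositions stated just above, which asserts that ``$\delta = \sum \delta_i$ together with independence'' and ``$\delta = \sum \delta_i$ together with incidence independence'' are each equivalent to bi-independence. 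The point of the cyclic arrangement is that it lets the hypothesis $\delta = 0$ enter exactly once, in the single step that genuinely needs it.

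The implication (iii) $\Rightarrow$ (i) is immediate, since bi-independence is by definition the conjunction of independence and incidence independence, so incidence independence is one of its two constituents.

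For (i) $\Rightarrow$ (ii) I would exploit the hypothesis $\delta = 0$ through a squeeze. Incidence independence gives $0 = \delta \ge \delta_1 + \ldots + \delta_k$, while each $\delta_i \ge 0$ forces $\delta_1 + \ldots + \delta_k = 0$ and hence every $\delta_i = 0$; this is precisely the ZDD property and, simultaneously, the chain equality $\delta = \delta_1 + \ldots + \delta_k$. Feeding ``$\delta = \sum \delta_i$ and incidence independent'' into the preceding equivalence proposition upgrades the hypothesis to bi-independence, in particular to independence; combined with the ZDD just obtained, this yields (ii).

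Finally, for (ii) $\Rightarrow$ (iii), ZDD gives $\delta_1 + \ldots + \delta_k = 0 = \delta$ directly, so ``$\delta = \sum \delta_i$ and independent'' holds, and the same equivalence proposition again promotes this to bi-independence. The one genuinely new step beyond the general equivalence proposition is the squeeze argument in (i) $\Rightarrow$ (ii): it is the place where the global hypothesis $\delta = 0$ does real work, guaranteeing that no deficiency can ``leak'' into the subnetworks and thereby pinning the decomposition to ZDD. I expect this squeeze, together with correctly invoking the two directions of the preceding proposition, to be the only points requiring care; everything else is bookkeeping.
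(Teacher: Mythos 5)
Your proof is correct and takes essentially the same route as the paper's: the same cycle of implications, the same squeeze argument ($0 = \delta \ge \delta_1 + \ldots + \delta_k$ together with $\delta_i \ge 0$) to force ZDD, and the same deficiency bookkeeping to pass between independence and incidence independence. The only cosmetic difference is that you invoke the preceding equivalence proposition where the paper re-derives its underlying equation $(n-l)-\sum(n_i - l_i) = s - \sum s_i$ inline; since that proposition is proved by exactly this equation, the arguments coincide.
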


\begin{proof}
	(i) $\implies$ (ii) Since deficiency is an upper bound for subnetwork deficiencies in an incidence independent decomposition, it is also ZDD. Since $n - l = s$ and $n_i - l_i = s_i$, independence follows too. (ii) $\implies$ (iii) the same equation read right to left delivers incidence independence, and (iii) $\implies$ (i) is trivial.
\end{proof}



Despite its early founding by M. Feinberg in 1987, decomposition theory has received little attention in the CRNT community. Recently however, its usefulness beyond results on equilibria existence and parametrization, e.g., in the relatively new field of concentration robustness. Interesting results for larger and high deficiency systems have been derived for more general kinetic systems such as power law systems \cite{FOMF2021,FOME2020} and Hill-type systems \cite{HEME2021}.

\subsection{A power law representation of Schmitz's global carbon cycle model}
\label{schmitz:running:example}

\begin{figure}
	\centering
	\begin{minipage}[b]{.7\textwidth}
		\includegraphics[width=\textwidth]{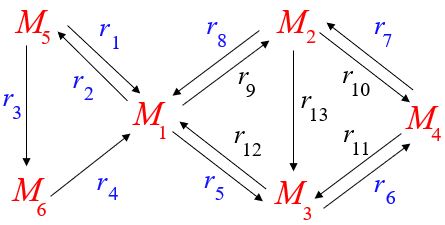}
		\caption{Schmitz's global carbon cycle model adapted from \cite{fortun2}}
		\label{schmitznetwork}
	\end{minipage}\hfill
	
	\begin{minipage}[b]{.45\textwidth}
		\centering
		\label{M}
		\(
		\begin{blockarray}{ccccccl}
			&  &  &  &  &   \\
			{\color{red}M_1} & {\color{red}M_2} & {\color{red}M_3} & {\color{red}M_4} & {\color{red}M_5} & {\color{red}M_6}  \\
			\begin{block}{[cccccc]l}
				{\color{blue}0} & {\color{blue}0} & {\color{blue}0} & {\color{blue}0} & {\color{blue}1} & {\color{blue}0} & {\color{blue}r_1} \\
				{\color{blue}0.36} & {\color{blue}0} & {\color{blue}0} & {\color{blue}0} & {\color{blue}0} & {\color{blue}0} & {\color{blue}r_2} \\
				{\color{blue}0} & {\color{blue}0} & {\color{blue}0} & {\color{blue}0} & {\color{blue}1} & {\color{blue}0} & {\color{blue}r_3} \\
				{\color{blue}0} & {\color{blue}0} & {\color{blue}0} & {\color{blue}0} & {\color{blue}0} & {\color{blue}1} & {\color{blue}r_4} \\
				{\color{blue}1} & {\color{blue}0} & {\color{blue}0} & {\color{blue}0} & {\color{blue}0} & {\color{blue}0} & {\color{blue}r_5} \\
				{\color{blue}0} & {\color{blue}0} & {\color{blue}1} & {\color{blue}0} & {\color{blue}0} & {\color{blue}0} & {\color{blue}r_6} \\
				{\color{blue}0} & {\color{blue}0} & {\color{blue}0} & {\color{blue}1} & {\color{blue}0} & {\color{blue}0} & {\color{blue}r_7} \\
				{\color{blue}0} & {\color{blue}9.4} & {\color{blue}0} & {\color{blue}0} & {\color{blue}0} & {\color{blue}0} & {\color{blue}r_8}\\
				1& 0 & 0 & 0 & 0 & 0 & r_9 \\
				0& 1 & 0 & 0 & 0 & 0 & r_{10} \\
				0& 0 & 0 & 1 & 0 & 0 & r_{11} \\
				0& 0 & 10.2 & 0 & 0 & 0 & r_{12} \\
				0& 1 & 0 & 0 & 0 & 0 & r_{13} \\
			\end{block}
		\end{blockarray}
		\)
		\caption{Kinetic order matrix for Schmitz's global carbon cycle model}\label{kinetic:order:matrix:schmitz}
	\end{minipage}
\end{figure}

We refer to Figure \ref{schmitznetwork}. R. Schmitz's model \cite{schmitz} of the earth's carbon cycle in the pre-industrial state consists of six carbon pools -- $M_1$(atmosphere), $M_2$(warm ocean surface water), $M_3$(cool ocean surface water), $M_4$ (deep ocean waters), $M_5$(terrestrial biota) and $M_6$(soil and detritus) -- and 13 (directed) mass transfers among them. A power law representation of the model was first investigated by Fortun et al. in \cite{fortun2}: the weakly reversible system $(\mathscr{N}, K)$ has zero deficiency since its complexes are monospecies and a PL-NDK system with 3 NF nodes $M_1$, $M_2$ and $M_3$ (s. Figure and kinetic order matrix). Table \ref{table:reactionsets:nodes:schmitz} lists the reaction sets and the CF-subsets of the NF nodes.

\begin{table}
	\centering
	\caption{Reaction sets and CF-subsets of the NF nodes for the Schmitz's global carbon cycle model in Section \ref{schmitz:running:example}}
	\label{table:reactionsets:nodes:schmitz}       
	\begin{tabular}{lll}
		CF node  & Reaction set & CF-subsets \\
		\noalign{\smallskip}\hline\noalign{\smallskip}
		$M_1$ & $\{r_2,r_5,r_9\}$ & $\{r_2\},\{r_5,r_9\}$\\
		$M_2$ & $\{r_8,r_{10},r_{13}\}$ & $\{r_8\}, \{r_{10},r_{13}\}$\\
		$M_3$ & $\{r_6,r_{12}\}$ & $\{r_6\},\{r_{12}\}$\\
		\noalign{\smallskip}\hline\noalign{\smallskip}
	\end{tabular}
\end{table}

The system has a weakly reversible PL-RDK decomposition $\mathscr{N} = \mathscr{N}_1 \cup \mathscr{N}_2 \cup \mathscr{N}_3$, with the subnetworks induced by the following subsets of the reaction set:
\begin{align*}
	\mathscr{R}_1 &= \{r_1, r_2, r_3, r_4\},\\
	\mathscr{R}_2 &= \{r_5, r_6, r_7, r_8\}, {\text{ and}}\\
	\mathscr{R}_3 &= \{r_9, r_{10}, r_{11}, r_{12}, r_{13}\},  {\text{ respectively.}}
\end{align*}
The decomposition is also clearly a zero deficiency decomposition (ZDD). Its set of $\mathscr{C}_\mathscr{D}$ of common complexes = $\{M_1, M_2, M_3, M_4\}$.

The weakly reversible subnetwork $\mathscr{N}':= \mathscr{N}_1 \cup \mathscr{N}_2$ is of special interest since the PL-RDK decomposition for it is also independent. Being also ZDD in a deficiency zero network, the decomposition is bi-independent. It is also a $\mathscr{C}^*$-decomposition. The kinetic order matrix of the subsystem consists of the first 8 rows of the matrix in Figure \ref{kinetic:order:matrix:schmitz}, showing that the subsystem is PL-NDK with one NF-node $M_1$ with 2 CF-subsets $\{r_2\}$ and $\{r_5\}$.

\section{Weakly reversible CF-decompositions of a linkage class}
\label{sec:wr:decom}

Our approach is based on the following Proposition:

\begin{proposition}
	A weakly reversible chemical kinetic system $(\mathscr{N}, K)$ has a weakly reversible CF-decomposition if and only if each of its (weakly reversible) linkage classes has a weakly reversible CF-decomposition.
\end{proposition}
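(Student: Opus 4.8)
The plan is to exploit the fact that the linkage classes of $\mathscr{N}$ themselves form a decomposition whose reaction sets $\mathscr{R}(\mathscr{L}_1),\ldots,\mathscr{R}(\mathscr{L}_l)$ partition $\mathscr{R}$, and to read a ``weakly reversible CF-decomposition'' as one every subnetwork of which is simultaneously weakly reversible and complex factorizable. Both implications then reduce to combinatorial bookkeeping on reaction sets, supported by two structural observations: that CF is inherited by subnetworks, and that a strongly connected subgraph of $\mathscr{N}$ lies wholly inside a single linkage class.

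The backward implication ($\Leftarrow$) is the assembly step. Suppose each linkage class $\mathscr{L}_p$ carries a weakly reversible CF-decomposition $\mathscr{L}_p = \mathscr{L}_{p,1} \cup \cdots \cup \mathscr{L}_{p,k_p}$. Because the reaction sets of distinct linkage classes are disjoint and exhaust $\mathscr{R}$, the collection of all inducing subsets $\{\mathscr{R}_{p,j}\}_{p,j}$ is again a partition of $\mathscr{R}$. The induced decomposition $\mathscr{N} = \bigcup_{p,j}\mathscr{L}_{p,j}$ then has every subnetwork weakly reversible and CF by hypothesis, which is exactly a weakly reversible CF-decomposition of $\mathscr{N}$.

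The forward implication ($\Rightarrow$) is the refinement-and-regroup step and carries the real content. Starting from a weakly reversible CF-decomposition $\mathscr{N} = \mathscr{N}_1 \cup \cdots \cup \mathscr{N}_k$, I would first refine it by splitting each $\mathscr{N}_i$ into its own linkage classes $\mathscr{N}_{i,1},\ldots,\mathscr{N}_{i,m_i}$. This refinement is still a weakly reversible CF-decomposition: each linkage class of a weakly reversible network is strongly connected, hence weakly reversible; and CF passes to subnetworks (for PL-RDK this is immediate, since any two co-reactant reactions surviving in a subnetwork already had identical rows of $F$ in $\mathscr{N}$; in general one keeps the factor map $\Psi_K$ and restricts $I_k$). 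I would then regroup the refined pieces by the linkage class of $\mathscr{N}$ in which each sits: since every $\mathscr{N}_{i,j}$ is connected as a subgraph, all its complexes lie in one linkage class of $\mathscr{N}$, so the piece is contained in a single $\mathscr{L}_p$. Collecting, for each $p$, those pieces contained in $\mathscr{L}_p$ then yields a weakly reversible CF-decomposition of $\mathscr{L}_p$.

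The main obstacle is confirming that this regrouping produces a genuine decomposition of each individual linkage class rather than merely a covering. The point to verify is that every reaction of $\mathscr{L}_p$ lands in exactly one refined piece contained in $\mathscr{L}_p$: it belongs to a unique $\mathscr{N}_i$ because the $\mathscr{R}_i$ partition $\mathscr{R}$, within $\mathscr{N}_i$ to a unique linkage class $\mathscr{N}_{i,j}$, and that piece sits inside the linkage class of $\mathscr{N}$ determined by the reaction's complexes, namely $\mathscr{L}_p$. This is precisely where weak reversibility of the subnetworks is essential: without it a subnetwork might fail to split cleanly along strongly connected components, and a piece could straddle two linkage classes of $\mathscr{N}$, destroying the regrouping.
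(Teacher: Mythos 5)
Your proof is correct and takes essentially the same route as the paper, whose one-line argument simply intersects each $\mathscr{R}(\mathscr{N}_i)$ with $\mathscr{R}(\mathscr{L}_j)$ for the forward direction and calls the converse straightforward --- your refine-then-regroup construction just spells out the details (indeed, your decomposition of each $\mathscr{L}_p$ is a refinement of the paper's intersection decomposition, and both rest on the same facts: CF passes to subnetworks, and a connected subgraph lies in a single linkage class). One small correction to your closing remark: connectedness alone already prevents a refined piece from straddling two linkage classes of $\mathscr{N}$; weak reversibility of the $\mathscr{N}_i$ is needed not for that, but so that each linkage class of $\mathscr{N}_i$ is strongly connected and hence the pieces themselves are weakly reversible.
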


\begin{proof}
	For a weakly reversible CF-decomposition $\mathscr{N} = \mathscr{N}_1 \cup \ldots \cup \mathscr{N}_k$ and any linkage class $\mathscr{L}_j$, the sets $\mathscr{R}(\mathscr{N}_i) \cap \mathscr{R}(\mathscr{L}_j)$ define a weakly reversible CF-decomposition of $\mathscr{L}_j$. The converse is also straightforward. 
\end{proof}

In this section, we investigate conditions regarding existence of weakly reversible CF-decompositions by checking if such decomposition is possible for the linkage classes.

\subsection{Eulerian linkage classes}
\label{eulerian:linkage:classes}
We introduce some concepts that are essential to this paper about digraphs provided in \cite{BG2009,BM2008}.

\begin{definition}
	Let $D$ be a digraph. The {\bf in-degree} of a vertex $v$ in $D$, denoted by $d_-(v)$  is the number of arcs with head $v$, and the {\bf out-degree} of $v$, denoted by $d_+(v)$, is the number of arcs with tail $v$.
\end{definition}

\begin{definition}
	A digraph $D$ is {\bf even} if $d_+(v)=d_-(v)$ for each vertex $v \in V$.
\end{definition}

\begin{definition}
	A digraph $D$ is {\bf Eulerian} if it contains a closed trail (i.e., a walk with no repeated arcs) that contains all of the arcs in $D$. This closed trail is also called an Eulerian Trail.
\end{definition}

Theorem \ref{digraph:cycle:decomposition} was taken from \cite{BG2009}, which was attributed to Veblen, while Theorem \ref{digraph:cycle:decomposition:euler} was taken from the version in \cite{BM2008}, which was called the Euler's Theorem. These results are useful to determine whether a digraph can be decomposed into cycles.

\begin{theorem} 
	A digraph admits a cycle decomposition if and only if it is even.
	\label{digraph:cycle:decomposition}
\end{theorem}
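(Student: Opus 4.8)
The plan is to prove both implications, handling the easy necessity first and then establishing sufficiency by induction on the number of arcs. Throughout, ``cycle'' means a simple cycle in the sense fixed earlier in the paper.

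For the forward direction, suppose the arc set of $D$ partitions into simple cycles $C_1, \ldots, C_m$. In each simple cycle, every vertex lying on it has exactly one incoming and one outgoing arc belonging to that cycle. Since the cycles partition the arcs, at any vertex $v$ the in-degree $d_-(v)$ equals the number of cycles passing through $v$, which is also exactly the out-degree $d_+(v)$. Hence $D$ is even. This direction is essentially a counting argument and should pose no difficulty.

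For the converse, I would induct on the number of arcs $\left| A(D) \right|$. The base case of no arcs is trivial via the empty decomposition. For the inductive step, assume $D$ is even with at least one arc. The crux is to extract a single simple cycle. Starting from any arc $(v_0, v_1)$, I build a walk $v_0 v_1 v_2 \cdots$: whenever the walk reaches a vertex $v_i$ along an incoming arc, evenness forces $d_+(v_i) \ge d_-(v_i) \ge 1$, so an unused outgoing arc is available and the walk extends. As $D$ is finite, some vertex must recur; taking the \emph{first} recurrence yields a closed subwalk whose intermediate vertices are distinct, that is, a simple cycle $C$. Deleting the arcs of $C$ produces $D'$, in which each vertex of $C$ loses exactly one incoming and one outgoing arc (and all other vertices lose none), so $D'$ remains even with strictly fewer arcs. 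The induction hypothesis gives a cycle decomposition of $D'$, and appending $C$ decomposes $D$.

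The main obstacle is the cycle-extraction step: one must guarantee that the walk never gets stuck and that it terminates in a genuine simple cycle rather than a mere closed walk with repeated vertices. Both hinge precisely on the evenness hypothesis — evenness is what ensures an exit arc always exists — and on selecting the first recurring vertex to force vertex-distinctness. I expect this extraction, together with the verification that arc-deletion preserves evenness, to be the technical heart of the argument; the remaining bookkeeping is routine.
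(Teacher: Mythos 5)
Your proof is correct, but there is nothing in the paper to compare it against: the paper states this result as Veblen's theorem, quoted from the Bang-Jensen--Gutin monograph \cite{BG2009}, and uses it as a black box without giving any proof. Your argument is the standard self-contained one. The forward counting direction is fine, since a simple cycle through $v$ contributes exactly one in-arc and one out-arc at $v$, so both degrees equal the number of cycles of the partition containing $v$. In the converse, the one delicate point -- ensuring the extracted closed walk is a genuine simple cycle rather than a closed walk with repeated vertices -- is handled properly by stopping at the \emph{first} repeated vertex: up to that moment all visited vertices are distinct, so each newly entered vertex has consumed exactly one of its in-arcs and none of its out-arcs, and evenness ($d_+(v)=d_-(v)\ge 1$) guarantees an available exit, so the walk cannot get stuck before a repetition occurs. (Your phrasing ``an unused outgoing arc is available'' implicitly relies on this vertex-distinctness; if vertices could recur before you stopped, you would need the slightly more careful count that after $k$ entries to $v$ only $k-1$ exits have been used, but with the first-recurrence convention the simple version suffices.) Deleting the cycle lowers $d_+$ and $d_-$ by one at exactly its vertices, preserving evenness, so the induction on the number of arcs closes, and the empty decomposition settles the base case. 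One minor remark: under the paper's convention a ``cycle'' has $k\ge 3$ in the vertex sequence, which excludes loops; this is harmless here since reaction networks contain no loops, and $2$-cycles (e.g., $X\to Y$, $Y\to X$) are admitted by that definition, which is all your extraction step ever produces.
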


\begin{theorem} 
	A directed multigraph $D$ is Eulerian if and only if $D$ is connected and $d_+(v)=d_-(v)$ for every vertex v of $D$.
	\label{digraph:cycle:decomposition:euler}
\end{theorem}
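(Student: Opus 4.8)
The plan is to establish both implications, using the cycle-decomposition result (Theorem \ref{digraph:cycle:decomposition}) as the engine for the harder direction. The forward implication is a counting argument on the trail, while the reverse implication builds an Eulerian trail by stitching cycles together, with connectivity doing the essential work.

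For necessity, suppose $D$ is Eulerian with a closed Eulerian trail $W$. Connectivity is immediate: any two vertices of positive degree both appear on $W$, and the portion of $W$ between their appearances is a walk joining them. For the degree condition I would traverse $W$ and observe that each visit to a vertex $v$ consumes exactly one arc with head $v$ and one arc with tail $v$; this holds even at the common start/end vertex precisely because $W$ is closed. Since $W$ uses every arc of $D$ exactly once, summing over all visits to $v$ gives $d_-(v) = d_+(v)$.

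For sufficiency, assume $D$ is connected with $d_+(v) = d_-(v)$ at every vertex, so that $D$ is even. By Theorem \ref{digraph:cycle:decomposition}, the arc set of $D$ partitions into cycles $C_1, \ldots, C_m$. I would then merge these greedily: start with $W := C_1$ as a closed trail, and while some cycle remains unused, find an unused cycle $C_j$ sharing a vertex $v$ with $W$ and splice $C_j$ into $W$ by inserting its full traversal at an occurrence of $v$. Because the cycles are arc-disjoint (they come from a partition), the spliced object is again a closed trail, now additionally using the arcs of $C_j$. Iterating until no cycle remains produces a closed trail using every arc exactly once, i.e.\ an Eulerian trail.

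The main obstacle is justifying, at each splicing step, that some not-yet-incorporated cycle must share a vertex with the current trail $W$; this is exactly where connectivity enters. If every remaining cycle were vertex-disjoint from $W$, then the vertices of positive degree would split into two nonempty classes -- those met by $W$ and those met only by remaining cycles -- with no arc, and hence no walk, joining them, contradicting that $D$ is connected. I would make this precise by a bipartition argument on the support of the arc set. Care must also be taken to read ``connected'' on the vertices of positive degree, since isolated vertices are irrelevant to the existence of an Eulerian trail; this is the convention under which the equivalence holds.
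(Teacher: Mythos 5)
Your proof is correct, but note that the paper itself offers no proof to compare against: it quotes this statement as Euler's Theorem from Bondy and Murty \cite{BM2008}, just as it quotes Theorem \ref{digraph:cycle:decomposition} (Veblen's theorem) from \cite{BG2009}. Judged on its own merits, your argument is the standard one and it is sound. Necessity is handled correctly: closedness of the Eulerian trail gives $d_+(v)=d_-(v)$ at every vertex, including the common start/end vertex, and the trail itself supplies a walk between any two vertices of positive degree. For sufficiency, invoking Theorem \ref{digraph:cycle:decomposition} to decompose the arc set into arc-disjoint cycles and then splicing them one at a time into a growing closed trail is the classical route, and your justification of the crucial splicing step is exactly right: if no remaining cycle met the current trail, every arc would have both endpoints in one of two disjoint nonempty vertex classes, so no arc (hence no walk) would join them, contradicting connectedness. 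Your closing caveat about isolated vertices is also well taken --- as literally stated, the equivalence requires reading connectivity on the vertices of positive degree, since an Eulerian multigraph with an isolated vertex is disconnected; this is the convention of the cited source, and it matters only in the necessity direction. Within the paper's framework, using the immediately preceding Veblen theorem as the engine for the hard direction is the natural and economical choice, and it is presumably why the authors stated the two results side by side.
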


Since we are studying cycles originating in a reactant complex, it suffices to consider a connected digraph, i.e., a linkage class. We will call a linkage class an NF-linkage class if it contains an NF-reactant complex, otherwise a CF-linkage class.

According to Theorems \ref{digraph:cycle:decomposition} and \ref{digraph:cycle:decomposition:euler}, a digraph is Eulerian if and only if it has a decomposition into directed cycles. In the lucky scenario that a linkage class is Eulerian, we obtain a solution to our problem:

\begin{proposition}
	If an NF linkage class $\mathscr{L}$ of $(\mathscr{N},K)$ is Eulerian, then the cycle decomposition is a weakly reversible CF-decomposition of the NF linkage class.
	\label{eulerian:linkage:classes1}
\end{proposition}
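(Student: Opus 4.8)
The plan is to show that the cycle decomposition, whose existence is guaranteed by the Eulerian hypothesis, automatically inherits both weak reversibility and the complex factorizable property on each of its pieces. First I would invoke the combinatorial input: since $\mathscr{L}$ is Eulerian, Theorem \ref{digraph:cycle:decomposition:euler} gives that it is connected and even, whereupon Theorem \ref{digraph:cycle:decomposition} (Veblen) furnishes a decomposition of its arc set into directed cycles. Writing $\{\mathscr{R}_1,\ldots,\mathscr{R}_p\}$ for the resulting partition, where each $\mathscr{R}_i$ is the arc set of one simple directed cycle, and recalling that the arcs of $\mathscr{L}$ are precisely its reactions, this is a genuine partition of $\mathscr{R}(\mathscr{L})$ and hence induces a network decomposition $\mathscr{L} = \mathscr{L}_1 \cup \ldots \cup \mathscr{L}_p$ in the sense of the decomposition definition.

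Next I would verify the weak reversibility of each subnetwork. Each $\mathscr{L}_i$ is a single simple directed cycle $C_1 \to C_2 \to \ldots \to C_m \to C_1$, which is strongly connected (one travels from any vertex to any other by following the arrows around the cycle) and forms a single linkage class; thus $sl = l = 1$ for $\mathscr{L}_i$, so it is weakly reversible by definition.

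The crucial step is to show that each $\mathscr{L}_i$ is complex factorizable, and this is where the Eulerian structure does the real work. Within a single simple cycle each vertex occurs exactly once and so has out-degree exactly $1$ in the subnetwork; equivalently, each reactant complex of $\mathscr{L}_i$ is the reactant of exactly one reaction of $\mathscr{L}_i$. Consequently $\mathscr{L}_i$ is non-branching, its reactant map is injective, and no reactant carries two reactions whose interaction functions could disagree. The defining condition for CF kinetics — that all branching reactions at a common reactant share the same interaction function — is therefore met vacuously; concretely, the factorization $K_i = I_k \circ \Psi_{K_i}$ exists because one may set the value of $\Psi_{K_i}(x)$ at each reactant complex equal to the single rate function there divided by its rate constant. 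Hence each $\mathscr{L}_i$ is CF, and the cycle decomposition is a weakly reversible CF-decomposition.

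I expect the only genuinely conceptual point to be this last one. The linkage class $\mathscr{L}$ is NF precisely because some reactant complex carries two branching reactions with differing interaction functions; but in any cycle decomposition a given vertex has at most one outgoing arc inside each individual cycle, so these conflicting reactions are necessarily routed into different cycles. Thus the Eulerian cycle decomposition is exactly the mechanism that separates the offending branches and dissolves the non-complex-factorizability, while the surrounding bookkeeping — that a simple cycle is strongly connected and that a non-branching subnetwork is automatically CF — is routine.
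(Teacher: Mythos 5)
Your proposal is correct and follows essentially the same route as the paper's own (one-line) proof: the paper likewise observes that each cycle is weakly reversible and non-branching, hence CF, so the cycle decomposition guaranteed by the Eulerian property is a weakly reversible CF-decomposition. You have merely expanded the same argument with the explicit combinatorial citations and the vacuous verification of the CF condition on non-branching subnetworks.
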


\begin{proof}
	This follows from the simple facts that a cycle is of course weakly reversible and is non-branching, and hence, a CF-decomposition.
\end{proof}


\subsection{Weakly reversible CF coverings of a weakly reversible linkage class with a single NF node}
\label{CF:cycle:covering}

We will define concepts in general for a kinetic system, but clearly the interesting ones that we will focus on are the weakly reversible systems.
Let $y$ be any complex in a weakly reversible linkage class $\mathscr{N}$.  If $K$ is an RIDK kinetics, let $\{{\rm{CF}}_i(y)\}$ be the CF-subsets of $y$, which partition the set $\mathscr{R}(y)$ of $y$'s branching reactions, $i = 1,\ldots, N_R(y)$. Note that $N_R(y) = 1$ iff $y$ is a CF-node. Also, a simple cycle of $y$ contains only 1 branching reaction. 
\begin{definition}
	An $i$-cycle of $y$ is a simple cycle whose branching reaction is in ${\rm{CF}}_i(y)$. Let ${\rm{CFC}}_i(y)$ be the set of all $i$-cycles of $y$. ${\mathscr{CFC}}_i(y)$ is the subnetwork of $\mathscr{N}$ defined by the reactions in $i$-cycles.
	If $y$ is an RDK node, then ${\mathscr{CFC}}_1(y)=\mathscr{N}$.
\end{definition}

\begin{proposition}	
	If $\mathscr{N}$ has a single NF-node $y$, then $\{{\mathscr{CFC}}_i(y)\}$ is a non-trivial weakly reverible CF covering of $\mathscr{N}$, called the CF$(y)$-cycle covering.
\end{proposition}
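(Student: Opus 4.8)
The plan is to verify, in turn, the four defining requirements of a non-trivial weakly reversible CF covering: that the subnetworks $\mathscr{CFC}_i(y)$ together cover every reaction of $\mathscr{N}$, that each $\mathscr{CFC}_i(y)$ is complex factorizable, that each is weakly reversible, and that there are at least two of them. Throughout I write $\mathscr{R}(z)$ for the branching reactions of a complex $z$, and I use repeatedly that, since $y$ is the \emph{single} NF-node, every complex $z \neq y$ is a CF-node, so all reactions in $\mathscr{R}(z)$ already carry one common interaction function in $\mathscr{N}$.

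I would dispose of the CF property first, as it is essentially bookkeeping. Fix $i$. Inside $\mathscr{CFC}_i(y)$ the only reactions issuing from $y$ are those branching reactions of $y$ that occur as the distinguished branching reaction of some $i$-cycle; by the definition of an $i$-cycle these all lie in $\mathrm{CF}_i(y)$ and therefore share a single interaction function, so $y$ is a CF-node of the subnetwork. For every other reactant $z$ the reactions of $\mathscr{R}(z)$ retained in $\mathscr{CFC}_i(y)$ inherit the common interaction function they already possess in $\mathscr{N}$. Hence each $\mathscr{CFC}_i(y)$ is complex factorizable.

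Next I would treat weak reversibility and non-triviality together, both of which flow from the observation that every $i$-cycle passes through $y$ (its branching reaction lies in $\mathrm{CF}_i(y) \subseteq \mathscr{R}(y)$). Given two vertices $u,v$ of $\mathscr{CFC}_i(y)$, each lies on an $i$-cycle; traversing one cycle from $u$ to $y$ and the other from $y$ to $v$ yields a directed $u$-to-$v$ walk, and symmetrically a $v$-to-$u$ walk, so $\mathscr{CFC}_i(y)$ is strongly connected and hence weakly reversible. For non-triviality it suffices that each of the $N_R(y) \ge 2$ sets $\mathrm{CF}_i(y)$ produces a non-empty subnetwork: given any branching reaction $y \to y'$, weak reversibility of $\mathscr{N}$ supplies a simple directed path from $y'$ back to $y$, and prepending $y \to y'$ gives a simple cycle through $y$ whose branching reaction is $y \to y'$; thus every reaction of $\mathscr{R}(y)$ sits on some $i$-cycle and each $\mathscr{CFC}_i(y)$ is non-empty.

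The covering property is the crux, and the step I expect to be the main obstacle. What must be shown is that \emph{every} reaction of $\mathscr{N}$ — not merely every reaction leaving $y$, which the previous paragraph already handles — lies on at least one simple cycle through $y$, for then it belongs to $\mathscr{CFC}_i(y)$ with $i$ the CF-subset of that cycle's $y$-branching reaction. The natural attempt is to take an arbitrary reaction $u \to v$, use weak reversibility to obtain a directed path from $v$ to $u$ and hence a simple cycle carrying $u \to v$, and then reroute this cycle so that it also visits $y$. The delicate point is exactly this rerouting: strong connectivity furnishes a path out to $y$ and back, but producing a single \emph{simple} cycle through both $u \to v$ and $y$ is a vertex-disjoint-paths (Menger-type) condition that weak reversibility does not obviously guarantee. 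I would therefore concentrate the real work here, exploiting the single-NF-node and linkage-class structure to force every reaction onto a cycle through $y$, since the remaining three properties are comparatively routine.
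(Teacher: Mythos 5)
Your verifications of the CF property, of weak reversibility, and of non-triviality are correct, and they amount to the paper's own proof with the details filled in: the paper's entire argument reads ``That it is a covering follows directly from the weak reversibility of $\mathscr{N}$. Each subnetwork is a union of cycles, and hence, weakly reversible. It is also CF by construction (cycles from the single NF node).'' Your sharpening of ``union of cycles, hence weakly reversible'' to strong connectivity (every $i$-cycle passes through $y$, so any two vertices of $\mathscr{CFC}_i(y)$ are joined through $y$) and your explicit non-emptiness argument (every reaction of $\mathscr{R}(y)$ closes to a simple cycle through $y$, so all $N_R(y)\ge 2$ subnetworks are non-empty and pairwise distinct) are both sound and go beyond what the paper writes.

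The covering property, which you flag as the crux and leave open, is therefore the one genuine gap in your proposal --- but your suspicion about it is exactly right, and the paper does not close it either: it simply asserts that covering ``follows directly from the weak reversibility of $\mathscr{N}$.'' That assertion is false in the stated generality. What is needed is that every reaction of $\mathscr{N}$ lies on a \emph{simple} cycle through the distinguished node $y$, and strong connectivity does not give this. Concretely, take
$\mathscr{R}=\{R_1\colon y\to A,\ R_2\colon A\to y,\ R_3\colon y\to B,\ R_4\colon B\to y,\ R_5\colon A\to C,\ R_6\colon C\to A\}$
with the kinetic order rows of $R_1,R_3$ distinct (so $y$ is an NF node) and those of $R_2,R_5$ equal (so $A$ is a CF node); then $y$ is the single NF node of this weakly reversible linkage class, yet the only simple cycle containing $R_5$ is $A\to C\to A$, which avoids $y$. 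Hence $R_5$ and $R_6$ lie in no $\mathscr{CFC}_i(y)$, and $\mathscr{CFC}_1(y)\cup\mathscr{CFC}_2(y)=\{R_1,R_2,R_3,R_4\}\ne\mathscr{R}$: the CF$(y)$-cycle covering is not a covering. (This network does admit a weakly reversible CF-decomposition, namely $\{R_1,R_2,R_5,R_6\}\cup\{R_3,R_4\}$, so it is the cycle construction, not the existence question, that breaks.) Thus the step you isolated cannot be completed as stated; the proposition requires either an additional hypothesis --- e.g., that every reaction of $\mathscr{N}$ lies on a simple cycle through $y$, precisely the Menger-type condition you anticipated --- or a weaker conclusion, namely that the $\mathscr{CFC}_i(y)$ cover only the subnetwork of $\mathscr{N}$ generated by the simple cycles through $y$. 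Note that the same unproved claim reappears in the paper's Theorem \ref{main:theorem:1node}, where ``since $\mathscr{N}_i$ is weakly reversible, any reaction is contained in a simple $i$-cycle'' is asserted on the same inadequate grounds, so the issue you found is structural, not an oversight in your reading.
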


\begin{proof}
	That it is a covering follows directly from the weak reversibility of $\mathscr{N}$. Each subnetwork is a union of cycles, and hence, weakly reversible. It is also CF by construction (cycles from the single NF node).
\end{proof}
The covering is not necessarily a decomposition because an $i$-cycle and 
a $j$-cycle where $i \ne j$ can have reactions in common. An example of a non-trivial decomposition is given in Example \ref{ex:schmitz}.

\begin{example}
	Consider the Schmitz's subnetwork $\mathscr{N}'$ in Section \ref{schmitz:running:example}, with reactions restricted to the first 8 rows. We consider this example as our running example. These subsets of $\mathscr{R}$: $\{r_1,r_2,r_3,r_4\}$ and $\{r_5,r_6,r_7,r_8\}$ induce the CF$(M_1)$-cycle decomposition of 
	the subnetwork of Schmitz's carbon cycle model.
	\label{ex:schmitz}	
\end{example}

\subsection{A necessary condition for a weakly reversible CF decomposition of a linkage class with a single NF node}
\label{properties:CF:cycle:dec}
We now introduce the following important necessary condition for a weakly reversible CF-decomposition of a linkage class with a single NF node. This result is general whether Eulerian or non-Eulerian. 
\begin{theorem}
	Let $(\mathscr{N},K)$ be an RIDK system with a single linkage class and a single NF-node $y$. If $\mathscr{N} = \mathscr{N}'_1 \cup \ldots \cup \mathscr{N}'_{k'}$ is a weakly reversible CF-decomposition of $\mathscr{N}$, then it is a refinement of a decomposition of the form $\mathscr{D}: \mathscr{N} = \mathscr{N}_1 \cup \ldots \cup \mathscr{N}_k$, with $k = N_R(y)$, $y \in \mathscr{N}_i$ and $\mathscr{R}(\mathscr{N}_i)$ is the set of reactions of $i$-cycles of $y$ contained in $\mathscr{N}_i$.
	\label{main:theorem:1node}
\end{theorem}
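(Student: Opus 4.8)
The plan is to build the coarsening $\mathscr{D}$ explicitly by grouping the given subnetworks according to the single node $y$, and then to verify the three asserted properties. The heart of the argument is the observation that, because $y$ is the \emph{only} NF-node and each $\mathscr{N}'_j$ is complex factorizable, the branching reactions of $y$ cannot be split arbitrarily among the $\mathscr{N}'_j$.

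First I would establish the key constraint: for each $j$, all reactions in $\mathscr{N}'_j$ having $y$ as reactant lie in a single CF-subset of $y$. Indeed, every node other than $y$ is a CF-node, so its branching reactions already share one kinetic-order row; if $\mathscr{N}'_j$ contained two $y$-reactions from different CF-subsets $\mathrm{CF}_{i}(y)$ and $\mathrm{CF}_{i'}(y)$, their rows in $F$ would differ and $\mathscr{N}'_j$ would be NF at $y$, contradicting that $\mathscr{N}'_j$ is CF. Hence there is a well-defined index $\sigma(j)\in\{1,\dots,N_R(y)\}$ with $\mathscr{R}(\mathscr{N}'_j)\cap\mathscr{R}(y)\subseteq \mathrm{CF}_{\sigma(j)}(y)$ whenever $\mathscr{N}'_j$ meets $y$.

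Next I would define the coarsening. For $i=1,\dots,k$ with $k=N_R(y)$, let $\mathscr{N}_i$ be the subnetwork induced by $\bigcup_{\sigma(j)=i}\mathscr{R}(\mathscr{N}'_j)$, assigning any subnetwork that does not meet $y$ to one of the groups. Since $\{\mathscr{R}(\mathscr{N}'_j)\}$ partitions $\mathscr{R}$, so does $\{\mathscr{R}(\mathscr{N}_i)\}$, hence $\mathscr{D}:\mathscr{N}=\mathscr{N}_1\cup\cdots\cup\mathscr{N}_k$ is a genuine decomposition, and by construction each $\mathscr{R}(\mathscr{N}'_j)$ sits inside a single $\mathscr{R}(\mathscr{N}_{\sigma(j)})$, i.e.\ the given CF-decomposition refines $\mathscr{D}$. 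Because each CF-subset $\mathrm{CF}_i(y)$ is nonempty, its reactions appear in some $\mathscr{N}'_j$ with $\sigma(j)=i$, so $y\in\mathscr{N}_i$ for every $i$ and $\mathscr{D}$ really has $k$ parts; moreover, as each constituent block meeting $y$ is strongly connected through $y$, the union $\mathscr{N}_i$ is again weakly reversible.

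It remains to identify $\mathscr{R}(\mathscr{N}_i)$ with the reactions of the $i$-cycles of $y$ contained in $\mathscr{N}_i$. Here I would use weak reversibility of $\mathscr{N}_i$, so that every reaction of $\mathscr{N}_i$ lies on a simple cycle inside $\mathscr{N}_i$; a simple cycle through $y$ uses exactly one $y$-reaction, which lies in $\mathrm{CF}_i(y)$ by the constraint above, so such a cycle is an $i$-cycle, and conversely every reaction of $\mathrm{CF}_i(y)$ sits on such a cycle. This matching of the $y$-reactions with the indexing of the $i$-cycles is the step I expect to be the main obstacle, since one must exclude the possibility that a reaction of $\mathscr{N}_i$ is carried only by cycles avoiding $y$; it is precisely the single-NF-node hypothesis together with the weak reversibility of the blocks that forces every cycle meeting $y$ to be an $i$-cycle and pins $\mathscr{R}(\mathscr{N}_i)$ to the reactions of the $i$-cycles it contains.
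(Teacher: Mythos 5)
Your proposal is essentially the paper's own proof: the same three moves appear there --- each CF block meeting $y$ draws its $y$-reactions from a single CF-subset (else it would be NF at $y$), the blocks are then lumped by that index with blocks avoiding $y$ assigned to some part containing $y$, and the identification of $\mathscr{R}(\mathscr{N}_i)$ with the reactions of $i$-cycles is extracted from weak reversibility of $\mathscr{N}_i$. The step you honestly flag as the main obstacle --- excluding a reaction of $\mathscr{N}_i$ that lies only on cycles avoiding $y$ --- is dispatched in the paper by the same one-line appeal (``since $\mathscr{N}_i$ is weakly reversible, any reaction is contained in a simple $i$-cycle''), so your attempt is faithful to, and no less complete than, the published argument.
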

\begin{proof}
	Since $\mathscr{N} = \mathscr{N}'_1 \cup \ldots \cup \mathscr{N}'_{k'}$ is a CF-decomposition, in the reaction set of each subnetwork containing $y$, all branching reactions must come from only 1 CF-subset. Furthermore, there may be subnetworks not containing $y$. Hence, $k'\ge N_R(y)$. We coarsen the decomposition, which can clearly be done in several ways, by lumping, i.e., taking the union of all subnetworks with branching reactions from the same CF$_i$ and further lumping subnetworks not containing $y$ with one which contains $y$. We obtain a  weakly reversible CF-decomposition with $N_R(y)$ subnetworks (CF$_i$ contained in its reaction set) such that $y$ is in its set of complexes. Since $\mathscr{N}_i$ is weakly reversible, any reaction is contained in a simple $i$-cycle. Conversely, if an $i$-cycle of $y$ is in $\mathscr{N}_i$, by definition, all of its reactions are reactions in $\mathscr{N}_i$. Thus, if ${\mathscr{CFC}}_i^D(y)$ denotes the subnetwork generated by the reactions of $i$-cycles of $y$ contained in $\mathscr{N}_i$, we have $\mathscr{N}_i = {\mathscr{CFC}}_i^D(y)$.
\end{proof}
The necessary condition also suggests the following procedure for generating a weakly reversible CF-decomposition in a single linkage class and single NF node RIDK system:
\begin{enumerate}
	\item[1.]	Choose a CF-subset of the NF-node $y$, denote it with  CF$_1(y)$. For each branching reaction in CF$_1$, choose a simple cycle of $y$. Let $\mathscr{N}_{0,1}$ be the subnetwork generated by the set $\mathscr{R}_{0,1}$ of reactions of these cycles.
	\item[2.]	If $\mathscr{R} \backslash \mathscr{R}_{0,1} \ne \varnothing$ and the subnetwork generated by $\mathscr{R} \backslash \mathscr{R}_{0,1}$ is not weakly reversible, then the system has no weakly reversible CF-decomposition.
	\item[3.]	Otherwise, for another CF subset, CF$_2(y)$, choose simple cycles which do not contain any reactions from $\mathscr{R}_{0,1}$. If there is a branching reaction in CF$_2(y)$ for which this is not possible, then the system does not have a weakly reversible CF-decomposition. Otherwise, let $\mathscr{N}_{0,2}$ be the subnetwork generated by the set $\mathscr{R}_{0,2}$ of reactions of these cycles.
	\item[4.]	If $\mathscr{R} \backslash (\mathscr{R}_{0,1} \cup \mathscr{R}_{0,2}) \ne \varnothing$ and the subnetwork generated by $\mathscr{R} \backslash (\mathscr{R}_{0,1} \cup \mathscr{R}_{0,2})$ is not weakly reversible, then the system does not have a weakly reversible CF-decomposition.
	\item[5.]	Continue accordingly until all CF subsets for $1,\ldots,N_R(y)$ are covered. If the complement of $\cup \mathscr{R}_{0,i}$ is not empty, find for each reaction a cycle of $y$ with the property that if the cycle's branching reaction belongs to CF$_j$, no reactions from $\cup \mathscr{R}_{0,i}$, $i \ne j$ are contained in the cycle. If there is an instance for which this is not possible, then the system does not have a weakly reversible CF-decomposition. Otherwise, add the cycles to the subnetworks with the same CF$_j$.
\end{enumerate}

The last step of the procedure defines an appropriate weakly reversible CF-decomposition with $N_R(y)$ subnetworks, $y$ in each subnetwork and by construction, the reactions of each subnetwork as those of the $i$-cycles contained in the subnetwork. Because of the many choices made in the procedure, this decomposition is not unique. We provide Algorithm 
\ref{algorithm0} for this procedure.

\begin{algorithm}
	\SetAlgoLined
	\bigskip
	{\bf{STEP 1}} 
	\\
	{\bf{Input 1}} {reaction network $\mathscr{N}$ with reaction set $\mathscr{R}$}\\
	{\bf{Input 2}} {kinetic order matrix $F$}\\
	\bigskip
	{\bf{STEP 2}} 
	\\
	$CF=$ set of CF subsets\\
	$|CF|=$ number of CF subsets\\
	\bigskip
	{\bf{STEP 3}} 
	\\
	$UC0 = \varnothing$\\
	\For{$\beta = 1$ to $|CF|$}{
		\For{$\rho \in CF_{\beta}(y)$}
		{choose a simple cycle $c(y)$ of $y$
			
		}
		$\mathscr{R}_{0,\beta} =$ set of reactions of all these cycles\\
		$UC=\mathscr{R}_{0,\beta} \cup UC0$ set of reactions of all these cycles\\
		\eIf{$\mathscr{R} \backslash UC \ne \varnothing$ and the subnetwork generated by $\mathscr{R} \backslash UC$ is not weakly reversible}{
			the NF system has no weakly reversible CF-decomposition\\
			EXIT the algorithm
		}{
		}
		$UC0=\mathscr{R}_{0,\beta}$\\
	}
	
	\bigskip
	{\bf{STEP 4}} {}
	\\
	\eIf{$\mathscr{R} \backslash UC = \varnothing$}{
		NF system has a weakly reversible CF-decomposition 		
	}{
		$RY = \varnothing$\\
		$RYS = (\mathscr{R} \backslash UC) \backslash RY$\\
		\For{$r \in RYS$}
		{
			\eIf{there is a cycle $r(y)$ of $y$ containing $r$ such that its branching reaction belongs to CF$_j$ and no reactions from
				$\bigcup_{i \ne j} {\left( {{\mathscr{R}_{0,i}}} \right)}$ are contained in $r(y)$}{
				add $r(y)$ to subnetwork with reaction set $\mathscr{R}_{0,\beta}$ and with the same CF$_\beta$\\
				$RY=$ set of all reactions in $r(y)$\\
				
			}{the NF system has no weakly reversible CF-decomposition}
			
		}
	}
	\bigskip
	{\bf{OUTPUT}} 
	\\
	{weakly reversible CF-decomposition if it exists}\\
	\bigskip	
	\caption{An algorithm to find a weakly reversible CF-decomposition of an NF system with a single linkage class and a single NF node}
	\label{algorithm0}
\end{algorithm}

\begin{example}
	In the Schmitz's model subnetwork in Example \ref{ex:schmitz}, it is easy to check that the CF-cycle subnetwork covering is the CF-decomposition.
	\label{ex:schmitz:WR:dec}
\end{example}

\begin{example}
	Consider the CRN in Figure \ref{fig:Example2WRdec}.
	\begin{figure}
		\begin{center}
			\includegraphics[width=22cm,height=7cm,keepaspectratio]{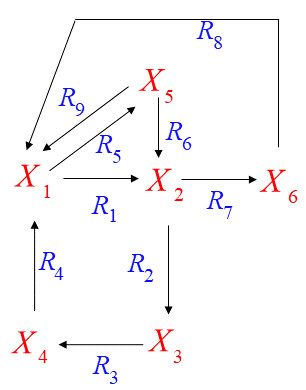}
			\caption{Reaction Network in Example \ref{ex:Example2WRdec}}
			\label{fig:Example2WRdec}
		\end{center}
	\end{figure}
	The complex $X_1$ is an NF node with CF-subsets CF$_1(X_1)=\{R_1\}$ and $\{R_5\}$. This linkage class is not Eulerian since 
	$\deg_-(X_1) = 3$ and $\deg_+(X_1) = 2$. On the other hand,
	$CFC_1(X_1)$ contains the cycles: $$\{R_1,R_2,R_3,R_4\} {\text{ and }} \{R_1,R_7, R_8\}.$$
	In addition, $CFC_2(X_1)$ contains the cycles $$\{R_5,R_6,R_7,R_8\}, \{R_5,R_6, R_2,R_3,R_4\} {\text{ and }} \{R_5,R_9\}.$$
	We have $\mathscr{CFC}_1(X_1)=\mathscr{N}_{0,1}$ and $\mathscr{CFC}_2(X_1)=\mathscr{N}_{0,2}$ as given in Figure \ref{fig:Example2cfc}.
	\begin{figure}
		\begin{center}
			\includegraphics[width=27cm,height=9cm,keepaspectratio]{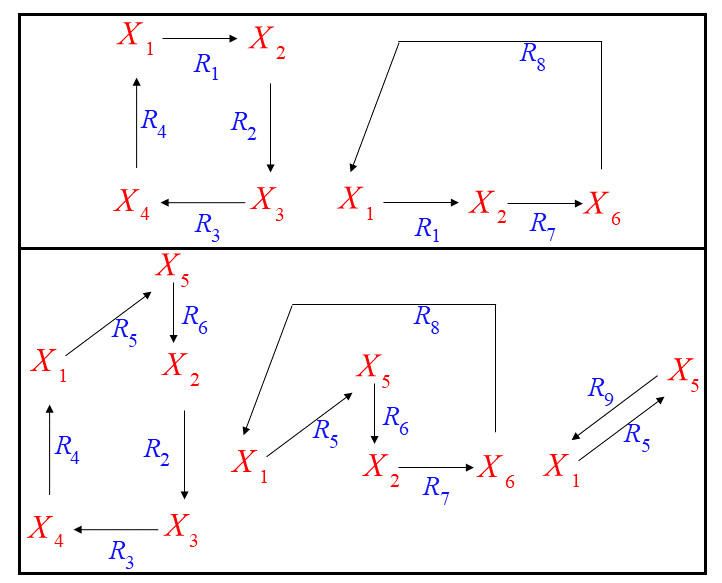}
			\caption{Subnetworks $\mathscr{CFC}_1(X_1)=\mathscr{N}_{0,1}$ on the upper portion and $\mathscr{CFC}_2(X_1)=\mathscr{N}_{0,2}$ on the lower portion in Example \ref{ex:Example2WRdec}}
			\label{fig:Example2cfc}
		\end{center}
	\end{figure}
	The weakly reversible CF-decomposition of the network is given by $$\{R_1,R_2,R_3,R_4\} {\text{ and }} \{R_5,R_6,R_7,R_8, R_9\}$$ consisting of a cycle and the union of two cycles.
	\label{ex:Example2WRdec}
\end{example}

\section{An algorithm for determining the existence of a weakly reversible CF-decomposition of an NF system}
\label{sec:algo:wr}

We generalize Algorithm \ref{algorithm0} of obtaining a weakly reversible CF-decomposition of an NF system for a finite number of NF nodes with the following steps and in Algorithm \ref{algorithm1}.

\begin{enumerate}
	\item We input the reaction network and the kinetic order matrix.
	\item We identify the linkage classes from the given network.
	\item For each linkage class $\mathscr{L}_\alpha$, where $\alpha = 1,2, \ldots, \ell$, check if there is an NF-node.
	If there is no NF-node then the linkage class is the trivial decomposition that is weakly reversible. If there is at least one NF-node, then let $RC$ be the set of all NF-nodes.
	\item For $\theta=1,2,\ldots,|RC|$, treat the $\theta$th NF-node as the only NF-node $y$ in this step. We then use Algorithm \ref{algorithm0} of finding a weakly reversible CF-decomposition with a single NF node. We repeat STEP 3 until all the linkage classes are exhausted.
\end{enumerate}

\begin{algorithm}
	\SetAlgoLined
	\bigskip
	{\bf{STEP 1}} {Inputs}\\
	{\bf{Input 1}} {reaction network $\mathscr{N}$}\\
	{\bf{Input 2}} {kinetic order matrix $F$}\\
	\bigskip
	
	{\bf{STEP 2}} {Identify the linkage classes
		$\mathscr{L}_\alpha$ for $\alpha = 1,2, ... , \ell$.}\\
	
	\bigskip
	{\bf{STEP 3}} {Compute for the set $BN$ of branching nodes and the set $RC$ of NF nodes}\\
	\For{$\alpha = 1$ to $\ell$}{
		$r(x) =$ set of all reactions in $\mathscr{L}_\alpha$ with $x$ as reactant complex\\
		$BN=\{x : |r(x)|  > 1\}$\\
		$RC = \{ x \in BN:{\text{at least one kinetic order value associated to}}$\\
		${\text{the branching reactions are different}}\} 
		$\\
		\eIf{$|RC|=0$}{
			$\mathscr{L}_\alpha$ is included for possible weakly reversible decomposition
		}{
			proceed to STEP 4
		}
	}
	\bigskip
	{\bf{STEP 4}} {Analysis for at least one NF node}\\
	\For{$\theta = 1$ to $|RC|$}{
		just in this step, treat the $\theta$th NF node as the only NF node in this particular iteration\\
		for $\theta \ge 2$, proceed if the network was not decomposed into CF-subsystems with respect to $\theta$th NF-node resulted from the previous or $\theta -1$ iteration\\
		proceed to STEPS 2, 3, and 4 of Algorithm \ref{algorithm0}\\
		update the decomposition for this iteration which is treated with single NF-node
	}
	
	\bigskip
	{\bf{OUTPUT}} 
	\\
	{weakly reversible CF-decomposition of the NF system, if it exists}\\
	\bigskip
	\caption{How to find a weakly reversible CF-decomposition of an NF system, if it exists}
	\label{algorithm1}
\end{algorithm}

\begin{example}
	Consider the reaction network in Figure \ref{pic:algoex1}. The kinetic orders of the branching reactions are indicated on the arrows. We can easily identify which of the branching nodes are NF nodes and CF nodes.
	
	\begin{figure}
		\begin{center}
			\includegraphics[width=22cm,height=7cm,keepaspectratio]{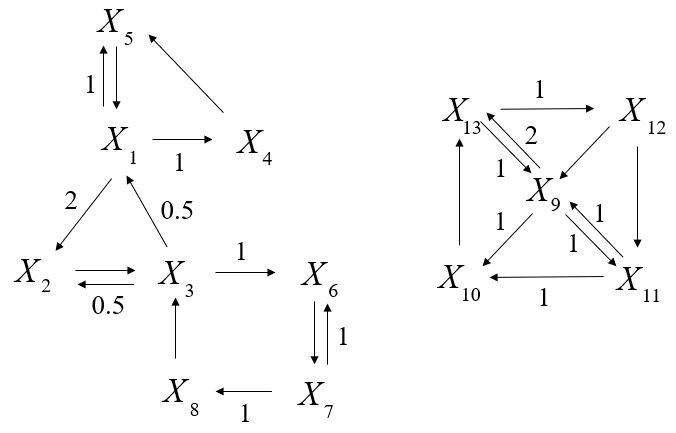}
			\caption{Network in Example \ref{ex:algoex1}}
			\label{pic:algoex1}
		\end{center}
	\end{figure}
	
	The algorithm is run for each linkage class. Firstly, consider the linkage class $\mathscr{L}_1$ on the left. The NF nodes are $X_1$ and $X_3$.
	For $X_1$, there are 2 CF-subsets. The first one contains $X_1 \to X_4$ and $X_1 \to X_5$. The second contains $X_1 \to X_2$. Hence, we have the weakly reversible decomposition in Figure \ref{pic:algoex1:dec1} treating $X_3$ as a CF-node in this iteration:

	\begin{figure}
		\begin{center}
			\includegraphics[width=12cm,height=4cm,keepaspectratio]{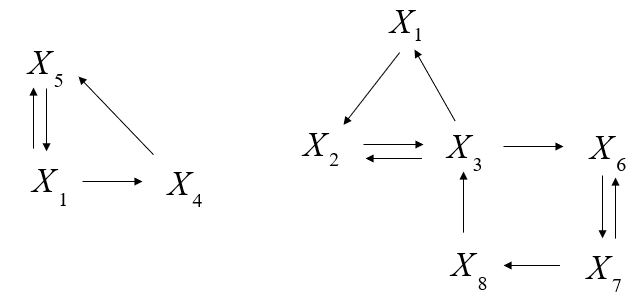}
			\caption{Decomposition of the left linkage class with respect to node $X_1$ in Example \ref{ex:algoex1}}
			\label{pic:algoex1:dec1}
		\end{center}
	\end{figure}
	
	We proceed with the next NF node $X_3$. There are 2 CF-subsets. The first has the reaction $X_3 \to X_6$ while the other CF-subset has the reaction $X_3 \to X_2$. We have already exhausted all NF-nodes. We have a weakly reversible RDK decomposition of $\mathscr{L}_1$ in Figure \ref{pic:algoex1:dec2}.
	
	\begin{figure}
		\begin{center}
			\includegraphics[width=12cm,height=4cm,keepaspectratio]{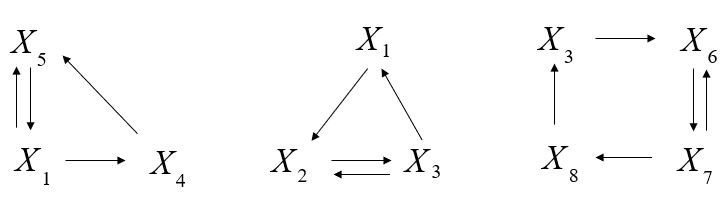}
			\caption{Decomposition of the left linkage class with respect to node $X_1$ and then node $X_3$ in Example \ref{ex:algoex1}}
			\label{pic:algoex1:dec2}
		\end{center}
	\end{figure}
	
	We now consider the second linkage class $\mathscr{L}_2$. The only NF node is $X_9$. We obtain a weakly reversible decomposition of $\mathscr{L}_2$ in Figure \ref{pic:algoex1:dec3}.
	
	\begin{figure}
		\begin{center}
			\includegraphics[width=10cm,height=3.5cm,keepaspectratio]{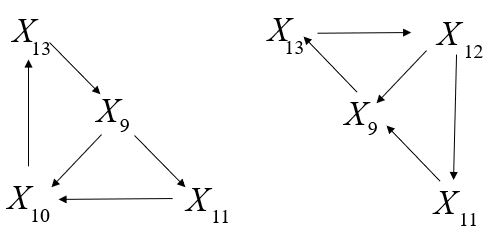}
			\caption{Decomposition of the right linkage class with respect to node $X_9$ in Example \ref{ex:algoex1}}
			\label{pic:algoex1:dec3}
		\end{center}
	\end{figure}
	
	Thus, we have a weakly reversible decomposition of the whole network in Figure \ref{pic:algoex1:decfinal}.

	\begin{figure}
		\begin{center}
			\includegraphics[width=21cm,height=7cm,keepaspectratio]{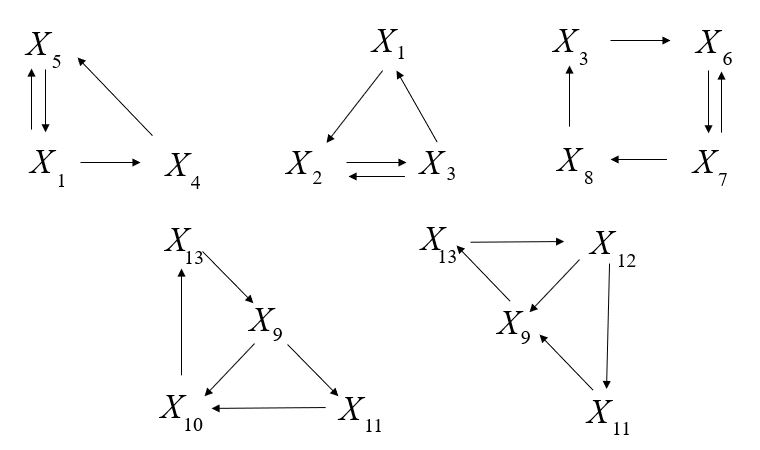}
			\caption{A weakly reversible CF-decomposition in Example \ref{ex:algoex1}}
			\label{pic:algoex1:decfinal}
		\end{center}
	\end{figure}
	
	\label{ex:algoex1}
	
\end{example}

\begin{example}
	Consider the CRN in Figure \ref{pic:algoex2} with indicated kinetic order values for the branching reactions. There is only one linkage class.
	For the NF node $X_1$, there are two CF subsets, say CF$_1(X_1)$ that has a cycle of two reactions and CF$_2(X_1)$ that has a cycle of five reactions.
	Since it is not possible to choose a cycle in CF$_1$ which has no common reaction with CF$_2$, a CF-decomposition is not possible.
	Indeed, we can check in STEP 3 of Algorithm \ref{algorithm0}, which is inside STEP 4 of Algorithm \ref{algorithm1} that 
	the system has no weakly reversible CF-decomposition.  
	
	\begin{figure}
		\begin{center}
			\includegraphics[width=9cm,height=4cm,keepaspectratio]{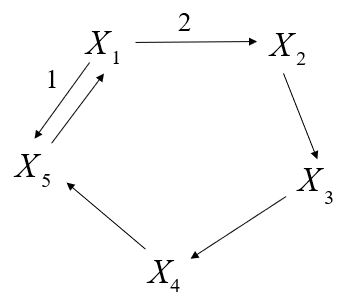}
			\caption{Network in Example \ref{ex:algoex2}}
			\label{pic:algoex2}
		\end{center}
	\end{figure}	
	\label{ex:algoex2}	
\end{example}

\section{Method for constructing independent weakly reversible CF-decompositions of an NF system}
\label{sec:construct:independent}
In this section, we present how we can determine weakly reversible CF-decompositions of an NF system that are independent. Before we proceed with our results, we recall the following discussion in the work of Hernandez and De la Cruz \cite{hernandez:delacruz1}, which gives a way of finding independent decompositions. One reason why we want to find independent decompositions is due to the Feinberg Decomposition Theorem (Theorem \ref{feinberg:decom:thm}), as this result provides equality between the set of equilibria of the whole network and the intersection of the sets of equilibria of the whole network, for network with independent decompositions.

\begin{definition}
	\label{def:coordinate:graph}
	Let ${\bf R}=\{{\bf R}_1,\ldots, {\bf R}_m\}$ be a set of vectors such that the span of ${\bf R}$ is of dimension $p$. Suppose that $\{{\bf R}_1,\ldots,{\bf R}_p\} \subseteq {\bf R}$ is linearly independent.
	The {\bf coordinate graph} of ${\bf R}$ is the graph $G=(V,E)$ with vertex set $V=\{v_1,\ldots, v_p\}$ and edge set $E$ such that
	$(v_i,v_j)$ is an edge in $E$ if and only if there exists $k >p$ with ${\bf R}_k=\displaystyle \sum_{j=1}^p a_j {\bf R}_j$, and both $a_i$ and $a_j$ are nonzero.
\end{definition}

Recall that a graph $G$ is connected if each pair of vertices is joined by a path in $G$. In addition, a connected component of $G$ is a maximal connected subgraph of $G$, and that $G$ is connected if and only if it has exactly
one connected component. In addition, there is no path that connects two vertices in different components.

The following is the main theorem in \cite{hernandez:delacruz1}, which is the basis of the method in the paper to find independent decompositions of reaction networks.

\begin{theorem} Let  ${\bf R}$ be a finite set of vectors. A nontrivial independent decomposition of ${\bf R}$ exists if and only if 
	the coordinate graph of ${\bf R}$ is not connected. 
\end{theorem}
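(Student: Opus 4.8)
The plan is to prove both implications of the equivalence by working in the fixed basis $\{{\bf R}_1,\ldots,{\bf R}_p\}$ and exploiting a single structural observation. For each dependent vector ${\bf R}_k$ (with $k>p$), written as ${\bf R}_k=\sum_{j=1}^p a_j^{(k)}{\bf R}_j$, define its support $\sigma(k):=\{\,j: a_j^{(k)}\ne 0\,\}$. By the very definition of the coordinate graph, every pair of indices in $\sigma(k)$ is joined by an edge, so $\sigma(k)$ is a clique of $G$ and is therefore contained in a single connected component. This observation is what links the combinatorics of $G$ to the linear algebra of the decomposition, and both directions flow from it. Throughout I assume, as holds for reaction vectors, that no element of ${\bf R}$ is the zero vector; I return to this point at the end.

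First I would establish sufficiency of disconnection. Suppose $G$ has components $C_1,\ldots,C_t$ with $t\ge 2$. Let ${\bf R}^{(\ell)}$ consist of the basis vectors ${\bf R}_j$ ($j\le p$) with $v_j\in C_\ell$, together with every dependent vector ${\bf R}_k$ whose support $\sigma(k)$ lies in $C_\ell$. By the observation each $\sigma(k)$ lies in exactly one component, so $\{{\bf R}^{(\ell)}\}$ is a genuine partition of ${\bf R}$. Every vector in ${\bf R}^{(\ell)}$ lies in $\mathrm{span}\{{\bf R}_j: v_j\in C_\ell\}$, and those basis vectors themselves belong to ${\bf R}^{(\ell)}$, so $\mathrm{span}({\bf R}^{(\ell)})=\mathrm{span}\{{\bf R}_j: v_j\in C_\ell\}$. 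Since the $C_\ell$ partition a linearly independent set, these spans form a direct sum summing to $\mathrm{span}({\bf R})$, giving a nontrivial independent decomposition because $t\ge 2$.

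For the converse I would argue directly that a nontrivial independent decomposition forces $G$ to be disconnected. Given ${\bf R}={\bf R}^{(1)}\sqcup\cdots\sqcup{\bf R}^{(t)}$ with $V_\ell:=\mathrm{span}({\bf R}^{(\ell)})$ and $\mathrm{span}({\bf R})=\bigoplus_\ell V_\ell$, let $\pi_\ell$ be the projection onto $V_\ell$ along the other summands, and let $\pi(i)$ index the part containing ${\bf R}_i$. The key computation is to apply $\pi_\ell$ with $\ell\ne\pi(k)$ to ${\bf R}_k=\sum_j a_j^{(k)}{\bf R}_j$: the left side vanishes since ${\bf R}_k\in V_{\pi(k)}$, while the right side collapses to $\sum_{j:\,\pi(j)=\ell}a_j^{(k)}{\bf R}_j$, so linear independence of the basis forces $a_j^{(k)}=0$ whenever $\pi(j)\ne\pi(k)$. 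Hence $\sigma(k)\subseteq\{j:\pi(j)=\pi(k)\}$, which means every edge of $G$ joins two vertices in the same block $P_\ell:=\{j\le p:\pi(j)=\ell\}$, so the $P_\ell$ give a partition of $V(G)$ with no crossing edges.

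It remains to check that at least two blocks $P_\ell$ are nonempty, and this is the step requiring the most care. From $V_\ell\subseteq\mathrm{span}\{{\bf R}_j: j\in P_\ell\}$ we get $d_\ell:=\dim V_\ell\le|P_\ell|$, and summing gives $p=\sum_\ell d_\ell\le\sum_\ell|P_\ell|=p$, so $d_\ell=|P_\ell|$ for every $\ell$. If only one block were nonempty, every other part would satisfy $V_\ell=\{0\}$ and hence consist solely of zero vectors, contradicting the assumption that ${\bf R}$ has no zero vector; therefore at least two blocks are nonempty and $G$ is disconnected. The main obstacle is exactly this combination of the projection identity with the dimension bookkeeping that rules out a degenerate block, and the zero-vector hypothesis is genuinely needed here, since otherwise one could split off a singleton $\{\mathbf 0\}$ and obtain a nontrivial independent decomposition even when $G$ is connected.
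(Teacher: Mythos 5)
Your proof is correct. Note that the paper itself does not prove this theorem: it is quoted from Hernandez and De la Cruz \cite{hernandez:delacruz1}, and the paper only offers a remark sketching the idea, namely that the connected components of the coordinate graph induce the partition of the reaction (vector) set. Your sufficiency direction is exactly that construction, with the one needed observation (each support $\sigma(k)$ is a clique, hence lives in a single component) made explicit, so the spans attached to distinct components meet only in $\{0\}$ and sum to $\mathrm{span}({\bf R})$. Your necessity direction, which the paper's remark does not address at all, is also sound: projecting the basis expansion of each dependent vector onto one summand of the direct sum kills all coefficients belonging to other parts, so no edge of $G$ crosses the blocks $P_\ell$, and the dimension count $\sum_\ell \dim V_\ell = p = \sum_\ell |P_\ell|$ together with the nonemptiness of the parts forces at least two nonempty blocks. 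Your closing caveat is also well taken: the statement genuinely requires that ${\bf 0}\notin{\bf R}$ (otherwise $\{\mathbf 0\}$ splits off as a degenerate independent part regardless of the connectivity of $G$), and this hypothesis is automatic in the intended setting since reaction vectors $C_j - C_i$ are nonzero.
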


\begin{remark}
	In the proof of this theorem, the reactions of the network correspond to their reaction vectors, and the reaction network is being converted into coordinate graph. The idea is decomposing the original reaction network into subnetworks, which is induced by the partition that happens in the coordinate graph. This result can be used to get the ``finest'' independent decomposition (i.e., you can no longer decompose it further into independent decomposition with a greater number of subnetworks). It can be deduced that the only independent decompositions are the finest decomposition and the coarsenings of this finest independent decomposition \cite{amistas}.
\end{remark}

We now have the following result:

\begin{proposition}
	An NF system has an independent weakly reversible CF-decomposition if and only if the coarsest weakly reversible CF-decomposition is independent.
\end{proposition}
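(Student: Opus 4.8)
The plan is to treat the biconditional as two implications, with the reverse implication trivial and all the difficulty concentrated in the forward one. For the reverse implication, I would note only that a coarsest weakly reversible CF-decomposition is in particular a weakly reversible CF-decomposition, so that its independence immediately yields an independent weakly reversible CF-decomposition of the system; nothing else is required.

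For the forward implication, suppose the system admits some independent weakly reversible CF-decomposition $\mathscr{D}$. The driving tool is Proposition~\ref{prop:inde:coarse}, which guarantees that every coarsening of an independent decomposition is again independent. Accordingly, I would coarsen $\mathscr{D}$, staying inside the class of weakly reversible CF-decompositions, until it can no longer be coarsened, reaching a coarsest weakly reversible CF-decomposition $\mathscr{D}^{*}$; by Proposition~\ref{prop:inde:coarse} this $\mathscr{D}^{*}$ inherits independence. To make the coarsening systematic I would first invoke the opening Proposition of Section~\ref{sec:wr:decom} to reduce to a single weakly reversible linkage class, and then Theorem~\ref{main:theorem:1node} to fix, at each NF-node $y$, the minimal block count $N_R(y)$ compelled by the CF condition; this pins down what ``coarsest'' can mean here.

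The hard part will be controlling the coarsening so that it never leaves the class: lumping two blocks can violate the CF property, if it forces two distinct CF-subsets of a shared NF-node into one subnetwork, and it can violate weak reversibility, if the merged block acquires a reaction that lies in no cycle of that block. I would therefore argue that the passage from $\mathscr{D}$ to $\mathscr{D}^{*}$ only lumps blocks carrying the same CF-subset at each NF-node and only along complete $i$-cycles of $y$, exactly the lumping construction used in the proof of Theorem~\ref{main:theorem:1node}, so that both properties survive at each step. A subsidiary point to settle is the well-definedness of ``the'' coarsest decomposition: since distinct maximal weakly reversible CF-decompositions may be incomparable, I would either establish a genuine coarsest decomposition dominating all others in the present setting, or read the statement as asserting the existence of an independent coarsest decomposition, which is precisely what the coarsening of $\mathscr{D}$ produces. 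Once this monotone coarsening inside the class is secured, Proposition~\ref{prop:inde:coarse} closes the proof.
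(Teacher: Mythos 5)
Your proposal is correct and takes essentially the same route as the paper: the reverse direction is dismissed as immediate, and the forward direction is precisely an application of Proposition~\ref{prop:inde:coarse} (coarsenings of independent decompositions are independent) to pass from the given independent weakly reversible CF-decomposition to the coarsest one. The paper's own proof consists of exactly this two-line argument---silently assuming that the coarsest weakly reversible CF-decomposition exists as a coarsening of the given one---so your extra scaffolding (reduction to linkage classes via Theorem~\ref{main:theorem:1node}, cycle-wise lumping, and the discussion of possibly incomparable maximal decompositions) is legitimate added care rather than a different method; note only that your worry that lumping two blocks can destroy weak reversibility is unfounded, since every reaction remains in a cycle of its original block after the merge.
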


\begin{proof}
	The right to left direction is obvious. For the other direction, suppose the NF system has an independent weakly reversible CF-decomposition. The conclusion follows from the fact that the coarsening of an independent decomposition is also independent.
\end{proof}

The coarsest is easier to identify using the methods of finding weakly reversible and independent decomposition. We can use the definition to check if the coarsest weakly reversible CF-decomposition is independent.

\begin{example}
	Consider the Schmitz's subnetwork in Example \ref{ex:schmitz}. The coarsest weakly reversible CF-decomposition induced by the partition 
	\[\left\{ {\left\{ {{R_1},{R_2},{R_3},{R_4}} \right\},\left\{ {{R_5},{R_6},{R_7},{R_8}} \right\}} \right\}\]
	was given in Example \ref{ex:schmitz:WR:dec}. The sum of the ranks of the two subnetworks is the same as the rank of the network, i.e., $2+3=5$. By definition, the decomposition is independent. Therefore, the decomposition is an independent weakly reversible CF-decomposition.
\end{example}

\section{Positive equilibria of power law systems with a weakly reversible PL-RDK decomposition of PLP type}
\label{sec:existence:parametrization}
In this Section, we demonstrate how the availability of weakly reversible CF-decompositions leads to new results on equilibria for NF systems in the case of  power law systems. We derive a broad generalization of the Deficiency Zero Theorem of Fortun et al. \cite{fortun2} to systems with positive deficiency, thus showing that the essential property for the result in \cite{fortun2} is the availability of a particular decomposition and not the deficiency value. The generalization was originally formulated only for PL-NDK systems, but we then observed that the proof did not use the NDK property and that, when asserted more generally for power law systems, special cases coincided with those of a theorem of B. Boros for mass action systems \cite{BORO2013} and a theorem of Talabis et al. about PL-TIK ($\widehat{T}$-rank maximal kinetics) systems \cite{TAM2018}. For details about PL-TIK, we refer the reader to pages 371-372 of \cite{TAM2018}. We also formulate the corresponding Theorem for the set of complex balanced equilibria of power law systems.

\subsection{The reaction network of kinetic complexes $\widetilde{\mathscr{N}}$ of a PLK system $(\mathscr{N}, K)$}

S. M\"uller and G. Regensburger introduced the concepts of ``kinetic order subspace'' and ``kinetic deficiency'' for cycle terminal PL-RDK systems in the context of their theory of generalized mass action systems (GMAS) in two papers in 2012 and 2014 \cite{MURE2014}. In this Section, we present an extension of the concepts to cycle terminal PLK systems, which for the subset of PL-RDK systems coincide with the M\"uller-Regensburger definition. We use the term ``kinetic flux subspace'' instead of their ``kinetic order subspace''.

We denote with $\mathscr{R}(y)$ the set of (branching) reactions of $y$ and $| \mathscr{R}(y) | = \deg_+(y)$. Also, $\mathscr{R}_i(y)$ stands for a CF-subset of $y$, $i = 1,..., N_R(y)$.

\begin{definition}
	If $y$ is a complex of a cycle terminal PLK system $(\mathscr{N}, K)$, then $\mathscr{\widetilde{C}}(y) := \{ F_q | q \in \mathscr{R}(y)\}$
	is the set of kinetic complexes of $y$, and $\widetilde{y}$ denotes an element of the set. For any reaction $q: y \to y'$, $$\mathscr{\widetilde{R}}(q) := \{ \widetilde{y} \to \widetilde{y'}|
	\widetilde{y} \in \mathscr{\widetilde{C}}(y), \widetilde{y'} \in \mathscr{\widetilde{C}}(y')\}$$ is the set of kinetic complex reactions of $q$, and $\widetilde{q}$ denotes an element of the set.
\end{definition}

The following Proposition shows some basic properties of these sets:

\begin{proposition}
	Let (N, K) be a cycle terminal PLK system. Then
	\begin{enumerate}
		\item[i.] For each complex $y$, $| \mathscr{\widetilde{C}}(y) | = N_R(y)$.
		\item[ii.] For each reaction $q: y \to y'$, $| \mathscr{\widetilde{R}}(q) |= N_R(y) N_R(y') - | \mathscr{\widetilde{C}} (y) \cap \mathscr{\widetilde{C}} (y')|$.
	\end{enumerate}
	\label{cycleterminal:formulas}
\end{proposition}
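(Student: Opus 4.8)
The plan is to treat the two statements separately: statement i follows essentially by unwinding the definition of a CF-subset for power law kinetics, while statement ii reduces to an elementary count of ordered pairs of kinetic complexes with the forbidden diagonal removed. I would first note that the cycle terminal hypothesis ($n_r = n$, i.e.\ the reactant map is surjective) guarantees $\mathscr{R}(y) \neq \varnothing$ for every complex $y$, so that $\mathscr{\widetilde{C}}(y)$ and $N_R(y)$ are well defined for each $y$; this is exactly why the hypothesis is imposed.

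For statement i, the key point is that, for power law kinetics, two branching reactions $q, q' \in \mathscr{R}(y)$ lie in the same CF-subset if and only if their interaction functions $x^{F_q}$ and $x^{F_{q'}}$ coincide on $\mathbb{R}^m_{>0}$, which happens exactly when $F_q = F_{q'}$. Hence the assignment $q \mapsto F_q$ is a surjection $\mathscr{R}(y) \to \mathscr{\widetilde{C}}(y)$ whose fibers are precisely the CF-subsets $\mathscr{R}_i(y)$. This induces a bijection between the collection of CF-subsets of $y$ and the set $\mathscr{\widetilde{C}}(y)$, giving $|\mathscr{\widetilde{C}}(y)| = N_R(y)$.

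For statement ii, I would count the arrows in $\mathscr{\widetilde{R}}(q)$ for a reaction $q : y \to y'$. Since $\mathscr{\widetilde{C}}(y)$ and $\mathscr{\widetilde{C}}(y')$ are sets of distinct kinetic complexes, the assignment $(\widetilde{y}, \widetilde{y'}) \mapsto (\widetilde{y} \to \widetilde{y'})$ from the Cartesian product $\mathscr{\widetilde{C}}(y) \times \mathscr{\widetilde{C}}(y')$ to ordered pairs is injective, so by statement i there are $|\mathscr{\widetilde{C}}(y)|\,|\mathscr{\widetilde{C}}(y')| = N_R(y) N_R(y')$ candidate arrows. However, a pair with $\widetilde{y} = \widetilde{y'}$ is a self-loop, which is forbidden in a CRN by the axiom $(C_i, C_i) \notin \mathscr{R}$, and must be discarded. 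A candidate is a self-loop exactly when $\widetilde{y} = \widetilde{y'}$ is a common element, and each $z \in \mathscr{\widetilde{C}}(y) \cap \mathscr{\widetilde{C}}(y')$ contributes exactly one such pair $(z,z)$; thus subtracting the $|\mathscr{\widetilde{C}}(y) \cap \mathscr{\widetilde{C}}(y')|$ self-loops from the $N_R(y) N_R(y')$ candidates yields the formula.

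The computation is routine, so the only points needing care are conceptual. First, in statement i one must justify that the CF-subset partition coincides with the fibers of $q \mapsto F_q$, i.e.\ that equality of interaction functions is equivalent to equality of kinetic order rows; this is precisely where the power law form enters. Second, in statement ii one must make explicit that $\mathscr{\widetilde{R}}(q)$ is read as a set of genuine reactions, so that the diagonal of $\mathscr{\widetilde{C}}(y) \times \mathscr{\widetilde{C}}(y')$ is excluded, and then identify that excluded diagonal with the intersection $\mathscr{\widetilde{C}}(y) \cap \mathscr{\widetilde{C}}(y')$ — this identification is the heart of the formula. I do not anticipate a substantive obstacle beyond making these two identifications precise.
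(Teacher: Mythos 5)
Your proof is correct and follows the same route as the paper, whose entire proof is ``This follows directly from the definitions''; you have simply made explicit the two identifications that proof leaves implicit, namely that the fibers of $q \mapsto F_q$ are exactly the CF-subsets of $y$ (so $|\mathscr{\widetilde{C}}(y)| = N_R(y)$) and that the diagonal pairs correspond to $\mathscr{\widetilde{C}}(y) \cap \mathscr{\widetilde{C}}(y')$. Your reading of $\mathscr{\widetilde{R}}(q)$ as excluding self-loops is confirmed by the paper itself, which states the loop-removal convention in the remark immediately following the definition of $\mathscr{\widetilde{N}}$.
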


\begin{proof}
	This follows directly from the definitions.
\end{proof}

We now define the reaction network $\mathscr{\widetilde{N}}$ of kinetic complexes of a cycle terminal PLK system $(\mathscr{\widetilde{N}}, K)$:
\begin{definition}
	The set of kinetic complexes has a reaction network structure given by $\mathscr{\widetilde{N}}= (\mathscr{S}, \mathscr{\widetilde{C}}, \mathscr{\widetilde{R}})$
	with $\mathscr{\widetilde{C}}= \bigcup\limits_y   \mathscr{\widetilde{C}} (y)$ 
	and $\mathscr{\widetilde{R}}= \bigcup\limits_q   \mathscr{\widetilde{R}} (q)$ .
	We denote $| \mathscr{\widetilde{C}} |$ and $| \mathscr{\widetilde{R}} |$ with $\widetilde{n}$ and $\widetilde{r}$, respectively.
\end{definition}
\begin{remark}
	If for a reaction $y \to y'$, $\mathscr{\widetilde{C}} (y) \cap \mathscr{\widetilde{C}} (y') \ne \varnothing$, loops may result in general, and our convention is that we remove them. This will not happen if the kinetics is interaction span surjective. We shall denote the set of interaction span surjective kinetics with PL-ISK. For PL-RDK systems, PL-ISK = PL-FSK (with factor span surjective kinetics).
\end{remark}

The following Proposition collects the basic properties of $\mathscr{N}$:

\begin{proposition}
	Let $(\mathscr{N}, K)$ be a cycle terminal PLK system. Then
	\begin{enumerate}
		\item[i.] $\mathscr{\widetilde{N}}$ is cycle terminal. If $\mathscr{N}$ is weakly reversible, then $\mathscr{\widetilde{N}}$ is also weakly reversible.
		\item[ii.] $\widetilde{n} \le N_R$ and $\widetilde{r} \le 
		\sum\limits_{y \to y'} N_R(y) N_R(y')$. If $K \in$ PL-ISK, then equality holds in both relationships, and $l = \widetilde{l}$.
	\end{enumerate}
\end{proposition}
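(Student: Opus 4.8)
The plan is to treat both parts through a single structural observation: the assignment $y \mapsto \mathscr{\widetilde{C}}(y)$, $q \mapsto \mathscr{\widetilde{R}}(q)$ produces a ``fibered'' copy of $\mathscr{N}$ over the kinetic complexes, so directed walks in $\mathscr{N}$ lift to directed walks in $\mathscr{\widetilde{N}}$, while the cardinality bounds reduce to Proposition \ref{cycleterminal:formulas} combined with the subadditivity of cardinality over the unions defining $\mathscr{\widetilde{C}}$ and $\mathscr{\widetilde{R}}$.

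For part i, I would first prove cycle terminality. Let $\widetilde{y} \in \mathscr{\widetilde{C}}(y)$, so $\widetilde{y} = F_q$ for some reaction $q\colon y \to y'$. Since $\mathscr{N}$ is cycle terminal, $y'$ is itself a reactant, hence $\mathscr{\widetilde{C}}(y') \ne \varnothing$, and each $\widetilde{y'} \in \mathscr{\widetilde{C}}(y')$ gives a kinetic complex reaction $\widetilde{y} \to \widetilde{y'} \in \mathscr{\widetilde{R}}(q)$, exhibiting $\widetilde{y}$ as a reactant of $\mathscr{\widetilde{N}}$. For weak reversibility I would use the characterization that a network is weakly reversible iff every reaction lies on a directed closed walk. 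Given $\widetilde{q}\colon \widetilde{y} \to \widetilde{y'}$ arising from $q\colon y \to y'$, weak reversibility of $\mathscr{N}$ supplies a directed path $y' = z_0 \to z_1 \to \cdots \to z_M = y$; choosing any $\widetilde{w_j} \in \mathscr{\widetilde{C}}(z_j)$ with $\widetilde{w_0} = \widetilde{y'}$ and $\widetilde{w_M} = \widetilde{y}$, each consecutive pair satisfies $\widetilde{w_j} \to \widetilde{w_{j+1}} \in \mathscr{\widetilde{R}}(z_j \to z_{j+1})$, so concatenating with $\widetilde{q}$ yields a closed walk through $\widetilde{q}$.

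For part ii the two inequalities are immediate: $\widetilde{n} = \left| \bigcup_y \mathscr{\widetilde{C}}(y)\right| \le \sum_y \left|\mathscr{\widetilde{C}}(y)\right| = \sum_y N_R(y) = N_R$ by Proposition \ref{cycleterminal:formulas}.i, and likewise $\widetilde{r} = \left|\bigcup_q \mathscr{\widetilde{R}}(q)\right| \le \sum_q \left|\mathscr{\widetilde{R}}(q)\right| \le \sum_{y \to y'} N_R(y) N_R(y')$ by Proposition \ref{cycleterminal:formulas}.ii, since the loop term $\left|\mathscr{\widetilde{C}}(y) \cap \mathscr{\widetilde{C}}(y')\right|$ only lowers the count. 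The crux is the equality under PL-ISK. The content of span surjectivity I would extract is that the kinetic complexes are pairwise distinct across the whole network, so the sets $\mathscr{\widetilde{C}}(y)$ are pairwise disjoint and, for each reaction $y \to y'$, $\mathscr{\widetilde{C}}(y) \cap \mathscr{\widetilde{C}}(y') = \varnothing$ (the Remark records that no loops then arise). Disjointness gives $\widetilde{n} = N_R$; moreover global distinctness means a kinetic complex determines its origin, so a kinetic reaction $\widetilde{y} \to \widetilde{y'}$ determines the ordered pair $(y,y')$ and hence the unique reaction $q\colon y \to y'$, whence the $\mathscr{\widetilde{R}}(q)$ are pairwise disjoint and loop-free and $\widetilde{r} = \sum_{y \to y'} N_R(y) N_R(y')$. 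For $l = \widetilde{l}$ I would use the projection $\pi\colon \mathscr{\widetilde{C}} \to \mathscr{C}$ sending each kinetic complex to its origin, well defined by distinctness: since $\pi$ carries $\widetilde{y} \to \widetilde{y'}$ to $y \to y'$, preimages of distinct linkage classes are disjoint and unlinked, giving $\widetilde{l} \ge l$, while the lifting argument of part i shows each such preimage is connected, giving $\widetilde{l} \le l$.

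The main obstacle is the loop-removal convention. Without it, both the ``$\widetilde{y}$ is a reactant'' step and the counting for $\widetilde{r}$ are clean; with it, one must verify that removed loops never disconnect a lifted walk nor strand a kinetic complex as isolated. I would therefore isolate precisely the consequence of PL-ISK, namely pairwise distinctness of all kinetic complexes, under which every loop vanishes and all counts become exact; the general inequalities need only the trivial direction of these estimates and so are unaffected.
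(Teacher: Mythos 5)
Your proof is correct and takes essentially the same route as the paper's: part i by lifting directed walks of $\mathscr{N}$ to $\mathscr{\widetilde{N}}$, and part ii by summing the counts of Proposition \ref{cycleterminal:formulas} over reactants and reactions and then using PL-ISK to force $\mathscr{\widetilde{C}}(y) \cap \mathscr{\widetilde{C}}(y') = \varnothing$ for distinct complexes $y$ and $y'$, which turns the subadditive bounds into equalities. You in fact supply more detail than the paper's terse argument --- notably the projection $\pi\colon \mathscr{\widetilde{C}} \to \mathscr{C}$ yielding $l = \widetilde{l}$ (which the paper asserts without proof) and the explicit caveat about the loop-removal convention, whose degenerate cases the paper passes over silently.
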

\begin{proof}
	For (i.), as the notation already indicates, any kinetic complex $\widetilde{y}$ derives from a reactant, ensuring the existence of a reaction in $\mathscr{\widetilde{R}}$ of which it is the reactant. In the weakly reversible case, any reaction in $\mathscr{R}$ is in a cycle, leading to every reaction in $\mathscr{R}$ being in a cycle too. For (ii.), the upper bounds are derived from taking the sum over all reactants and over all reactions. If $y$ and $y'$ are distinct complexes, since the kinetics is PL-ISK, their reactions are disjoint, and hence, $\mathscr{\widetilde{C}} (y) \cap \mathscr{\widetilde{C}} (y') =  \varnothing$, and the formula follows from Proposition \ref{cycleterminal:formulas}.
\end{proof}

We denote the incidence, complexes and stoichiometric maps of $\mathscr{\widetilde{N}}$ as $\widetilde{I}_a$ , $\widetilde{Y}$ and $\widetilde{N}$, respectively.

\begin{definition}
	The {\bf kinetic flux space} $\widetilde{S}$ of $(\mathscr{N}, K)$ is defined as the image of $\widetilde{N}$ and its dimension is denoted as the kinetic rank $\widetilde{s}$.
\end{definition}

Thus, the kinetic flux space $\widetilde{S}$ is just the stoichiometric subspace of the network of kinetic complexes.

\begin{definition}
	The {\bf kinetic complex deficiency} of $(\mathscr{N}, K)$ is defined as $$\delta_\mathscr{\widetilde{N}}:= \widetilde{n} - \widetilde{l} - \widetilde{s}.$$
\end{definition}

In view of the geometric interpretation, this value is nonnegative. We have the following important fact:

\begin{proposition}
	If $\widetilde{n} - \widetilde{l} = n - l$ for a cycle terminal PLK system $(\mathscr{N}, K)$, then $\delta_\mathscr{\widetilde{N}} = \widetilde{\delta}$.  This holds in particular for PL-FSK systems. 
\end{proposition}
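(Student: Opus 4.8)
The plan is to reduce the claim to a one-line cancellation and then to check the ``in particular'' clause against the earlier structural results. Recall that the kinetic complex deficiency just defined is $\delta_\mathscr{\widetilde{N}} = \widetilde{n} - \widetilde{l} - \widetilde{s}$, whereas the kinetic deficiency $\widetilde{\delta}$ in the sense of M\"uller and Regensburger is built from the complex and linkage-class counts of the \emph{original} network $\mathscr{N}$ together with the kinetic rank, i.e.\ $\widetilde{\delta} = n - l - \widetilde{s}$. Since both quantities contain the same term $-\widetilde{s}$, their difference is purely combinatorial,
\[
\delta_\mathscr{\widetilde{N}} - \widetilde{\delta} = (\widetilde{n} - \widetilde{l}) - (n - l),
\]
and the first step is simply to observe that the hypothesis $\widetilde{n} - \widetilde{l} = n - l$ makes the right-hand side vanish, giving $\delta_\mathscr{\widetilde{N}} = \widetilde{\delta}$. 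No structural input is needed here beyond the two definitions.

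The second and only substantive step is to verify that PL-FSK systems meet the hypothesis. Here I would argue that a PL-FSK system is in particular PL-RDK, so every complex is a CF-node and hence $N_R(y) = 1$ for every complex $y$; by Proposition \ref{cycleterminal:formulas}(i) this gives $|\mathscr{\widetilde{C}}(y)| = 1$, so each complex carries exactly one kinetic complex. I would then invoke the preceding Proposition: since PL-FSK coincides with PL-ISK on PL-RDK systems (as noted in the Remark), the equalities $\widetilde{n} = N_R$ and $l = \widetilde{l}$ hold. For a cycle terminal PL-RDK system, $N_R = \sum_y N_R(y) = n$, so $\widetilde{n} = n$ and $\widetilde{l} = l$, whence $\widetilde{n} - \widetilde{l} = n - l$ and the first step applies.

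I expect no genuine obstacle: the heart of the Proposition is the trivial cancellation of $\widetilde{s}$, and the work in the PL-FSK case is pure bookkeeping, tracking the three ingredients ($N_R(y) = 1$, cycle-terminality, and $l = \widetilde{l}$) and citing them from the correct earlier results. The one point I would state explicitly is that $\widetilde{n} = n$ depends on distinct complexes having distinct kinetic complexes; without span surjectivity two complexes could in principle collapse to a common kinetic complex and lower $\widetilde{n}$, which is precisely why the ``in particular'' clause is phrased for PL-FSK rather than for all PL-RDK systems. Making that dependence visible through the invocation of the PL-ISK equalities is what keeps the second clause airtight.
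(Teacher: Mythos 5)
Your proof is correct and takes essentially the same approach as the paper: the paper likewise treats the first claim as an immediate cancellation from the two definitions, and proves the PL-FSK clause by invoking the graph isomorphism between $\mathscr{N}$ and $\widetilde{\mathscr{N}}$ to get $\widetilde{n}=n$ and $\widetilde{l}=l$. Your derivation of those two equalities via the PL-ISK proposition together with the count $N_R=\sum_y N_R(y)=n$ for a cycle terminal PL-RDK system is just a more explicit rendering of that same isomorphism argument.
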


\begin{proof}
	Due to the graph isomorphism for PL-FSK systems, we have $\widetilde{n} = n$ and $\widetilde{l} = l$.
\end{proof}

\begin{remark}
	For PL-RDK systems which are not factor span surjective, we only have $\widetilde{n} \le n$ and $\widetilde{l} \le l$ so that no general statement about their differences is possible. It is in view of this restricted coincidence with the kinetic deficiency that we introduced a different name and symbol for kinetic complex deficiency.
\end{remark}

\begin{example}
	We consider again the subnetwork of Schmitz's carbon cycle model from our Running Example \ref{ex:schmitz}.
	The system has 7 kinetic complexes, 1 linkage class and kinetic rank = 6. Hence, its kinetic complex deficiency = 0. This is also the kinetic deficiency of the weakly reversible PL-RDK system, so that from the M\"uller-Regensburger theory, $(\mathscr{\widetilde{N}}, \widetilde{K})$ is unconditionally complex balanced.
\end{example}

\subsection{The subnetwork $\widetilde{\mathscr{N}}_\mathscr{D}$ of a PL-RDK decomposition of a PL-NDK system $(\mathscr{N}, K)$}

In this Section, we use a framework to show that the flux subspaces belong to a subnetwork of kinetic complexes (determined by the decomposition) whose deficiency equals that of the original network. We introduce the construction in a more general context, and then focus on a PLK system $(\mathscr{N}, K)$ with a weakly reversible, bi-independent PL-FSK decomposition $\mathscr{D}: \mathscr{N}   = \mathscr{N}_1 \cup \ldots \cup \mathscr{N}_k$ with $\dim S_i = \dim \widetilde{S}_i$.

We set $\widetilde{\mathscr{N}}_i$ := subnetwork
of 
$\widetilde{\mathscr{N}}$
induced by ${\mathscr{N}}_i$, i.e., take the reactions defining it, then form the kinetic complexes (still in the sense of M\"uller-Regensburger).

\begin{definition}
	The {\bf induced subnetwork} $\widetilde{\mathscr{N}}_{\mathscr{D}}$  is defined as the union of the $\widetilde{\mathscr{N}}_i$. If the covering $\{\widetilde{\mathscr{N}}_i\}$ is a decomposition, we call it the {\bf induced decomposition}. 
\end{definition}

In the following, we will assume that the covering is indeed a decomposition (so that in the examples, this has to be verified). Note however, that the independence property, i.e., $S$ is the direct sum of the $S_i$'s actually implies that the covering is a decomposition.

We now introduce a convenient notation for our results: we say that a decomposition property is {\bf bi-level} if the property holds for both the network decomposition and the induced subnetwork decomposition. We will deal mostly with bi-level weakly reversible, bi-level independent, and bi-level bi-independent decompositions.

\begin{proposition}
	Let $(\mathscr{N}, K)$ be a PLK system with a weakly reversible PL-RDK decomposition $\mathscr{D}: \mathscr{N} = \mathscr{N}_1 \cup \ldots \cup \mathscr{N}_k$ with $\dim S_i = \dim \widetilde{S}_I$.
	\begin{enumerate}
		\item[i.] $\mathscr{N}$ is weakly reversible.
		\item[ii.] If $\mathscr{D}$ is an independent decomposition, then it is bi-level independent.
		\item[iii.]	If $\mathscr{D}$ is a bi-independent PL-FSK decomposition, then it is bi-level weakly reversible and bi-level bi-independent. If 
		$\widetilde{n}_\mathscr{D}$ and $\widetilde{l}_\mathscr{D}$ 
		denote the number of complexes and linkage classes of $\widetilde{\mathscr{N}}_\mathscr{D}$ respectively, then  $\widetilde{n}_\mathscr{D} - \widetilde{l}_\mathscr{D} = n - l$,
		$\widetilde{s}_\mathscr{D} = s$, and hence, $\delta(\widetilde{\mathscr{N}}_\mathscr{D}) = \delta (\mathscr{N})$.
	\end{enumerate}
\end{proposition}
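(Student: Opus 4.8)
The plan is to peel off the three statements in order, using as the engine the PL-FSK hypothesis, which makes the reactant-to-kinetic-complex assignment $y \mapsto F_y$ a bijection on each subnetwork; this gives a graph isomorphism $\mathscr{N}_i \cong \widetilde{\mathscr{N}}_i$, hence $\widetilde{n}_i = n_i$ and $\widetilde{l}_i = l_i$, and — now invoking $\dim S_i = \dim \widetilde{S}_i$ — also $\widetilde{s}_i = s_i$ and $\widetilde{\delta}_i = \delta_i$. Statement (i) is immediate: each $\mathscr{N}_i$ is weakly reversible, so every reaction of $\mathscr{N} = \bigcup_i \mathscr{N}_i$ lies on a directed cycle inside some $\mathscr{N}_i$ and therefore inside $\mathscr{N}$; the identical argument applied to the $\widetilde{\mathscr{N}}_i$ yields weak reversibility of $\widetilde{\mathscr{N}}_\mathscr{D}$, which is the bi-level weak reversibility asserted in (iii).

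For the incidence-independence half of (iii) I would introduce the \emph{gluing map} $g \colon \mathbb{R}^{\widetilde{\mathscr{C}}_\mathscr{D}} \to \mathbb{R}^{\mathscr{C}}$ carrying the basis vector of a kinetic complex $F_y^{(i)}$ to the basis vector of its underlying complex $y$. Under the span-surjectivity (loop-removal) convention adopted for $\widetilde{\mathscr{N}}$ distinct complexes carry distinct kinetic complexes, so $g$ is well defined; it intertwines incidence maps, $g \circ \widetilde{I}_{a,i} = I_{a,i}$ under the reaction bijection $q \leftrightarrow \widetilde{q}$, and by the PL-FSK block isomorphism $g$ is injective on each $\mathbb{R}^{\widetilde{\mathscr{C}}_i}$. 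The key point is that the kinetic copies split each NF node into several kinetic complexes, so $g$ identifies strictly fewer complexes than the stoichiometric structure. Then if $\sum_i v_i = 0$ with $v_i \in \operatorname{im} \widetilde{I}_{a,i}$, applying $g$ gives $\sum_i g(v_i) = 0$ with $g(v_i) \in \operatorname{im} I_{a,i}$; incidence independence of $\mathscr{D}$ forces every $g(v_i)=0$, and injectivity on the $i$-th block forces $v_i = 0$. Hence $\operatorname{im} \widetilde{I}_a^{\mathscr{D}} = \bigoplus_i \operatorname{im} \widetilde{I}_{a,i}$, i.e. the induced decomposition $\widetilde{\mathscr{D}}$ is incidence independent, so $\widetilde{n}_\mathscr{D} - \widetilde{l}_\mathscr{D} = \sum_i (\widetilde{n}_i - \widetilde{l}_i) = \sum_i (n_i - l_i) = n - l$.

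It remains to handle the independence (flux) half, i.e. statement (ii), $\widetilde{S}_\mathscr{D} = \bigoplus_i \widetilde{S}_i$, equivalently $\widetilde{s}_\mathscr{D} = \sum_i \widetilde{s}_i = \sum_i s_i = s$; I expect this to be the main obstacle. The gluing trick does \emph{not} transfer here, because the relevant map would live on species space where $\widetilde{Y}$ and $Y$ act differently, and directness of $\sum_i S_i$ in $\mathbb{R}^{\mathscr{S}}$ does not by itself force directness of $\sum_i \widetilde{S}_i$ — a priori the two families of subspaces share only their per-block dimensions. The plan is to argue instead at the level of reaction-vector syzygies, using that independence of $\mathscr{D}$ is equivalent to $\ker N = \bigoplus_i \ker N_i$ and trying to establish the analogous splitting $\ker \widetilde{N} = \bigoplus_i \ker \widetilde{N}_i$: I would use weak reversibility to express every syzygy through cycle fluxes and then combine $\dim S_i = \dim \widetilde{S}_i$ with the already-proved incidence independence of $\widetilde{\mathscr{D}}$ to exclude cross-subnetwork kinetic relations. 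This is precisely the delicate step, since it is where the hypothesis $\dim S_i = \dim \widetilde{S}_i$ must do genuine work beyond a dimension count, and it would deserve the most careful treatment.

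Once $\widetilde{s}_\mathscr{D} = s$ is secured, everything assembles: statement (ii) follows, $\widetilde{\mathscr{D}}$ is then both independent and incidence independent, hence bi-independent, completing bi-level bi-independence in (iii); and the deficiency identity drops out by subtraction, $\delta(\widetilde{\mathscr{N}}_\mathscr{D}) = (\widetilde{n}_\mathscr{D} - \widetilde{l}_\mathscr{D}) - \widetilde{s}_\mathscr{D} = (n-l) - s = \delta(\mathscr{N})$, which also matches $\sum_i \widetilde{\delta}_i = \sum_i \delta_i = \delta(\mathscr{N})$ via the earlier equivalence between bi-independence and $\delta = \delta_1 + \cdots + \delta_k$.
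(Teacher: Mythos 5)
Your proposal proves parts (i), the bi-level weak reversibility in (iii), and the incidence half of (iii), but it never actually proves (ii), and (ii) is the load-bearing step: $\widetilde{s}_\mathscr{D}=s$ and hence $\delta(\widetilde{\mathscr{N}}_\mathscr{D})=\delta(\mathscr{N})$ both rest on the directness of $\sum_i \widetilde{S}_i$, which you explicitly flag as ``the delicate step'' and then leave as an unexecuted plan (``express every syzygy through cycle fluxes \ldots exclude cross-subnetwork kinetic relations''). That is a genuine gap, not a stylistic omission. For comparison, the paper disposes of (ii) in one line: $\dim S_i = \dim \widetilde{S}_i$ makes $S_i$ and $\widetilde{S}_i$ ``isomorphic,'' hence $S_i \cap S_j = \{0\}$ implies $\widetilde{S}_i \cap \widetilde{S}_j = \{0\}$, and independence transfers. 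Your instinct that a pure dimension count cannot do this is sound: abstract isomorphism of subspaces says nothing about how they sit inside $\mathbb{R}^\mathscr{S}$, and pairwise trivial intersection is anyway weaker than directness of the sum once $k>2$. So you have correctly located a soft spot in the paper's own argument --- but locating it is not closing it, and your sketch gives no mechanism by which the hypothesis $\dim S_i = \dim \widetilde{S}_i$ (equivalently $\dim \ker N_i = \dim \ker \widetilde{N}_i$) would force $\ker \widetilde{N} = \bigoplus_i \ker \widetilde{N}_i$ from $\ker N = \bigoplus_i \ker N_i$; as it stands, the proposition's conclusion (ii), and with it the second half of (iii), is asserted rather than proved in your write-up just as much as in your reading of the paper.

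On the positive side, your gluing map $g$ is a genuine improvement over the paper on the incidence side: the paper's computation $\widetilde{n}_\mathscr{D}-\widetilde{l}_\mathscr{D} = \sum_i(\widetilde{n}_i-\widetilde{l}_i) = \sum_i(n_i-l_i) = n-l$ silently assumes the induced covering is itself incidence independent, and your block-injectivity argument supplies exactly that verification. One caveat, though: $g$ is well-defined only if no kinetic complex of $\widetilde{\mathscr{N}}_\mathscr{D}$ arises from two distinct underlying complexes. Per-subnetwork PL-FSK only makes $y \mapsto F_y$ injective within each $\mathscr{N}_i$; it does not exclude a cross-subnetwork collision $F^{(i)}_y = F^{(j)}_{y'}$ with $y \ne y'$, in which case the corresponding vertex of $\widetilde{\mathscr{N}}_\mathscr{D}$ has two candidate images under $g$. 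To make your argument airtight you need global interaction span surjectivity (the paper's PL-ISK remark) or an explicit no-collision hypothesis, which the paper itself also tacitly assumes when it invokes ``the graph isomorphism of the corresponding subnetworks.''
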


\begin{proof}
	(i.) Any reaction of $\mathscr{D}$ is in one of the subnetworks, and hence, in a cycle. (ii.) $\dim S_i = \dim \widetilde{S}_I$ is equivalent to $S_i$ and $\widetilde{S}_I$ being isomorphic, hence $S_i \cap S_j = \{0\} \implies \widetilde{S}_i \cap \widetilde{S}_j = \{0\}$.
	(iii.) Note that each reaction in the subnetwork $\mathscr{N}_i$ is in a cycle. Hence, any reaction in the subnetwork $\widetilde{\mathscr{N}}_i$ is also in a cycle. Thus, the $\widetilde{\mathscr{N}}_i$ also form a weakly reversible decomposition.
	Independence of the induced decomposition follows from (ii.) and the fact that $\mathscr{D}$ is independent.
	On the other hand, $\sum (n_i-l_i)=n-l$ follows from the assumption of incidence independence of $\mathscr{D}$. The graph isomorphism of the corresponding subnetworks of $\mathscr{N}$ and $\widetilde{\mathscr{N}}_\mathscr{D}$ implies $\widetilde{n}-\widetilde{l}=n-l$, and $\widetilde{s}_\mathscr{D} = s$ follows from (ii.). Hence, $$\delta(\widetilde{\mathscr{N}}_\mathscr{D}) = \widetilde{n}_\mathscr{D} - \widetilde{l}_\mathscr{D} - \widetilde{s}_\mathscr{D}= n - l -s = \delta (\mathscr{N}). $$
\end{proof}

\begin{example}
	For the subnetwork of Schmitz's model, we have $\widetilde{n}_\mathscr{D} = 7$, $\widetilde{l}_\mathscr{D} = 2$, $n= 6$ , $l = 1$, and $7 - 2 = 6 - 1$.  Furthermore, $\widetilde{s}_\mathscr{D} = s = 5$. We emphasize that the graph isomorphism is not between the networks $\mathscr{N}$ and $\widetilde{\mathscr{N}}_\mathscr{D}$, but among the corresponding subnetworks via the decomposition described in the proposition given above. We notice that in this example, $\widetilde{n}_\mathscr{D} \ne n$ and $\widetilde{l}_\mathscr{D} \ne l$ but $\widetilde{n}_\mathscr{D} - \widetilde{l}_\mathscr{D} = n-l$.
\end{example}

\subsection{An extension of the Deficiency Zero Theorem of Fortun et al. to PLK systems with positive deficiency}

To formulate the extension of the result of Fortun et al. \cite{fortun2}, we need one more concept which generalizes the parametrization property of PL-RLK (power law with reactant set linear independent kinetics) systems with low deficiency. For details about PL-RLK, we refer the reader to pages 371-372 of \cite{TAM2018}, and pages 627-628 of \cite{fortun2}.

\begin{definition}
	A kinetic system $(\mathscr{N},K)$ is of type {\bf PLP (positive equilibria log-parametrized)} if 
	\begin{enumerate}
		\item[i.] $E_+(\mathscr{N},K) \ne \varnothing$, and 
		\item[ii.] $E_+(\mathscr{N},K) = \{ x \in \mathbb{R}^\mathscr{S}_{>0}| \log x - \log x^* \in (P_E) ^\perp\}$,
	\end{enumerate}
	where $P_E$ is a subspace of $\mathbb{R}^\mathscr{S}$ and $x^*$ is a positive equilibrium.
\end{definition}

\begin{definition}
	A kinetic system $(\mathscr{N}, K)$ is of type {\bf CLP (complex balanced equilibria log parametrized)} if
	\begin{enumerate}
		\item[i.] $Z_+(\mathscr{N},K) \ne \varnothing$, and 
		\item[ii.] $Z_+(\mathscr{N},K) = \{ x \in \mathbb{R}^\mathscr{S}_{>0}| \log x - \log x^* \in (P_Z) ^\perp\}$,
	\end{enumerate}
	where $P_Z$ is a subspace of $\mathbb{R}^\mathscr{S}$ and $x^*$ is a complex balanced equilibrium.
\end{definition}

A kinetic system is bi-LP if it is of PLP and of CLP type and $P_E = P_Z$.  We will often use the shorter PLP system, CLP system and bi-LP system notation as well as the collective term ``LP systems''.

A key property of an LP system was in principle already derived by M. Feinberg in his 1979 lectures \cite{feinberg} as shown in \cite{MAME2021}:

\begin{theorem}
	Let $(\mathscr{N},K)$ be a chemical kinetic system.
	\begin{enumerate}
		\item[i.] If $(\mathscr{N},K)$ is a PLP system, then $|E_+(\mathscr{N}, K) \cap Q| = 1$ for any positive coset $Q$ of $P_E$ in $\mathbb{R}^\mathscr{S}$.
		\item[ii.] If $(\mathscr{N},K)$ is a CLP system, then $|Z+(\mathscr{N}, K) \cap Q| = 1$ for any positive coset $Q$ of $P_Z$ in $\mathbb{R}^\mathscr{S}$.
		\item[iii.] If $(\mathscr{N},K)$ is a bi-LP system, then it is absolutely complex balanced, i.e., each positive equilibrium is complex balanced.
	\end{enumerate}
\end{theorem}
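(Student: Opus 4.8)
The plan is to transport the entire problem through the coordinatewise logarithm map $L : \mathbb{R}^\mathscr{S}_{>0} \to \mathbb{R}^\mathscr{S}$, $L(x) = \log x$, which is a bijection carrying the multiplicative coset structure of $\mathbb{R}^\mathscr{S}_{>0}$ to the affine (additive) structure of $\mathbb{R}^\mathscr{S}$. Under $L$, the PLP hypothesis says precisely that $L(E_+(\mathscr{N},K)) = \log x^* + P_E^\perp$, an affine subspace with direction $P_E^\perp$; and a positive coset $Q$ of $P_E$ is, by definition, $L^{-1}(v + P_E)$ for some $v \in \mathbb{R}^\mathscr{S}$, so that $L(Q) = v + P_E$ is an affine subspace with direction $P_E$. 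Since $L$ is a bijection, $|E_+ \cap Q| = |L(E_+) \cap L(Q)|$, reducing statement (i) to a purely linear-algebraic count.

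First I would establish the elementary fact that if $V = U \oplus W$ is a direct-sum decomposition, then any two translates $a + U$ and $b + W$ meet in exactly one point: solving $a + u = b + w$ amounts to writing $b - a = u - w$, which has a unique solution by uniqueness of the decomposition $b - a = u_0 + w_0$ with $u_0 \in U$ and $w_0 \in W$. Applying this with $U = P_E^\perp$ and $W = P_E$ --- which are genuinely complementary, being orthogonal complements, so that $\mathbb{R}^\mathscr{S} = P_E \oplus P_E^\perp$ --- gives that $L(E_+) \cap L(Q)$ is a single point, hence $|E_+ \cap Q| = 1$. Part (ii) is identical, replacing $E_+$, $P_E$, $x^*$ by $Z_+$, $P_Z$, and a complex balanced equilibrium; the hypotheses $E_+ \ne \varnothing$ (respectively $Z_+ \ne \varnothing$) are exactly what guarantee the base points exist so the parametrizations are nonempty.

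For (iii), write $P := P_E = P_Z$. In logarithmic coordinates both equilibria sets are translates of the single subspace $P^\perp$: we have $L(E_+) = \log x^* + P^\perp$ and $L(Z_+) = \log x^{**} + P^\perp$, where $x^*$ is a positive equilibrium and $x^{**}$ a complex balanced one. I would then invoke the inclusion $Z_+(\mathscr{N},K) \subseteq E_+(\mathscr{N},K)$ recorded earlier, together with $Z_+ \ne \varnothing$ (from the CLP hypothesis): choosing $x^{**} \in Z_+ \subseteq E_+$ forces $\log x^{**} - \log x^* \in P^\perp$, so the two translates of $P^\perp$ share a point and must therefore coincide, since two affine subspaces with the same direction are equal as soon as they intersect. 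Hence $E_+ = Z_+$, and in particular every positive equilibrium is complex balanced.

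The argument is essentially a dictionary between multiplicative and additive structure, so I expect no computational obstacle; the only point demanding care is the interpretation of a positive coset of $P_E$ as the $L$-preimage of an additive coset, and the explicit verification that the two parametrizing subspaces are genuine complements. That complementarity is automatic here because $P_E^\perp$ is the orthogonal complement, but it is worth stating plainly, since it is precisely what upgrades the generic nonempty-intersection statement to an exactly-one-point statement.
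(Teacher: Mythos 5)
Your part (iii) is sound and is essentially the intended argument: re-basing both log-parametrizations at a common point $x^{**} \in Z_+(\mathscr{N},K) \subseteq E_+(\mathscr{N},K)$ (possible since CLP gives $Z_+ \ne \varnothing$) and using $P_E = P_Z$ shows the two sets are the same translate of the same direction, hence $E_+ = Z_+$. Note also that the paper itself contains no proof of this theorem --- it attributes it to Feinberg's 1979 lectures as presented in the cited LP-systems paper --- so the comparison below is against that intended argument.

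Parts (i) and (ii), however, rest on a misreading of ``positive coset $Q$ of $P_E$ in $\mathbb{R}^\mathscr{S}$.'' You take $Q := L^{-1}(v+P_E)$, the exponential image of an additive coset, so that $L(Q)$ is affine and the count reduces to the trivial fact that translates of the complementary subspaces $P_E$ and $P_E^\perp$ meet in exactly one point. The intended object --- the one that makes the theorem a generalization of the Horn--Jackson/Feinberg uniqueness of complex balanced equilibria within each positive stoichiometric compatibility class (take $P_E = S$) --- is the additive coset in concentration space, $Q = (v+P_E)\cap \mathbb{R}^\mathscr{S}_{>0}$. Under the log map such a set is \emph{not} an affine subspace (the logarithm of an affine slice of the positive orthant is a curved hypersurface), so your direct-sum lemma does not apply; conversely, $E_+$ is affine only in log coordinates, not in concentration space, so one cannot argue on either side by intersecting two affine sets. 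The genuine content of (i)--(ii) is a Birch-type existence-and-uniqueness statement: for a subspace $U \subseteq \mathbb{R}^\mathscr{S}$ and $a,b \in \mathbb{R}^\mathscr{S}_{>0}$ there is exactly one $x \in \mathbb{R}^\mathscr{S}_{>0}$ with $\log x - \log b \in U^\perp$ and $x - a \in U$ (Proposition B.1 of Feinberg's 1979 lectures). Its proof requires strict convexity or monotonicity --- e.g., minimizing the strictly convex function $h(x) = \sum_{s} \bigl( x_s(\log x_s - \log b_s) - x_s \bigr)$ over $(a+U)\cap\mathbb{R}^\mathscr{S}_{>0}$, whose gradient is $\log x - \log b$ --- and not linear algebra alone. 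A quick sanity check that your reading cannot be the intended one: under it, statement (i) holds vacuously for any log-parametrized set, positivity plays no role, and the theorem would say nothing about uniqueness of equilibria within stoichiometric classes, which is its entire point. The repair is to replace your affine-intersection lemma by Proposition B.1 applied with $U = P_E$, $b = x^*$ for (i), and $U = P_Z$, $b$ a complex balanced equilibrium for (ii).
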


The generalization of the main result of Fortun et al. is the following Theorem:

\begin{theorem}
	Let $(\mathscr{N},K)$ be a power law system with a weakly reversible PL-RDK decomposition $\mathscr{D}: \mathscr{N}   = \mathscr{N}_1 \cup \mathscr{N}_2  \cup \ldots \mathscr{N}_k$. If $\mathscr{D}$ is bi-level independent and of PLP type with $P_{E,i} = \widetilde{S}_i$, then $(\mathscr{N},K)$ is a weakly reversible PLP system with $P_E = \sum \widetilde{S}_i$.
\end{theorem}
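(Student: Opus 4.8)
The plan is to combine the Feinberg Decomposition Theorem (Theorem \ref{feinberg:decom:thm}) with the log-parametrization of each subsystem, the crux being to produce a single positive equilibrium lying in every subnetwork simultaneously. Weak reversibility of $(\mathscr{N},K)$ is immediate: since $\mathscr{D}$ is a weakly reversible decomposition, each reaction lies in one of the subnetworks and hence in a cycle, so $\mathscr{N}$ is weakly reversible, which is exactly part (i.) of the preceding Proposition. For the equilibria, I first invoke the independence of the stoichiometric subspaces $S_i$, which is one half of the bi-level independence hypothesis: Theorem \ref{feinberg:decom:thm} then upgrades the generic inclusion to the equality $E_+(\mathscr{N},K) = \bigcap_i E_+(\mathscr{N}_i,K_i)$.

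Next I would pass to logarithmic coordinates. Writing $u = \log x$ and $u_i^* = \log x_i^*$ for a chosen positive equilibrium $x_i^*$ of each $(\mathscr{N}_i,K_i)$ (which exists because each subsystem is of PLP type), the hypothesis $P_{E,i} = \widetilde{S}_i$ turns each subnetwork equilibrium set into the affine subspace $E_+(\mathscr{N}_i,K_i) = u_i^* + \widetilde{S}_i^{\perp}$. Thus $E_+(\mathscr{N},K)$ corresponds to the affine intersection $\bigcap_i(u_i^* + \widetilde{S}_i^{\perp})$, and the whole statement reduces to two points: (a) this intersection is nonempty, and (b) once a point $u^*$ of it is fixed, the intersection equals $u^* + \bigcap_i \widetilde{S}_i^{\perp}$. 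Point (b) is routine, using the standard identity $\bigcap_i \widetilde{S}_i^{\perp} = \big(\sum_i \widetilde{S}_i\big)^{\perp}$, which immediately produces the claimed form $P_E = \sum_i \widetilde{S}_i$.

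The main obstacle is point (a): as the remark following Theorem \ref{feinberg:decom:thm} stresses, independence alone does not force $\bigcap_i E_+(\mathscr{N}_i,K_i) \ne \varnothing$. Here I would invoke the second half of bi-level independence, namely that $\sum_i \widetilde{S}_i$ is a \emph{direct} sum. The requirement $u - u_i^* \in \widetilde{S}_i^{\perp}$ says precisely that $\langle u, v\rangle = \langle u_i^*, v\rangle$ for every $v \in \widetilde{S}_i$. Because $\bigoplus_i \widetilde{S}_i$ is direct, one may define a single linear functional $\phi$ on $V := \sum_i \widetilde{S}_i$ by $\phi\big(\sum_i v_i\big) = \sum_i \langle u_i^*, v_i\rangle$; well-definedness is exactly where directness is needed, since it guarantees the decomposition $v = \sum_i v_i$ with $v_i \in \widetilde{S}_i$ is unique. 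By Riesz representation there is $w \in V$ with $\langle w, v\rangle = \phi(v)$ for all $v \in V$; setting $u^* = w$ gives $\langle u^*, v\rangle = \langle u_i^*, v\rangle$ for each $v \in \widetilde{S}_i$, that is $u^* - u_i^* \in \widetilde{S}_i^{\perp}$ for every $i$. Exponentiating, $x^* = \exp(u^*)$ is a common positive equilibrium, so the intersection is nonempty.

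Assembling the pieces: $x^*$ witnesses $E_+(\mathscr{N},K) \ne \varnothing$, which is PLP condition (i.), and by point (b) the intersection equals $u^* + \big(\sum_i \widetilde{S}_i\big)^{\perp}$ in log-coordinates, i.e. $E_+(\mathscr{N},K) = \{x \in \mathbb{R}^{\mathscr{S}}_{>0} \mid \log x - \log x^* \in P_E^{\perp}\}$ with $P_E = \sum_i \widetilde{S}_i$, which is PLP condition (ii.). Together with the weak reversibility established at the outset, this is exactly the assertion. As a consistency check I would confirm that both levels of independence are genuinely used — independence of the $S_i$ to obtain the Feinberg equality, and directness of the $\widetilde{S}_i$ to construct the common equilibrium — which clarifies why the bi-level independence hypothesis is the natural one.
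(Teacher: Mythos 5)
Your proposal is correct, and its skeleton coincides with the paper's proof: weak reversibility comes from every reaction lying in a cycle of some subnetwork; network-level independence feeds the Feinberg Decomposition Theorem to get $E_+(\mathscr{N},K)=\bigcap_i E_+(\mathscr{N}_i,K_i)$; the PLP hypothesis turns each factor into a coset $\log x_i^*+\widetilde{S}_i^{\perp}$ in log-coordinates; directness of $\sum_i\widetilde{S}_i$ (the induced-level half of bi-level independence) yields nonemptiness; and the identity $\bigcap_i\widetilde{S}_i^{\perp}=\bigl(\sum_i\widetilde{S}_i\bigr)^{\perp}$ gives $P_E=\sum_i\widetilde{S}_i$. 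Where you genuinely diverge is the nonemptiness step. The paper treats $k=2$ via the two-coset criterion --- $\bigl[\log x_1+\widetilde{S}_1^{\perp}\bigr]\cap\bigl[\log x_2+\widetilde{S}_2^{\perp}\bigr]\ne\varnothing$ iff $\log x_1-\log x_2\in\widetilde{S}_1^{\perp}+\widetilde{S}_2^{\perp}=\bigl(\widetilde{S}_1\cap\widetilde{S}_2\bigr)^{\perp}=\mathbb{R}^{\mathscr{S}}$ --- and then asserts the general case ``inductively''; that induction is not spelled out, and it silently requires knowing that each partial intersection is again a coset of $\bigl(\sum_{i\le j}\widetilde{S}_i\bigr)^{\perp}$ and that $\bigl(\sum_{i\le j}\widetilde{S}_i\bigr)\cap\widetilde{S}_{j+1}=\{0\}$, both consequences of directness of the full sum. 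Your construction handles all $k$ at once: the functional $\phi\bigl(\sum_i v_i\bigr)=\sum_i\langle u_i^*,v_i\rangle$ is well defined precisely because the sum is direct, and its Riesz representative $w$ satisfies $w-u_i^*\in\widetilde{S}_i^{\perp}$ for every $i$, so $e^{w}$ is a common positive equilibrium. This buys you a uniform, induction-free argument (and makes explicit why pairwise coset intersection arguments alone would not suffice for $k\ge 3$), at the cost of invoking slightly more machinery than the paper's elementary coset criterion; both proofs use the two halves of bi-level independence in exactly the same places.
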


\begin{proof}
	Each subnetwork of $(\mathscr{N},K)$ in the decomposition is weakly reversible and hence, the network is weakly reversible.
	For the last two statements, we prove the case when $k=2$, and the general case can be proven inductively.
	Since each subnetwork of $(\mathscr{N},K)$ has the PLP-type, we have $E_+(\mathscr{N}_1,K_1) \ne \varnothing$ and $E_+(\mathscr{N}_2,K_2) \ne \varnothing$. In addition, 
	$$E_+(\mathscr{N}_1,K_1) = \{ x \in \mathbb{R}^\mathscr{S}_{>0}| \log x - \log x_1 \in \widetilde{S}_1 ^\perp\}$$ and 
	$$E_+(\mathscr{N}_2,K_2) = \{ x \in \mathbb{R}^\mathscr{S}_{>0}| \log x - \log x_2 \in \widetilde{S}_2 ^\perp\}$$
	where $x_1 \in E_+(\mathscr{N}_1,K_1)$ and $x_2 \in E_+(\mathscr{N}_2,K_2)$.
	The independence of the decomposition and Theorem \ref{feinberg:decom:thm} (Feinberg Decomposition Theorem) yield
	$$E_+(\mathscr{N},K)=E_+(\mathscr{N}_1,K_1) \cap E_+(\mathscr{N}_2,K_2).$$
	Thus, $x^* \in E_+(\mathscr{N},K) \iff x^* \in E_+(\mathscr{N}_1,K_1) \cap E_+(\mathscr{N}_2,K_2)$, and equivalently, $$\log x^* \in \left[\log x_1 + \widetilde{S}_1 ^\perp\right] \cap \left[\log x_2 + \widetilde{S}_2 ^\perp\right].$$
	From properties of cosets,
	$$\left[\log x_1 + \widetilde{S}_2 ^\perp\right] \cap \left[\log x_2 + \widetilde{S}_2 ^\perp\right] \ne \varnothing \iff \log x_1 - \log x_2 \in \widetilde{S}_2 ^\perp + \widetilde{S}_2 ^\perp.$$
	Independence of the induced decomposition ensures $\widetilde{S}_1 \cap \widetilde{S}_2 = \{0\}.$\\
	Then $\widetilde{S}_1^\perp+\widetilde{S}_2^\perp=\left(\widetilde{S}_1 \cap \widetilde{S}_2\right)^\perp=\{0\}^\perp=\mathbb{R}^\mathscr{S}$. Thus,
	$$\left[\log x_1 + \widetilde{S}_1 ^\perp\right] \cap \left[\log x_2 + \widetilde{S}_2 ^\perp\right] \ne \varnothing.$$ Let $\widehat x \in \left[\log x_1 + \widetilde{S}_1 ^\perp\right] \cap \left[\log x_2 + \widetilde{S}_2 ^\perp\right]$ and take $x^*=e^{\widehat{x}}$.\\We have ${\widehat{x}} \in \left[\log x_1 + \widetilde{S_1} ^\perp\right]$ and ${\widehat{x}} \in \left[\log x_2 + \widetilde{S_2} ^\perp\right]$ $\implies$ $x^* \in E_+(\mathscr{N}_1,K_1)$ and $x^* \in E_+(\mathscr{N}_2,K_2)$ $\implies$ $x^* \in E_+(\mathscr{N},K)$.
	Then,
	\[\left[\log x_1 + \widetilde{S}_1 ^\perp\right] \cap \left[\log x_2 + \widetilde{S}_2 ^\perp\right] = \log x^* + \left[ \widetilde{S}_1^\perp \cap \widetilde{S}_2^\perp \right]=\log x^* +\left(\widetilde{S_1} + \widetilde{S_2} \right)^\perp.\]
	Hence,
	$E_+(\mathscr{N},K) = \left\{ x \in \mathbb{R}^\mathscr{S}_{>0}| \log x - \log x^* \in \left(\widetilde{S}_1+\widetilde{S}_2\right) ^\perp\right\}.$
\end{proof}

\begin{corollary}
	Let $(\mathscr{N}, K)$ be a PLK system with a weakly reversible PL-RDK decomposition $\mathscr{D}: \mathscr{N}   = \mathscr{N}_1 \cup \mathscr{N}_2  \cup \ldots \mathscr{N}_k$ with $\dim S_i = \dim \widetilde{S}_I$. If $\mathscr{D}$ is independent of PLP type, then $(\mathscr{N}, K)$ is a weakly reversible PLP system with $P_E = \sum \widetilde{S}_i$.
\end{corollary}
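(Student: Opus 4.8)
The plan is to obtain this Corollary as an essentially immediate specialization of the preceding Theorem. The two statements have nearly identical hypotheses and the same conclusion; the only discrepancy is that the Theorem demands that $\mathscr{D}$ be \emph{bi-level} independent (and PLP of type $P_{E,i} = \widetilde{S}_i$), whereas here I am handed ordinary independence of $\mathscr{D}$ together with the dimension condition $\dim S_i = \dim \widetilde{S}_i$. So the whole task reduces to upgrading ``independent'' to ``bi-level independent,'' after which the Theorem applies verbatim.

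First I would invoke the earlier Proposition on PLK systems carrying the dimension condition $\dim S_i = \dim \widetilde{S}_i$, specifically its part (ii.), which states exactly that under this dimension equality an independent decomposition is automatically bi-level independent. The reason it holds is that $\dim S_i = \dim \widetilde{S}_i$ makes $S_i$ and $\widetilde{S}_i$ isomorphic through the reaction-to-kinetic-reaction correspondence, so the direct-sum relations $S_i \cap S_j = \{0\}$ transfer to $\widetilde{S}_i \cap \widetilde{S}_j = \{0\}$; but since this is already proved in that Proposition, I would simply cite it. This gives me precisely the first structural hypothesis the Theorem requires, and part (i.) of the same Proposition simultaneously secures weak reversibility of $\mathscr{N}$.

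With bi-level independence in hand, the remaining PLP-type hypothesis carries over unchanged: each subnetwork $(\mathscr{N}_i, K_i)$ is PLP with parametrizing subspace $\widetilde{S}_i$, which is exactly the normalization $P_{E,i} = \widetilde{S}_i$ used by the Theorem. Applying the Theorem then yields that $(\mathscr{N}, K)$ is a weakly reversible PLP system with $P_E = \sum \widetilde{S}_i$, as claimed. I anticipate no genuine obstacle here: the only substantive implication, namely ``independent plus $\dim S_i = \dim \widetilde{S}_i$ $\Rightarrow$ bi-level independent,'' is already isolated in the cited Proposition, so the only real care required is to confirm that the PLP normalization $P_{E,i} = \widetilde{S}_i$ is the intended one, ensuring the Theorem can be invoked without any modification.
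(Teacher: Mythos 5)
Your proposal is correct and matches the paper's own proof, which consists precisely of the observation that the earlier Proposition (part (ii), together with part (i) for weak reversibility) upgrades independence plus $\dim S_i = \dim \widetilde{S}_i$ to bi-level independence, after which the Theorem applies verbatim. Your version simply spells out the citation chain that the paper compresses into one sentence, including the correct reading of ``of PLP type'' as $P_{E,i} = \widetilde{S}_i$.
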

\begin{proof}
	We already showed in the previous section that these properties imply bi-level independence.
\end{proof}

\begin{example}
	We consider the running example in Example \ref{ex:schmitz}, i.e., the subnetwork of Schmitz's carbon cycle model is a PL-NDK to illustrate the Corollary. The weakly reversible independent decomposition has $\dim S_1 = \dim \widetilde{S}_1 = 2$ and
	$\dim S_2 = \dim \widetilde{S}_2 = 3$. According to the Deficiency Zero Theorem for PL-TIK systems \cite{TAM2018}, PL-RLK systems are of PLP type.
\end{example}

\begin{example}
	Any weakly reversible PL-RLK system satisfying the conditions of the Deficiency One Theorem for PL-TIK systems \cite{TAM2018} with at least one linkage class with deficiency = 1, illustrates the Theorem, but not the Corollary. For the linkage class with deficiency = 1, 
	$s_i = n_i - 1 - 1 = n_i-2$, while $\widetilde{s}_i = n_i - 1 - 0=n_i-1$. Thus, although the linkage class decomposition is bi-level bi-independent and PL-FSK, $\delta(\mathscr{N}_\mathscr{D}) = 0$
	while $1 \le \delta(\mathscr{N}) \le l$.
\end{example}

\begin{example}	
	In his PhD thesis \cite{BORO2013}, B. Boros states the following Theorem: \end{example}

\begin{theorem}
	For a mass action system $(\mathscr{}N, K)$ with independent linkage classes, $E_+(\mathscr{L}_i, K_i) \ne \varnothing \iff E_+(\mathscr{N}, K) \ne \varnothing$. In the complex balanced case, i.e. all linkage classes are complex balanced and hence weakly reversible, we have $E_+(\mathscr{L}_i, K_i) = Z_+(\mathscr{L}_i, K_i)$, and hence, of PLP type with $P_E = S$, according to classical results of Horn and Jackson. Since mass action systems are all factor span surjective and $S_i = \widetilde{S}_I$, we see that this is a special case of the Corollary for a bi-level bi-independent decomposition.
\end{theorem}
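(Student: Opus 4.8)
The plan is to treat the statement as two linked claims: the biconditional for general mass action systems, and the complex-balanced specialization that exhibits it as an instance of the Corollary. For the biconditional, the direction $E_+(\mathscr{N},K) \ne \varnothing \implies E_+(\mathscr{L}_i,K_i) \ne \varnothing$ is immediate from the Feinberg Decomposition Theorem (Theorem \ref{feinberg:decom:thm}): the linkage class decomposition is independent by hypothesis, so $E_+(\mathscr{N},K)=\bigcap_i E_+(\mathscr{L}_i,K_i)$, whence a global positive equilibrium restricts to a positive equilibrium on every linkage class. The converse direction, that positive equilibria on each linkage class force a global positive equilibrium, is exactly the existence result of B. Boros recorded in the Example following Theorem \ref{feinberg:decom:thm}, which I would cite rather than reprove.

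For the complex-balanced specialization I would first invoke the classical Horn--Jackson theory: a complex balanced mass action linkage class is weakly reversible, and its positive equilibria coincide with its complex balanced equilibria and form the log-parametrized set $\{x \in \mathbb{R}^\mathscr{S}_{>0} \mid \log x - \log x^* \in S_i^\perp\}$. This establishes that each $(\mathscr{L}_i,K_i)$ is of PLP type (indeed bi-LP) with $P_{E,i}=S_i$, which supplies precisely the hypothesis ``each subnetwork of PLP type'' required downstream.

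The final and conceptually central step is to verify that all hypotheses of the Corollary hold, so that its conclusion $P_E=\sum \widetilde{S}_i$ can be specialized to $P_E=S$. Here I would use that mass action kinetics is factor span surjective (PL-FSK), so that $S_i=\widetilde{S}_i$ for every linkage class; combined with $P_{E,i}=S_i$ from Horn--Jackson this gives $P_{E,i}=\widetilde{S}_i$. With the linkage class decomposition independent and each subsystem of PLP type, the Corollary then yields that $(\mathscr{N},K)$ is a weakly reversible PLP system with $P_E=\sum \widetilde{S}_i=\sum S_i=S$, the last equality holding since $S=S_1+\cdots+S_k$ for any decomposition and independence makes this a direct sum.

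I expect the main obstacle to be conceptual rather than computational: it is the reverse implication of the biconditional in the \emph{general} (not complex-balanced) case, which genuinely requires Boros's construction and does \emph{not} follow from the PLP framework, since arbitrary mass action equilibria need not be log-parametrized. In the complex-balanced subcase this obstacle dissolves, because the coset-intersection argument of the preceding Theorem---using $\widetilde{S}_1^\perp + \widetilde{S}_2^\perp = (\widetilde{S}_1 \cap \widetilde{S}_2)^\perp = \mathbb{R}^\mathscr{S}$ under independence---reconstructs the global equilibrium directly, so the entire complex-balanced statement becomes a genuine corollary of the framework.
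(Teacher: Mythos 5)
Your proposal is correct and follows essentially the same route as the paper: the paper presents this statement as a citation of Boros's thesis for the hard existence direction and justifies the complex-balanced specialization exactly as you do --- Horn--Jackson gives each linkage class PLP (indeed bi-LP) type with $P_{E,i}=S_i$, factor span surjectivity of mass action gives $S_i=\widetilde{S}_i$, and the Corollary then yields $P_E=\sum \widetilde{S}_i = \sum S_i = S$. Your closing remark, that the reverse implication in the general (non-complex-balanced) case genuinely requires Boros's construction and cannot be extracted from the LP coset-intersection argument, matches the paper's treatment, which likewise invokes \cite{BORO2013} rather than reproving it.
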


\begin{example}
	Talabis et al. \cite{TAM2018} derived the following Theorem 4: 	
\end{example}

\begin{theorem}
	Let $(\mathscr{S},\mathscr{C},\mathscr{R},K)$ be PL-TIK.If for each linkage class subnetwork $\mathscr{L}_i$, $E_+(\mathscr{L}_i, K) \ne \varnothing$ then $E_+(\mathscr{N}, K) \ne \varnothing$.
\end{theorem}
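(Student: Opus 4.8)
The plan is to derive the statement by adapting, to the linkage class decomposition $\mathscr{N} = \mathscr{L}_1 \cup \ldots \cup \mathscr{L}_\ell$, the coset-intersection argument already used in the proof of the preceding Theorem, so as to manufacture a single positive equilibrium common to all linkage classes. Three ingredients must be assembled: (a) each linkage class $\mathscr{L}_i$ is again PL-TIK, so that the hypothesis $E_+(\mathscr{L}_i,K) \ne \varnothing$ upgrades to the full PLP structure with parametrizing subspace $\widetilde{S}_i$; (b) the $\widehat{T}$-rank maximality of the whole system forces the kinetic flux subspaces $\widetilde{S}_1,\ldots,\widetilde{S}_\ell$ to form a direct sum; and (c) the always-valid inclusion $\bigcap_i E_+(\mathscr{L}_i,K_i) \subseteq E_+(\mathscr{N},K)$ from the Feinberg Decomposition Theorem, which requires no independence of the $S_i$ and hence accounts for the fact that the PL-TIK hypotheses are weaker than those of the Corollary.

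First I would verify (a). Restricting the kinetic order matrix to the reactions of $\mathscr{L}_i$ preserves rank-maximality on that linkage class, so each $\mathscr{L}_i$ is itself PL-TIK; by the characterization of PL-TIK systems in \cite{TAM2018}, such a system possessing a positive equilibrium is automatically of PLP type with parametrizing subspace equal to its kinetic flux subspace. Thus $E_+(\mathscr{L}_i,K) \ne \varnothing$ yields $E_+(\mathscr{L}_i,K_i) = \{ x \in \mathbb{R}^\mathscr{S}_{>0} \mid \log x - \log x_i \in \widetilde{S}_i^{\,\perp} \}$ for some $x_i \in E_+(\mathscr{L}_i,K_i)$, exactly the PLP-type input invoked in the preceding Theorem. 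Next I would establish (b): since $\widehat{T}$-rank maximality amounts to vanishing kinetic complex deficiency, one obtains $\widetilde{s} = n - \ell$ globally and $\widetilde{s}_i = n_i - 1$ on each class, and as $\widetilde{S} = \sum_i \widetilde{S}_i$ with $\sum_i \widetilde{s}_i = \sum_i (n_i - 1) = n - \ell = \widetilde{s}$, the sum is direct.

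With (a) and (b) in hand I would mimic the $k=2$ computation of the preceding Theorem and induct on $\ell$. From $\widetilde{S}_1^{\,\perp} + \widetilde{S}_2^{\,\perp} = (\widetilde{S}_1 \cap \widetilde{S}_2)^{\,\perp} = \mathbb{R}^\mathscr{S}$ the cosets $\log x_1 + \widetilde{S}_1^{\,\perp}$ and $\log x_2 + \widetilde{S}_2^{\,\perp}$ meet in a coset of $(\widetilde{S}_1 + \widetilde{S}_2)^{\,\perp}$; the direct-sum structure makes each partial sum $\widetilde{S}_1 + \ldots + \widetilde{S}_j$ meet $\widetilde{S}_{j+1}$ only in $\{0\}$, so the intersection property propagates across all $\ell$ classes and produces a point $\widehat{x}$ lying in every coset. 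Setting $x^* = e^{\widehat{x}}$ places $x^*$ in each $E_+(\mathscr{L}_i,K_i)$, whence $x^* \in E_+(\mathscr{N},K)$ by (c), and therefore $E_+(\mathscr{N},K) \ne \varnothing$.

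The main obstacle is step (b): tying the abstract $\widehat{T}$-rank maximality condition to the concrete direct-sum decomposition $\widetilde{S} = \bigoplus_i \widetilde{S}_i$ of the linkage classes' kinetic flux subspaces. This needs the precise definition of $\widehat{T}$ from \cite{TAM2018} together with a dimension count showing that rank-maximality is equivalent to kinetic complex deficiency zero both globally and on each linkage class; once that equivalence is secured, the direct sum and the remaining steps are routine. A secondary point to confirm is that PL-TIK plus a per-linkage-class positive equilibrium genuinely licenses the PLP characterization used in (a), rather than only its deficiency-zero special case.
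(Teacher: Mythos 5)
Your overall machinery is the right one, and two of your three ingredients are sound: the inclusion $\bigcap_i E_+(\mathscr{L}_i,K_i) \subseteq E_+(\mathscr{N},K)$ from the Feinberg Decomposition Theorem indeed requires no independence, and $\widehat{T}$-rank maximality does force the kinetic flux subspaces of the linkage classes to sum directly (any relation $\sum_i v_i = 0$ with $v_i \in \widetilde{S}_i$, written as $v_i = \sum_{y} c_{i,y}F_y$ with $\sum_y c_{i,y}=0$ per class, lifts to a vanishing combination of the augmented columns $(F_y, e_{L(y)})$ of $\widehat{T}$, so all $c_{i,y}=0$; this is cleaner than routing through a claimed equivalence with kinetic complex deficiency zero, which is imprecise). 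You should also know that the paper does not prove this statement at all: it is imported verbatim as Theorem 4 of Talabis et al.\ \cite{TAM2018}, and the paper's accompanying discussion only asserts that the result is a special case of its Corollary \emph{when the whole system has zero deficiency}.

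The genuine gap is your step (a). PL-TIK together with $E_+(\mathscr{L}_i,K)\ne\varnothing$ does not license the PLP parametrization $E_+(\mathscr{L}_i,K_i)=\{x : \log x - \log x_i \in \widetilde{S}_i^{\perp}\}$; no such characterization exists in \cite{TAM2018} or in this paper. What $\widehat{T}$-rank maximality actually yields (affine independence of the kinetic complexes within a linkage class, hence kinetic deficiency zero there) is, via M\"uller--Regensburger \cite{MURE2014}, a statement about the \emph{complex balanced} equilibria: for a \emph{weakly reversible} linkage class, $Z_+(\mathscr{L}_i,K_i)\ne\varnothing$ and $Z_+$ is log-parametrized by $\widetilde{S}_i^{\perp}$ --- a CLP property, not PLP. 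Upgrading CLP to PLP requires every positive equilibrium to be complex balanced, and the only available tool for that is Feinberg's 1972 result, which needs \emph{stoichiometric} deficiency zero. This is exactly the point at which the paper retreats: its discussion invokes deficiency zero to get absolute complex balancing per linkage class (hence PLP with $P_{E,i}=\widetilde{S}_i$) and independence plus ZDD of the linkage class decomposition, and only then applies its Corollary. For a PL-TIK linkage class of positive deficiency, $E_+$ may strictly contain $Z_+$ and need not be a log-coset; worse, if a linkage class is not weakly reversible (the theorem's hypotheses do not exclude this), then $Z_+=\varnothing$ even though $E_+$ is assumed nonempty, so the CLP route collapses entirely. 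Consequently your argument, once repaired at (b), establishes only the weakly reversible, deficiency zero case --- essentially what the paper itself observes --- and not the full Talabis et al.\ theorem, whose proof in \cite{TAM2018} proceeds by different means.
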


If the system has zero deficiency, then the result is also a special case of the Corollary. The argument in this case is largely identical to that in the previous example, though the justifying results are different. Due to deficiency zero, the linkage class decomposition is both independent and ZDD. Also since $(\mathscr{L}_i, K_i)$ has zero deficiency, it follows from a 1972 result of Feinberg that each is absolutely complex balanced. Then it is also of PLP type with $P_E = \widetilde{S}$ according to a result of M\"uller and Regensburger. It follows from the definition of PL-TIK that it is factor span surjective on each linkage class.

\section{Summary, conclusions, and outlook}
\label{sec:sum}
We summarize our results and provide some insights for further research works.
\begin{enumerate}
	\item We provided conditions for existence of weakly reversible CF-decompositions of a chemical kinetic system with underlying reaction network in terms of decomposition of linkage classes.
	\item We developed an algorithm for determining the existence of a weakly reversible CF-decomposition of an NF system, which generalize the idea of decomposition with a single NF node. The results are valid for networks with finite number of NF nodes. In addition, we consider existence of weakly reversible decompositions that are independent.
	\item We established existence and parametrization of equilibria for a class of PL-NDK systems with weakly reversible PL-RDK decompositions. In particular, we extended the Deficiency Zero Theorem of Fortun et al. to PL-NDK systems not only those with zero deficiency but also those with positive deficiency.
	\item One interesting direction is to explore more on properties of NF systems with weakly reversible independent and/or incidence independent CF-decompositions.
	In a broader sense, the theory of decompositions of chemical reaction networks and chemical kinetic systems is a promising research field to study the set of positive equilibria of these structures.
\end{enumerate}

\section*{Acknowledgements}
Bryan S. Hernandez
acknowledges the University of the Philippines for
the PCI Bank Group Professorial Chair Award. This research was partially supported by the Institute for Basic Science in Daejeon, Republic of Korea with funding information IBS-R029-C3.



\appendix
\section{List of abbreviations}
\begin{tabular}{ll}
	\noalign{\smallskip}\hline\noalign{\smallskip}
	Abbreviation& Meaning \\
	\noalign{\smallskip}\hline\noalign{\smallskip}
	CF& complex factorizable\\
	CF-RM& complex factorization by reactant multiples\\
	CLP& complex balanced equilibria log parametrized\\
	CRN& chemical reaction network\\
	GMAS& generalized mass action systems\\
	MAK& mass action kinetics\\
	NF& non-complex factorizable\\
	PL-FSK& power law with factor span surjective kinetics\\
	PL-ISK& power law with interaction span surjective kinetics\\
	PLK& power law kinetics\\
	PL-NDK& power law with non-reactant-determined kinetics\\
	PLP& positive equilibria log-parametrized\\
	PL-RDK& power law with reactant-determined kinetics\\
	PL-RLK& power law with reactant set linear independent kinetics\\
	PL-TIK& power law with $\widehat{T}$-rank maximal kinetics\\
	RIDK& rate contant-interaction map decomposable kinetics\\
	SFRF& species formation rate function\\
	\noalign{\smallskip}\hline
\end{tabular}

\end{document}